\documentclass[a4paper,11pt]{article} 

\usepackage{fullpage}         
\usepackage[english]{babel}
\usepackage[utf8]{inputenc}
\usepackage[style=alphabetic, backend=bibtex8, natbib]{biblatex}
\usepackage{authblk}          
\usepackage{amssymb}          
\usepackage{mathtools}	      
\usepackage{amsthm}	          
\usepackage{amsmath}
\usepackage{upgreek}          
\usepackage{mathrsfs}
\usepackage{graphicx}         
\usepackage{float}            
\usepackage{enumitem}
\usepackage{xcolor}           
\usepackage{hyperref}
\usepackage[title]{appendix}

\hypersetup{
    linktocpage = true,
    colorlinks  = true,                        
    linkcolor   = blue,
    citecolor   = blue,
    urlcolor    = blue
}

\numberwithin{equation}{section}

\DeclareMathOperator*{\tr}{tr}

\theoremstyle{definition}
\newtheorem{definition}{Definition}[section]

\theoremstyle{remark}

\theoremstyle{plain}
\newtheorem{assumption}{Assumption}
\newtheorem{lemma}[definition]{Lemma}

\newtheorem{corollary}[definition]{Corollary}
\newtheorem{theorem}[definition]{Theorem}
\newtheorem{proposition}[definition]{Proposition}

\begin{document}
\title{Higher-order spectral perturbation expansions II: \\ Kernel matrices and manifold learning}
\renewcommand\Affilfont{\itshape\small}

\author[1]{Bernhard Stankewitz}
\affil[1]{Institute of Mathematics, University of Potsdam, \normalfont \texttt{bernhard.stankewitz@uni-potsdam.de}}
\author[2]{Martin Wahl}
\affil[2]{Department of Mathematics, Bielefeld University, \normalfont \texttt{martin.wahl@math.uni-bielefeld.de}}
\date{\today}
\maketitle
\abstract{
  We study spectral concentration bounds for kernel matrices as approximation of the corresponding kernel integral operator.
  Results are established under weak assumptions on the data setting and the reproducing kernel relying only on a Mercer condition and a local Weyl law.
  This allows us to deal with key features of kernel matrices, such as large multiplicities, large effective dimension, and heavy-tailed distributions.
  Our results apply to infinite dimensional principal component analysis, manifold learning, and Bayesian nonparametric statistics.
  We illustrate this via two prototypical examples: The heat kernel on the sphere and a wavelet prior from Bayesian nonparametrics.\smallskip\\
  \noindent \textbf{Keywords}: kernel matrix, principal component analysis, high-dimensionality, multiplicities, heat kernel, manifold learning, Bayesian nonparametrics.
  \smallskip\\
  \noindent \textbf{2020 Mathematics Subject Classification}: 62H25, 62R30, 47A55, 47B80

}

\section{Introduction}
\label{sec_Introduction}

Kernel matrices and their spectral properties play a fundamental role in statistics and machine learning.
For instance, spectral regularization algorithms \cite{SmaleZhou2007LearningTheoryEstimates,MR2600634} selectively remove noise-amplifying directions.
In non-parametric and high-dimensional regression \cite{MR2766864,MR4209491,MR5005628} as well as in Bayesian statistics \cite{MR2459226}, their eigenvalue decay governs convergence rates and characterizes high-dimensional phenomena. The fact that the eigenvectors of kernel matrices encode the underlying geometry of data is heavily used in classification and spectral clustering \cite{10.5555/345662,MR2396807}, dimension reduction \cite{6790375,MR1966051,MR2177937}, and manifold learning \cite{MR4452681}. 
Besides that, via the neural tangent kernel, kernel matrices also play a role in explaining the statistical behavior of (wide) neural networks \cite{MR4255118}. 

In many applications, spectral concentration bounds that relate kernel matrices and empirical covariance operators to their population counterparts are central tools of the analysis. Starting with the work by Giné and Koltchinskii \cite{MR1781185,MR1652327}, eigenvalues and eigenvectors of kernel matrices have been intensively studied in the literature. More recent work focuses on relative eigenvalue error bounds (see e.g.~\cite{MR2274441,pmlr-v99-ostrovskii19a,BarzilaiShamir}) and the predictive performance of the eigenvalues of kernel matrices. In the related direction of Laplacian Eigenmaps and Diffusion Maps, eigenvalues and eigenvectors of graph Laplacians as spectral approximation of Laplace-Beltrami operator have attracted significant recent interest \cite{MR4130541,MR4393800,MR4548608,MR4452681,TLV25,CD25}. 

This paper is the second step in an attempt to derive sharp concentration results for eigenvalues and eigenprojectors based on higher-order spectral perturbation expansions.
It replaces the idealized, prototypical setting of sub-Gaussian distributions and simple eigenvalues from \cite{Wahl2026PertubationExpansionsI} to more practical and realistic assumptions.

The first motivating example for this work is the kernel matrix problem in the context of manifold learning. 
Assume that we observe $n$ independent random variables uniformly distributed on the $d$-dimensional sphere (as a prototype of a compact submanifold in high dimensions) and construct the kernel matrix using the heat kernel on the sphere at time $t$. 
This kernel matrix approximates the heat semigroup at time $t$, that is the integral operator associated with the heat kernel. 
Under which conditions on the three involved parameters $t$, $d$ and $n$ do we have spectral convergence? 
The generator of the heat semigroup is the Laplace-Beltrami operator on the sphere. 
Similarly, we can take the difference quotient of the kernel matrix with small $t$. 
When are the associated eigenvalues and eigenvectors approximations of the eigenvalues and eigenfunctions of the Laplace-Beltrami operator on the sphere? 
These questions lie at the basis of non-linear dimensionality reduction methods, such as Laplacian Eigenmaps and Diffusion Maps. 
They are also difficult to answer for several reasons. 
First, there are large multiplicities for small eigenvalues. 
Second, there is a large effective dimension, especially for small $t$. 
Third, the Karhunen-Loève coefficients are usually neither independent nor uniformly bounded.

A second example stems from Bayesian nonparametric statistics.
In the context of Gaussian process (GP) regression, when the estimation target function is endowed with a GP prior, contraction rates of the posterior are characterized by the eigenvalue decay of the population kernel operator.
In modern setting with large sample sizes \( n \ge 10^6 \), however, computing the full posterior is computationally infeasible due to its cubic computational cost and statistical guarantees for approximations, e.g., via variational Bayes \cite{NiemanEtal2022ContractionRates, Nieman2023UncertaintyQuantification, NiemanSzabo2025AdaptiveVB}, require some spectral concentration guarantees for the kernel matrix of the prior.
These requirements become more severe when our guarantees should cover fully numerical approximations \cite{WangEtal2019ExactGPs, WengerEtal2022ComputationalUncertainty, GardnerEtal2018GPyTorch} that rely on conjugate-gradient solvers or Lanczos approximations of the singular value decomposition of the kernel matrix that remain implicit in the variational theory.
So far, here, guarantees are only available under a singular eigenvalue regime of the kernel operator together with uniform (moment) bound on the eigenfunctions \cite{StankewitzSzabo24}.
Motivated by this, we consider a concentration guarantees for a truncated wavelet prior, as it is standard in the Bayesian literature \cite{GhosalvdVaart2017FundamentalsOfBayes}, that defies of these restrictions having both very large eigenvalue multiplicities and no uniform bounds on its eigenfunctions.

The paper is organized as follows.
Section \ref{sec_SpectralConcentrationForKernelMatrices} introduces the data setting in detail and presents the main results for kernel matrices.
Section \ref{sec_Examples} applies the main results to our two motivating examples.
Section \ref{ssec_ProofsOfTheProbabilisticEstimates} gives the proof of the probabilistic estimates.
Sections \ref{sec_HigherOrderSpectralPerturbationBounds} and \ref{sec_ProofsOfThePerturbationBounds} develops the general higher-order spectral perturbations bounds that are at the core of our main results and of independent interest.
Section \ref{sec_AddtionalProofsOfAuxiliaryResults} collects additional proofs.


\section{Spectral concentration for kernel matrices}
\label{sec_SpectralConcentrationForKernelMatrices}

\subsection{Problem setup}
\label{ssec_ProblemSetup}

We consider the following scenario. Let \( K \in \mathbb{R}^{n \times n} \) be a normalized kernel matrix defined by
\begin{align}
  K_{i j}: = n^{-1} k( X_{i}, X_{j} ),
\end{align}
where $ X_1, \dots, X_n $ are i.i.d.~random variables and $ k: \mathcal{X} \times \mathcal{X} \rightarrow \mathbb{R}$ is a function.
In the sequel, we will work under the following assumptions.

\begin{assumption}[Data setting]
  \label{ass_data}
  \( X_{1}, \dots X_{n} \) are i.i.d.~copies of a random variable \( X \) taking values in a polish space $(\mathcal{X}, d)$ equipped with its Borel-$ \sigma $-algebra $ \mathscr{B}_{\mathcal{X}}$. We denote by $ \mathbb{P}^X$ the distribution of $X$.
\end{assumption}

\begin{assumption}[Kernel function]
  \label{ass_Kernel}
  \( k: \mathcal{X} \times \mathcal{X} \to \mathbb{R} \) is symmetric and positive semidefinite, meaning that \( k(x, y) = k(y, x) \) for all \( x, y \in \mathcal{X} \) and \( \sum_{i, j = 1}^{n} a_{i} a_{j} k(x_{i}, x_{j}) \ge 0 \) for all \( n \in \mathbb{N} \), all \( a_1,\dots,a_n \in \mathbb{R} \), and all \( x_1,\dots,x_n \in\mathcal{X} \).
\end{assumption}

\begin{assumption}[Mercer's condition]
  \label{ass_Mercer}
  $ k: \mathcal{X} \times \mathcal{X} \to \mathbb{R} $ is continuous and
  \begin{align}
  \label{Eq_Mercer}
    \int_{\mathcal{X}} k(x, x) \, \mathbb{P}^X(dx) < \infty.
  \end{align}
\end{assumption}

Under Assumption \ref{ass_Kernel}, the kernel matrix $K$ is symmetric and positive semi-definite. 
Hence, by the spectral theorem, there are eigenvalues $\widehat\lambda_1\geq \dots\geq \widehat\lambda_n$ and an associated orthonormal basis of eigenvectors $\widehat v_1,\dots,\widehat v_n $ in $\mathbb{R}^n$ with respect to the Euclidean norm on \(\mathbb{R}^n\) such that 
\begin{align}
    K\widehat v_j=\widehat\lambda_j\widehat v_j\qquad \text{for all } 1\leq j\leq n,
\end{align}
i.e.,  \( K \) has the spectral representation
\begin{align}
  K = \sum_{j = 1}^{n} \widehat{\lambda}_{j} \widehat{v}_{j} \widehat{v}_{j}^{\top}.
\end{align}

Under Assumption \ref{ass_Mercer}, then, the following holds. 
First, continuity of $k$ implies that $k$ is measurable with respect to the Borel $\sigma$-algebra $\mathscr{B}_{ \mathcal{X} \times \mathcal{X}} $ and the latter is equal to the product-$\sigma$-algebra $\mathscr{B}_X \otimes \mathscr{B}_{\mathcal{X}} $, see \cite[Proposition 4.1.7]{Dudley2002RealAnalysisAndProbability}.
Using also Equation \eqref{Eq_Mercer}, we arrive at $k \in L^2(\mathbb{P}^X \otimes \mathbb{P}^X)$, as can be seen from the inequality $ k(x,y)^2 \leq k(x,x)\cdot k(y,y) $, which follows from the positive semidefiniteness of \( k \), Fubini’s theorem, and \eqref{Eq_Mercer}.
Thus, we we can consider the kernel integral operator
\begin{align}
  L_k: L^{2}(\mathbb{P}^X) \to     L^{2}(\mathbb{P}^X), \qquad 
                         f \mapsto \int_{\mathcal{X}} k(\cdot, y) f(y) \, \mathbb{P}^X(dy).
\end{align}

Using also Assumption \ref{ass_Kernel}, $ L_k $ is and compact linear operator, see \cite[Theorem 4.2.16]{D07}.
It is self-adjoint and our assumptions are also sufficient for it to be positive, see Lemma \ref{lem_LKIsPositive}.
Hence, by the spectral theorem for compact self-adjoint operators, there is a (possibly finite) sequence of positive eigenvalues $\lambda_1 \ge \lambda_2 \ge \dots >0$ and a (in this case also finite) orthonormal system of eigenfunctions $ \phi_1,\phi_2, \dots $ in $L^2(\mathbb{P}^X)$ such that $(\phi_j)$ is an orthonormal basis of $(\operatorname{ker} L_k)^\perp$ satisfying
\begin{align}
  L_{k} \phi_j = \lambda_j \phi_j, \quad \text{for all } j \ge 1, 
  \qquad 
  \operatorname{tr}(L_{k}) = \sum_{j \ge 1} \lambda_j 
                           = \int_\mathcal{X} k(x,x) \, \mathbb{P}^X(dx) 
                           < \infty
\end{align}
and
\begin{align}
  k(x,y) 
  =
  \sum_{j \ge 1} \langle k(x, \cdot), \phi_{j} \rangle_{L^{2}} \phi_{j}(y)
  = 
  \sum_{j \ge 1} \lambda_j \phi_j(x) \phi_j(y)
\end{align}
for $ \mathbb{P}^X \otimes \mathbb{P}^X $-almost all $ x, y \in \mathcal{X} $, where the right-hand side convergences in $L^2(\mathbb{P}^X \otimes \mathbb{P}^X) $.
The latter latter series also converges absolutely and uniform on compact sets, which is the content of Mercer theorem.
\begin{lemma}[Mercer’s theorem]
  \label{lem_Mercer}
  Under the above Assumptions \ref{ass_data}, \ref{ass_Kernel}, and \ref{ass_Mercer}, the eigenfunctions \( \phi_{j} \) are continuous and we have that
  \begin{align}\label{eq_Mercer}
    k(x,y) = \sum_{j \ge 1} \lambda_j \phi_j(x) \phi_j(y)
  \end{align}
  for all $x,y\in\mathcal{X}$, where the right-hand side converges absolutely and uniformly on compacts.
\end{lemma}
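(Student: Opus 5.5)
The plan is the classical Mercer argument, made to work with a general Polish space and a possibly non-compact support. First I fix continuous versions of the eigenfunctions by defining, for every $x\in\mathcal{X}$,
\begin{align*}
  \phi_j(x) := \lambda_j^{-1}\int_{\mathcal{X}} k(x,y)\phi_j(y)\,\mathbb{P}^X(dy).
\end{align*}
Since $L_k\phi_j=\lambda_j\phi_j$, this agrees with $\phi_j$ in $L^2(\mathbb{P}^X)$. For continuity, take $x_m\to x$. By Cauchy--Schwarz,
\begin{align*}
  |\phi_j(x_m)-\phi_j(x)|\le \lambda_j^{-1}\|k(x_m,\cdot)-k(x,\cdot)\|_{L^2(\mathbb{P}^X)}.
\end{align*}
Expanding the square gives $k(x_m,y)^2$, $k(x_m,y)k(x,y)$ and $k(x,y)^2$. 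Pointwise convergence follows from continuity of $k$. For domination I would like a bound such as $|k(x_m,y)|\le \sqrt{k(x_m,x_m)k(y,y)}$, which is $\le C\sqrt{k(y,y)}$ since $\sup_m k(x_m,x_m)<\infty$; then $C^2k(y,y)$ is integrable by \eqref{Eq_Mercer}, and dominated convergence gives continuity. The same bound shows that the defining integral is finite.

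Next I show that for every $N$ the truncated kernel
\begin{align*}
  k_N(x,y):=k(x,y)-\sum_{j\le N}\lambda_j\phi_j(x)\phi_j(y)
\end{align*}
is positive semidefinite at every pair of points. It is continuous and is the kernel of the positive operator $L_k-\sum_{j\le N}\lambda_j\phi_j\otimes\phi_j$ (Lemma \ref{lem_LKIsPositive} together with the spectral decomposition). A continuous kernel inducing a positive integral operator satisfies $\sum_{i,l} a_ia_lk_N(x_i,x_l)\ge 0$ for all points $x_i$ in $\operatorname{supp}\mathbb{P}^X$. To see this, test the operator against $f=\sum_i a_i\,\mathbb{1}_{B_\varepsilon(x_i)}/\mathbb{P}^X(B_\varepsilon(x_i))$ and let $\varepsilon\to0$. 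The balls have positive mass because the $x_i$ lie in the support, and continuity passes the limit. In particular $k_N(x,x)\ge0$, i.e.
\begin{align*}
  \sum_{j\ge1}\lambda_j\phi_j(x)^2\le k(x,x).
\end{align*}
Cauchy--Schwarz on the tails then gives absolute convergence of $\sum_j\lambda_j\phi_j(x)\phi_j(y)$, uniformly in $x$ for each fixed $y$, so the limit $\tilde k(\cdot,y)$ is continuous.

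To identify the limit, note that for fixed $y$ the function $k(\cdot,y)$ satisfies $\int k(\cdot,y) f\,d\mathbb{P}^X=(L_kf)(y)$. Its $L^2$-expansion in the $\phi_j$ therefore has coefficients $\lambda_j\phi_j(y)$ by the definition above, and the component in $\ker L_k$ vanishes. Hence $k(\cdot,y)=\tilde k(\cdot,y)$ $\mathbb{P}^X$-a.e., and by continuity of both sides they agree on $\operatorname{supp}\mathbb{P}^X$. On the diagonal, $x\mapsto\sum_{j\le N}\lambda_j\phi_j(x)^2$ is a monotone sequence of continuous functions converging to the continuous $k(x,x)$. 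Dini's theorem gives uniform convergence on compacts, and Cauchy--Schwarz on the tails,
\begin{align*}
  \sum_{j>N}|\lambda_j\phi_j(x)\phi_j(y)|\le\Big(\sum_{j>N}\lambda_j\phi_j(x)^2\Big)^{1/2}\Big(\sum_{j>N}\lambda_j\phi_j(y)^2\Big)^{1/2},
\end{align*}
transfers this to uniform absolute convergence on $C\times C$ for compact $C$.

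The main obstacle is points outside $\operatorname{supp}\mathbb{P}^X$. There the approximation argument for positive semidefiniteness fails, and equality in \eqref{eq_Mercer} need not hold for an arbitrary continuous kernel. My first attempt would be to run the approximation argument for general $x$ through the RKHS of $k$. Here $k(x,\cdot)-\sum_j\lambda_j\phi_j(x)\phi_j$ is the projection of $k(x,\cdot)$ onto the orthogonal complement, in the RKHS, of the closure of the span of the $\lambda_j^{1/2}\phi_j$. The claim then reduces to showing that this complement is trivial, i.e. that RKHS functions vanishing $\mathbb{P}^X$-a.e. vanish identically. If that cannot be derived from Assumptions \ref{ass_data}--\ref{ass_Mercer} alone, I would state \eqref{eq_Mercer} for $x,y\in\operatorname{supp}\mathbb{P}^X$, or add the standing assumption that $\mathbb{P}^X$ has full support, which holds in both examples of Section \ref{sec_Examples}.
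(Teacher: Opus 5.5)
Your argument is the classical Mercer proof that the paper only points to (\cite[Theorem 4.23]{B20}, together with the remark that it carries over to finite $\mathbb{P}^X$), and it is correct on $\operatorname{supp}\mathbb{P}^X$. The point you leave open is not a gap in your proof but a defect of the statement. Under Assumptions \ref{ass_data}--\ref{ass_Mercer} alone, \eqref{eq_Mercer} can fail off the support.

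For a counterexample, take $\mathcal{X}=[0,2]$, $\mathbb{P}^X$ uniform on $[0,1]$, and $k(x,y)=1+(x-1)_+(y-1)_+$. This kernel is continuous, symmetric and positive semidefinite, with $\int k(x,x)\,\mathbb{P}^X(dx)=1$. Then $L_kf$ is the constant $\int_0^1 f(y)\,dy$, so the only eigenvalue is $\lambda_1=1$ with $\phi_1=\pm1$. Every continuous version of $\phi_1$ therefore equals $\pm1$ on $[0,1]$. Evaluating \eqref{eq_Mercer} at $(0,y)$ would force $|\phi_1(y)|=1$ for all $y$, whereas at $(2,2)$ it would require $\phi_1(2)^2=k(2,2)=2$. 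So no choice of continuous eigenfunctions rescues the statement.

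This is exactly the obstruction you isolated: $h(x)=(x-1)_+$ is a nonzero element of the RKHS of $k$ that vanishes $\mathbb{P}^X$-a.e. Your fallback is therefore the correct formulation. \eqref{eq_Mercer} holds for $x,y\in\operatorname{supp}\mathbb{P}^X$, and on all of $\mathcal{X}$ when $\mathbb{P}^X$ has full support. Full support holds in both examples of Section \ref{sec_Examples} and in the Lebesgue setting of the cited reference, which is why the issue does not arise there.

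Beyond this, your write-up supplies what the paper's transfer remark glosses over. Since $k$ need not be bounded, finiteness and continuity of $x\mapsto\lambda_j^{-1}\int k(x,y)\phi_j(y)\,\mathbb{P}^X(dy)$ require your domination $|k(x,y)|\le\sqrt{k(x,x)k(y,y)}$ together with \eqref{Eq_Mercer}. Likewise, uniformity is only available on compacts, via Dini.

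Two small fixes are needed.
\begin{itemize}
\item The tail bound carries the factor $k(x,x)^{1/2}$. So the convergence of $\sum_j\lambda_j\phi_j(x)\phi_j(y)$ is locally uniform in $x$, not uniform; this still suffices for continuity.
\item Lemma \ref{lem_LKIsPositive} is proved in the paper through the space $\mathcal{H}$ of \eqref{eq:RKHS}, whose reproducing property is itself derived from Lemma \ref{lem_Mercer}. To avoid circularity, obtain positivity of $L_k$ directly. For example, for bounded $f$ use $0\le\mathbb{E}\,n^{-2}\sum_{i,l}f(X_i)k(X_i,X_l)f(X_l)=\frac{n-1}{n}\langle f,L_kf\rangle_{L^2}+\frac{1}{n}\mathbb{E}f(X)^2k(X,X)$ and let $n\to\infty$.
\end{itemize}
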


\noindent For a proof in the special case of $\mathcal{X}$ being a bounded domain of $\mathbb{R}^p$ equipped with the Lebesgue measure see \cite[Theorem 4.23]{B20}. Note that the same argument also works under our more general setup replacing the bounded domain assumption with the fact that \( \mathbb{P}^{X} \) is finite. For an alternative proof in the special case of self-adjoint semigroups under a trace class hypotheses see \cite[Theorem 7.2.5]{D07}. The latter setting is considered in the heat kernel example in Section \ref{ssec_TheSphere}.

In addition to Assumptions \ref{ass_data}--\ref{ass_Mercer}, we assume that the eigenfunctions of the kernel operator satisfy a local Weyl law, i.e., that their squares are bounded on average in each block. Importantly, we account for potential multiplicities of the population eigenvalues \( (\lambda_{j})_{ j \ge 1} \).
Hence, let \( (\mu_{r})_{r \ge 1} \) denote the sequence of distinct eigenvalues of \( \Sigma \), \( I_{r} := \{ j \ge 1: \lambda_{j} = \mu_{r} \} \) and \( m_{r} = | I_{r} | \) the multiplicity of \( \mu_{r} \).  

\begin{assumption}[Local Weyl law]
  \label{ass_LocalWeylLaw} 
  There exists a constant \( L \ge 1 \) such that the  orthonormal basis \( ( \phi_{j} )_{j \ge 1} \) satisfies that almost surely
  \begin{align}
    \sum_{j\in I_r}\phi_j^2(x)\leq L m_r
    \qquad \text{ for all } r \geq 1\text{ and all } x\in\mathcal{X}.
  \end{align}
\end{assumption}

\noindent Our theory is also able to cover weaker assumptions that are frequently satisfied by the \( (\phi_{j})_{j \ge 1} \), such as a bound of the form
\begin{align}
      \sum_{j =1}^{m} \phi_{j}^{2}(x) \le L m^{1+\gamma}\qquad \text{ for all } J \geq 1\text{ and all } x\in\mathcal{X}
\end{align}
for some \( \gamma \geq 0 \).
This is true, for example, for the important class of Matèrn kernels, see \cite{Rosa2025L2ContractionRates} and the case \( \tau = 1 \) was already partially explored by \cite{Wahl2024LaplacianEigenmaps} in the context of manifolds learning. 
Such bounds also feature prominently in other nonparametric problems, see \cite{Massart2007ModelSelection, BarronEtal1999ModelSelection}. 
The stronger Assumption~\ref{ass_LocalWeylLaw}, however, leads to the cleanest probabilistic estimates, so that we do not treat weaker conditions in this paper.

A fundamental problem is to relate the spectral characteristics of \( K \) to those of the integral kernel operator $ L_k$. 
In many of the applications mentioned in the beginning of the introduction, key features of $L_k$ are spectral decay, large multiplicities, large traces, and eigenfunctions that are not uniformly bounded. 
This provides an interesting random operator setting from high-dimensional statistics and machine learning, which is not yet as well understood as the spiked covariance model or covariance matrices under the Marchenko--Pastur limit.


\subsection{The kernel trick}
\label{ssec_TheKernelTrick}

The eigenvalue and eigenvector problem for the kernel matrix can be reformulated in terms of an empirical covariance operator. First, introduce the Hilbert space
\begin{align}\label{eq:RKHS}
    \mathcal{H}
  = 
    \Big\{ u=\sum_{j \geq 1}\beta_{j} \phi_{j}: 
           \sum_{j \geq  1} \frac{ \beta_{j}^{2} }{ \lambda_{j} } 
         < \infty
    \Big\}
  \quad \text{ with } \quad 
  \langle u, v \rangle_{ \mathcal{H} } 
  = 
  \sum_{j \geq 1}
  \frac{ \langle u, \phi_{j} \rangle_{ L^{2} } 
         \langle v, \phi_{j} \rangle_{ L^{2} }
       }
       { \lambda_{j} }.
\end{align}
Then $\mathcal{H}$ is a separable Hilbert space with orthonormal basis $(u_j)_{j\geq 1}$ given by $u_j=\lambda_j^{1/2}\phi_j$ for all $j\geq 1.$ 
Moreover, using Lemma \ref{lem_Mercer}, it is easy to check that $ k(x, \cdot) \in \mathcal{H} $ for all $ x \in \mathcal{X} $ and that the reproducing property $ h(x)=\langle h, k(x,\cdot) \rangle_{\mathcal{H}} $ holds for all $ h \in \mathcal{H} $ and all $ x \in \mathcal{X} $.
Consequently, \( \mathcal{H} \) is the unique reproducing kernel Hilbert space induced by \( k \), see also \cite{SteinwartChristmann2008SVMs, Wainwright2019HDStats}.
Moreover, the continuity of \( k \) and the reproducing property imply that the map $ x \mapsto k(x,\cdot)$  from $(\mathcal{X},d)$ to $(\mathcal{H},\|\cdot\|_\mathcal{H})$ is continuous such that $ k(X, \cdot) $ is a random variable taking values in $ \mathcal{H} $.
Since 
\[
  \mathbb{E}\|k(X,\cdot)\|_{\mathcal{H}_k}^2
  =
  \int k(x, x) \mathbb{P}^{X}(d x) < \infty,
\]
it is strongly square-integrable and we can introduce the covariance operator
\begin{align}
  \Sigma = \mathbb{E}(k(X,\cdot) \otimes k(X,\cdot)),
\end{align}
which is a positive, self-adjoint trace class operator (ee e.g.~\cite[Theorem 7.2.5]{HE15}). The following close relationship of the covariance operator $\Sigma$ and integral operator $L_k$ is well-known (see e.g.~\cite[Theorem 4.1]{MR2177937}).

\begin{lemma}
  \label{lem_L_Sigma}
  $\Sigma$ is the restriction of $L_k$ to $\mathcal{H}$. In particular, the spectral decomposition of $\Sigma$ is given by
  \begin{align}
    \Sigma=\sum_{j\geq 1}\lambda_j u_j\otimes u_j,
  \end{align}
  where $\lambda_1\geq \lambda_2\geq \dots>0$ are the non-zero eigenvalues of $L$ and $u_j=\lambda_j^{1/2}\phi_j$ for all $j\geq 1$.
\end{lemma}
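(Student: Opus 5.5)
The plan is to compute $\Sigma h$ directly for $h \in \mathcal{H}$ using the reproducing property, and then identify the result with $L_k h$. By definition of the tensor product, $(k(x,\cdot)\otimes k(x,\cdot))h = \langle h, k(x,\cdot)\rangle_{\mathcal{H}}\, k(x,\cdot) = h(x)\, k(x,\cdot)$. Hence
\begin{align}
  \Sigma h = \mathbb{E}\big[h(X)\, k(X,\cdot)\big],
\end{align}
where the expectation is a Bochner integral in $\mathcal{H}$. It is well defined because $\mathbb{E}\|h(X)k(X,\cdot)\|_{\mathcal{H}} \le \|h\|_{\mathcal{H}}\, \mathbb{E}\|k(X,\cdot)\|_{\mathcal{H}}^2 < \infty$, using $|h(x)| \le \|h\|_{\mathcal{H}} \|k(x,\cdot)\|_{\mathcal{H}}$.

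The key step is to evaluate this Bochner integral pointwise. For every $y\in\mathcal{X}$, the evaluation functional $g\mapsto g(y)=\langle g,k(y,\cdot)\rangle_{\mathcal{H}}$ is continuous on $\mathcal{H}$, so it commutes with the Bochner integral. This gives
\begin{align}
  (\Sigma h)(y) = \mathbb{E}[h(X)k(X,y)] = \int_{\mathcal{X}} k(y,x)h(x)\,\mathbb{P}^X(dx) = (L_k h)(y).
\end{align}
The identity holds for every $y$, so in particular in $L^2(\mathbb{P}^X)$; here $h\in\mathcal{H}\subset L^2(\mathbb{P}^X)$ is identified with its continuous representative. Thus $\Sigma = L_k|_{\mathcal{H}}$. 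One point needs attention: $\mathcal{H}$ was defined via $L^2$-expansions, whereas the reproducing property concerns pointwise values. The two are reconciled by Lemma \ref{lem_Mercer}, which makes each $\phi_j$ continuous, so the pointwise series $\sum_j\beta_j\phi_j(x)$ converges absolutely by Cauchy--Schwarz against $\sum_j\lambda_j\phi_j(x)^2=k(x,x)$. This is the representative implicitly used in the reproducing property quoted before the statement.

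For the spectral decomposition, apply the first part to $u_j=\lambda_j^{1/2}\phi_j\in\mathcal{H}$: $\Sigma u_j = L_k u_j = \lambda_j u_j$. Since $(u_j)_{j\ge1}$ is an orthonormal basis of $\mathcal{H}$, as noted after \eqref{eq:RKHS}, and $\Sigma$ is bounded and self-adjoint on $\mathcal{H}$, expanding $h=\sum_j\langle h,u_j\rangle_{\mathcal{H}}u_j$ and using continuity of $\Sigma$ yields $\Sigma h=\sum_j\lambda_j\langle h,u_j\rangle_{\mathcal{H}}u_j$, i.e.\ $\Sigma=\sum_{j\ge1}\lambda_j u_j\otimes u_j$. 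The $\lambda_j$ are exactly the nonzero eigenvalues of $L_k$ by construction of the $\phi_j$.

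The main obstacle I expect is the measure-theoretic bookkeeping: justifying Bochner integrability and the interchange of point evaluation with the expectation. Both follow from the continuity of $x\mapsto k(x,\cdot)$ (hence strong measurability) and the Mercer bound \eqref{Eq_Mercer}. A minor secondary issue is consistently choosing continuous representatives, which Lemma \ref{lem_Mercer} settles.
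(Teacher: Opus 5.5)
Your proposal is correct and follows essentially the same route as the paper: use the reproducing property to write $\Sigma f=\mathbb{E}\big(k(X,\cdot)f(X)\big)=L_k f$, then read off the spectral decomposition from the orthonormal basis $u_j=\lambda_j^{1/2}\phi_j$. Your extra justifications are the same argument made rigorous, not a different one. These are the Bochner integrability via $k(X,X)$, commuting point evaluation with the expectation, and fixing the pointwise series representative via Lemma~\ref{lem_Mercer}; the paper leaves all three implicit.
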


\begin{proof}
  For the first statement note that for $f\in\mathcal{H}$ we have
  \begin{align}
      \Sigma f 
    = \mathbb{E} (k(X,\cdot) \langle f, k(X, \cdot) \rangle_{\mathcal{H}}) 
    = \mathbb{E} (k(X,\cdot)f(X))
    = L f,
  \end{align}
  where we used the reproducing property in the second equality.
  The second statement is a consequence of the first one using the definition of $\mathcal{H}$.
\end{proof}

We summarize the above results in the following lemma.

\begin{lemma}
\label{lem:KL}
  Assume that Assumptions \ref{ass_data}, \ref{ass_Kernel}, and \ref{ass_Mercer} hold. Then the $\mathcal{H}$-valued random variable $k(X,\cdot)$ is strongly square-integrable with (unnormalized) covariance operator $\Sigma$. Moreover, defining the Karhunen-Loève coefficients 
  \begin{align}
        \eta_j=\frac{\langle k(X,\cdot),u_j\rangle_{\mathcal{H}}}{\sqrt{\lambda_j}}=\phi_j(X),\qquad j\geq 1,
    \end{align}
    we have the series representation
  \begin{align}
        k(X,\cdot)=\sum_{j\geq1}\sqrt{\lambda_j}\eta_ju_j\qquad \mathbb{P}^X\text{-a.s.},
    \end{align}
    with convergence with respect to the norm $\sqrt{\mathbb{E}\|\cdot\|^2_\mathcal{H}}$.
\end{lemma}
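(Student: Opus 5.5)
The plan is to assemble the statement from the facts collected in Section~\ref{ssec_TheKernelTrick}, so most steps are bookkeeping. First I recall that $x\mapsto k(x,\cdot)$ is continuous from $(\mathcal{X},d)$ into $(\mathcal{H},\|\cdot\|_{\mathcal{H}})$. Hence $k(X,\cdot)$ is a Borel measurable $\mathcal{H}$-valued random variable, and since $\mathcal{H}$ is separable it is strongly measurable. By the reproducing property, $\|k(x,\cdot)\|_{\mathcal{H}}^2=\langle k(x,\cdot),k(x,\cdot)\rangle_{\mathcal{H}}=k(x,x)$. Assumption~\ref{ass_Mercer} then gives $\mathbb{E}\|k(X,\cdot)\|_{\mathcal{H}}^2<\infty$, which is strong square-integrability. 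The covariance operator $\mathbb{E}(k(X,\cdot)\otimes k(X,\cdot))$ is $\Sigma$ by definition, and its spectral form is exactly Lemma~\ref{lem_L_Sigma}.

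Next I identify the Karhunen--Lo\`eve coefficients. Since $u_j=\lambda_j^{1/2}\phi_j\in\mathcal{H}$, the reproducing property gives $\langle k(X,\cdot),u_j\rangle_{\mathcal{H}}=u_j(X)=\sqrt{\lambda_j}\phi_j(X)$, hence $\eta_j=\phi_j(X)$. Here Lemma~\ref{lem_Mercer} is used so that $\phi_j$ is a genuine continuous function and $\phi_j(X)$ is well defined pointwise, not only up to null sets.

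For the series representation, I expand in the orthonormal basis $(u_j)$. Because $k(X,\cdot)\in\mathcal{H}$ pointwise, Parseval gives $k(X,\cdot)=\sum_j\langle k(X,\cdot),u_j\rangle_{\mathcal{H}}u_j=\sum_j\sqrt{\lambda_j}\eta_ju_j$ in $\mathcal{H}$-norm, for every realization and so $\mathbb{P}^X$-a.s. For the claimed convergence in $\sqrt{\mathbb{E}\|\cdot\|_{\mathcal{H}}^2}$, I compute the remainder
\begin{align}
\mathbb{E}\Big\|k(X,\cdot)-\sum_{j\le J}\sqrt{\lambda_j}\eta_ju_j\Big\|_{\mathcal{H}}^2=\mathbb{E}\sum_{j>J}\lambda_j\phi_j(X)^2=\sum_{j>J}\lambda_j,
\end{align}
using Tonelli and $\mathbb{E}\phi_j(X)^2=1$. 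This tends to zero because $\sum_j\lambda_j=\operatorname{tr}(L_k)<\infty$. Alternatively, dominated convergence applies with the dominating function $k(X,X)$.

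The main obstacle is justifying that $\langle k(x,\cdot),u_j\rangle_{\mathcal{H}}=u_j(x)$ holds for every $x$ and not merely almost everywhere. I would handle this by reading $\mathcal{H}$ as a space of genuine functions, using the pointwise Mercer expansion~\eqref{eq_Mercer}. For each $x$ this expansion gives $k(x,\cdot)=\sum_j\lambda_j\phi_j(x)\phi_j$ with $\sum_j\lambda_j\phi_j(x)^2=k(x,x)<\infty$, so that $\langle k(x,\cdot),\phi_j\rangle_{L^2}=\lambda_j\phi_j(x)$. The $\mathcal{H}$ inner product definition then yields the identity directly.
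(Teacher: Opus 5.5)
Your proposal is correct and follows essentially the same route as the paper, which gives no separate proof. The paper states this lemma as a summary of the preceding discussion in Section~\ref{ssec_TheKernelTrick}: continuity of $x\mapsto k(x,\cdot)$, the identity $\mathbb{E}\|k(X,\cdot)\|_{\mathcal{H}}^2=\int k(x,x)\,\mathbb{P}^X(dx)<\infty$, the reproducing property, and the orthonormal basis $(u_j)$. Your pointwise verification via the Mercer expansion and your tail computation $\mathbb{E}\sum_{j>J}\lambda_j\phi_j(X)^2=\sum_{j>J}\lambda_j\to 0$ fill in details the paper leaves implicit.
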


In the data setting, we can introduce the embedded random variables 
\begin{align}
k(X_1, \cdot), \dots, k(X_n, \cdot)    
\end{align} 
taking values in $ \mathcal{H} $, and the empirical covariance operator 
\begin{align}
  \widehat\Sigma=\frac{1}{n}\sum_{i=1}^nk(X_i,\cdot)\otimes k(X_i,\cdot),
\end{align}
which is a positive, self-adjoint linear operator on \( \mathcal{H} \) of rank at most $ n $.
Similar as above the empirical covariance operator $\widehat\Sigma$ has the same non-zero eigenvalues as $K$ and the eigenvectors can be easily translated using the sampling operator $ S_{n}: \mathcal{H} \to \mathbb{R}^n $ given by $ S_nh = (h(X_1), \dots, h(X_n))^\top$.

\begin{lemma}
\label{lem_L_Sigma_empirical}
  The spectral decomposition of $ \widehat{\Sigma} $ is given by
  \[
    \widehat\Sigma=\sum_{j=1}^n\widehat\lambda_j \widehat u_j\otimes \widehat u_j
  \]
  with $ \widehat \lambda_1 \ge \dots \ge \widehat \lambda_n $ being the eigenvalues of $K$ and an orthonormal system $ \widehat u_1, \dots, \widehat u_n $ in $ \mathcal{H} $ satisfying $(\widehat u_j(X_1), \dots, \widehat u_j(X_n))^\top = (n \widehat{\lambda}_j)^{1/2} \widehat v_j $ for all $ j = 1, \dots, n $ such that $ \widehat \lambda_j >0 $.
\end{lemma}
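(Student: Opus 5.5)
The plan is to factor $\widehat\Sigma$ through the sampling operator and transfer the spectral decomposition of $K$ by the usual SVD argument. First I would compute the adjoint of $S_n$ with respect to $\langle\cdot,\cdot\rangle_{\mathcal{H}}$ and the Euclidean inner product on $\mathbb{R}^n$. By the reproducing property, $\langle S_n h, a\rangle_{\mathbb{R}^n} = \sum_i a_i h(X_i) = \langle h, \sum_i a_i k(X_i,\cdot)\rangle_{\mathcal{H}}$, so $S_n^* a = \sum_{i=1}^n a_i k(X_i,\cdot)$. Since $(k(X_i,\cdot)\otimes k(X_i,\cdot))h = \langle h, k(X_i,\cdot)\rangle_{\mathcal{H}} k(X_i,\cdot) = h(X_i)k(X_i,\cdot)$, this gives
\begin{align}
  \widehat\Sigma = \tfrac1n S_n^* S_n, \qquad \tfrac1n S_n S_n^* = K,
\end{align}
where the second identity follows from $(S_nS_n^*a)_j = \sum_i a_i k(X_i,X_j)$.

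Next, for each $j$ with $\widehat\lambda_j>0$, I would define $\widehat u_j := (n\widehat\lambda_j)^{-1/2} S_n^*\widehat v_j$. Then
\begin{align}
  \langle \widehat u_j,\widehat u_l\rangle_{\mathcal{H}} = (n\widehat\lambda_j n\widehat\lambda_l)^{-1/2}\langle \widehat v_j, S_nS_n^*\widehat v_l\rangle = (\widehat\lambda_l/\widehat\lambda_j)^{1/2}\delta_{jl}=\delta_{jl},
\end{align}
so these vectors are orthonormal. They are also eigenvectors:
\begin{align}
  \widehat\Sigma\widehat u_j = \tfrac1n S_n^*(n\widehat\lambda_j)^{-1/2}S_nS_n^*\widehat v_j = \widehat\lambda_j \widehat u_j .
\end{align}
The claimed sampling identity follows directly, since $S_n\widehat u_j = (n\widehat\lambda_j)^{-1/2} nK\widehat v_j = (n\widehat\lambda_j)^{1/2}\widehat v_j$. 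For indices with $\widehat\lambda_j=0$, I would extend to an orthonormal system by choosing any orthonormal vectors in $\mathcal{H}$ orthogonal to the previously constructed $\widehat u_j$. If $\dim\mathcal{H}$ is finite and smaller than $n$, these indices need only be listed formally, since they carry zero weight in the decomposition.

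It remains to show $\widehat\Sigma=\sum_j\widehat\lambda_j\widehat u_j\otimes\widehat u_j$. The range of $\widehat\Sigma$ is contained in $\operatorname{ran}S_n^* = S_n^*(\operatorname{ran} S_nS_n^*)$, because $\mathbb{R}^n=\operatorname{ran}(S_nS_n^*)\oplus\ker(S_nS_n^*)$ and $\ker(S_nS_n^*)=\ker S_n^*$. Hence $\operatorname{ran}S_n^*$ is spanned by $\{S_n^*\widehat v_j:\widehat\lambda_j>0\}$, that is, by the $\widehat u_j$ with positive eigenvalues. On the orthogonal complement of this span, $\widehat\Sigma$ vanishes by self-adjointness. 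Both sides therefore agree on the span and on its complement, which proves the decomposition.

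The only mildly delicate step is this range/kernel bookkeeping, namely $\ker S_n^*=\ker S_nS_n^*$. It follows from $\|S_n^*a\|_{\mathcal{H}}^2=\langle a,S_nS_n^*a\rangle$. The remaining steps are routine.
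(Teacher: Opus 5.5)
Your proof is correct and follows the route the paper has in mind. The paper gives no separate proof, only the remark that the eigenvectors are translated via the sampling operator $S_n$, and it later uses exactly the factorization $\widehat\Sigma = n^{-1}S_n^*S_n$ with $S_n^*w=\sum_{i=1}^n w_i k(X_i,\cdot)$ in the proof of Proposition~\ref{prp_DeterministicBounds}. Your caveat about $\dim\mathcal{H}<n$ is also well taken: it actually occurs in the truncated wavelet example, where the full family $\widehat u_1,\dots,\widehat u_n$ cannot be orthonormal and only the vectors with $\widehat\lambda_j>0$ carry content.
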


It follows from Lemmas \ref{lem_L_Sigma} and \ref{lem_L_Sigma_empirical} that the eigenvalue and eigenvector problem for $K$ can be translated to the eigenvalue and eigenvector problem of the (empirical) covariance operators $\widehat\Sigma$ and  $\Sigma$.  This fact is often called the kernel trick, as one only has to compute the inner products $k(X_i,X_j)$, while the explicit coordinates of the embedded random variables \( k(X_1, \cdot), \dots, k(X_n, \cdot) \) in the typically infinite-dimensional reproducing kernel Hilbert space $\mathcal{H}$ are not required.


\subsection{Main result}
\label{ssec_MainResults}

In this section, we formulate our main result in the kernel matrix setting. The behavior of our bounds, which will be formulated for the \( r \)-th eigenblock, is governed by the relative rank.

\begin{definition}[Relative rank]
    For $r\geq 1 $ the relative rank is defined by
    \begin{align}
  \mathbf{r}_{r}(\Sigma): = \sum_{s \ne r} \frac{m_{s} \mu_{s}}{| \mu_{s} - \mu_{r} |} 
                          + \frac{m_{r} \mu_{r}}{g_{r}},
\end{align}
where \( g_{r}: = \min(\mu_{r - 1} - \mu_{r}, \mu_{r} - \mu_{r + 1}) \) is the \( r \)-th spectral gap.
\end{definition}

The following result analyzes the eigenvalues $(\widehat{\lambda}_j)_{j\in I_r}$ and eigenvectors $(\widehat v_j)_{j \in I_r}$ of the kernel matrix $K$ within a block $I_r$. It compares these to the eigenvalue $\mu_r$ and the eigenfunctions $(\phi_j)_{j\in I_r}$ of the integral kernel operator $L_k$, where the latter are evaluated at the observations and considered up to an orthogonal transformation.

\begin{theorem}[Bounds for eigenvalues and eigenspaces]
  \label{thm_BoundsForEigenvaluesAndEigenspaces}
  Assume that Assumptions \ref{ass_data}--\ref{ass_LocalWeylLaw} hold and let $ \gamma \ge 1 $.
  Then there are constants $c, C>0$ depending only $ \gamma $ and $ L $ such that the following holds.
  Let $r \ge 1$ be such that 
  \begin{align}
    \frac{\mu_{r}}{g_{r}} \mathbf{r}_r(\Sigma) \leq c\frac{n}{\log n}.
  \end{align}
  Then the eigenvalues and eigenvectors of the kernel matrix satisfy 
  \begin{align}\label{eq:eva:bound:KS}
    \frac{1}{\sqrt{m_r}} \Big\| \Big(\frac{\widehat{\lambda}_j-\mu_r}{\mu_r}\Big)_{j \in I_r} \Big\|_2
    \le
    C \Big( \sqrt{\frac{m_r \log n}{n}} + \mathbf{r}_{r}(\Sigma) \frac{\log n}{n} \Big)
  \end{align}
  and
  \begin{align}\label{eq:ev:bound:KS}
    & \ \ \ \
    \inf_{O \in O(m_r)} 
    \frac{1}{ \sqrt{m_r} } \Bigg\| (\widehat v_j)_{j \in I_r}O - \frac{1}{\sqrt{n}}
                                                                \begin{pmatrix}
                                                                  \phi_j(X_1) \\
                                                                  \vdots      \\
                                                                  \phi_j(X_n) 
                                                                \end{pmatrix}_{j \in I_r} 
                           \Bigg\|_2
    \\
    &\le 
    C \Big( \sqrt{ \sum_{s \ne r} \frac{m_s \mu_r \mu_s}{(\mu_r - \mu_s)^2} \frac{\log n}{n} } 
          + \sqrt{ \mathbf{r}_{r}(\Sigma) \frac{\log n}{n}} 
          + \frac{\mu_r}{g_{r}} \mathbf{r}_{r}(\Sigma) \frac{\log n}{n}
      \Big)    
    \notag
  \end{align}
  with probability at least \( 1 - n^{-\gamma} \).
\end{theorem}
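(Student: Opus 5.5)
By Lemmas \ref{lem_L_Sigma} and \ref{lem_L_Sigma_empirical}, the statement can be moved entirely to the covariance-operator setting on $\mathcal{H}$. We compare $\widehat\Sigma=n^{-1}\sum_i k(X_i,\cdot)\otimes k(X_i,\cdot)$ with $\Sigma=\sum_j\lambda_j u_j\otimes u_j$. The nonzero eigenvalues of $\widehat\Sigma$ are exactly $\widehat\lambda_j$. For the eigenvectors, $S_n\widehat u_j=(n\widehat\lambda_j)^{1/2}\widehat v_j$ and $S_n u_j/\sqrt{n\lambda_j}=n^{-1/2}(\phi_j(X_i))_i$. It is therefore natural to work in the whitened coordinates $\eta_{ij}=\phi_j(X_i)$. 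There $\widehat\Sigma$ becomes $\Sigma^{1/2}\widehat{\mathcal{C}}\Sigma^{1/2}$ with $\widehat{\mathcal{C}}=n^{-1}\sum_i \eta_i\otimes\eta_i$ and $\mathbb{E}\widehat{\mathcal{C}}=I$. The eigenvector statement is obtained by applying $n^{-1/2}S_n$, i.e.\ the empirical $L^2(\mathbb{P}_n)$ isometry, to the spectral projector $\widehat P_r$ of $\widehat\Sigma$ onto the $r$-th cluster, and then comparing with $P_r$.

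The core is a deterministic higher-order perturbation bound for the block $I_r$, which is what Sections \ref{sec_HigherOrderSpectralPerturbationBounds}--\ref{sec_ProofsOfThePerturbationBounds} develop; I would state it as follows. Write $E=\widehat\Sigma-\Sigma$ and use the relative (whitened) perturbation
\[
  \tilde E_r=|R_r|^{1/2}E|R_r|^{1/2},\qquad R_r=\sum_{s\ne r}\frac{P_s}{\mu_s-\mu_r}+\frac{P_r}{g_r}.
\]
If $\|\tilde E_r\|\le c$, a Neumann series (Schur complement) argument expresses $\widehat\lambda_j-\mu_r$, $j\in I_r$, as the eigenvalues of $P_rEP_r+P_rER_rEP_r+\dots$ restricted to $\operatorname{ran}P_r$, with remainder controlled by $\mu_r\|\tilde E_r\|^2$. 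Likewise $\widehat P_r-P_r=-(R_rEP_r+P_rER_r)+\text{higher order}$. Consequently the eigenvalue error reduces to bounding three things: $\|P_r\widehat{\mathcal{C}}P_r-P_r\|_2/\sqrt{m_r}$, i.e.\ the fluctuation of the $m_r\times m_r$ matrix $n^{-1}\sum_i(\phi_j(X_i)\phi_k(X_i))_{j,k\in I_r}$; the second-order term; and $\|\tilde E_r\|$. The eigenvector error reduces to the Hilbert--Schmidt norm of $R_rEP_r$, whose square equals $\sum_{s\ne r}\mu_r\mu_s(\mu_r-\mu_s)^{-2}\|P_s(\widehat{\mathcal{C}}-I)P_r\|_2^2$. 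This produces the first term on the right of \eqref{eq:ev:bound:KS}. The $\sqrt{\mathbf r_r(\Sigma)\log n/n}$ and $\frac{\mu_r}{g_r}\mathbf r_r(\Sigma)\log n/n$ terms come from the projection change together with the conversion between $\widehat u_j$ and $\widehat v_j$, because of the factor $(\widehat\lambda_j/\lambda_j)^{1/2}$ and the non-isometry of $S_n$ on $\operatorname{ran}P_r$.

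The probabilistic part (Section \ref{ssec_ProofsOfTheProbabilisticEstimates}) uses matrix Bernstein inequalities, with Assumption \ref{ass_LocalWeylLaw} replacing boundedness. Take the summands $Z_i=|R_r|^{1/2}(k(X_i,\cdot)\otimes k(X_i,\cdot)-\Sigma)|R_r|^{1/2}$. Their norm is bounded by $\|\,|R_r|^{1/2}k(X_i,\cdot)\|^2=\sum_s \frac{\mu_s}{|\mu_s-\mu_r|\vee g_r}\sum_{j\in I_s}\phi_j(X_i)^2\le L\,\mathbf r_r(\Sigma)$ uniformly. The variance $\|\mathbb{E}Z_i^2\|$ is bounded by $L\,\mathbf r_r(\Sigma)\,\|\,|R_r|^{1/2}\Sigma|R_r|^{1/2}\|\lesssim L\,\mathbf r_r(\Sigma)\mu_r/g_r$. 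A dimension-free (intrinsic dimension) Bernstein bound then gives
\[
  \|\tilde E_r\|\lesssim\sqrt{\tfrac{\mu_r}{g_r}\mathbf r_r(\Sigma)\tfrac{\log n}{n}}+\mathbf r_r(\Sigma)\tfrac{\log n}{n}
\]
with probability $1-n^{-\gamma}$, and this is $\le c$ under the hypothesis of the theorem. The block term $P_r(\widehat{\mathcal{C}}-I)P_r$ is handled by the same inequality, since $\sum_{j\in I_r}\phi_j^2\le Lm_r$; this yields $\sqrt{m_r\log n/n}$ in Hilbert--Schmidt norm divided by $\sqrt{m_r}$. The Hilbert--Schmidt norms $\|P_s(\widehat{\mathcal{C}}-I)P_r\|_2$ are controlled by a vector Bernstein inequality in the Hilbert space of operators, using $\mathbb{E}\|P_s\eta\otimes P_r\eta\|_2^2\le L m_r m_s$.

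The main obstacle I expect is the second-order eigenvalue term $P_rER_rEP_r$. Its expectation is of size $\mu_r\mathbf r_r(\Sigma)/n$, not of the naive squared order, so it must be centered and expanded as a U-statistic plus diagonal part. The diagonal part is bounded by $\mathbf r_r(\Sigma)/n$ via the local Weyl law, and the off-diagonal part via a decoupling or Bernstein argument, so as to get $\mathbf r_r(\Sigma)\log n/n$ rather than $(\mu_r/g_r)\mathbf r_r(\Sigma)\log n/n$ in \eqref{eq:eva:bound:KS}. I would first try conditioning on the block coordinates $P_r\eta_i$ and applying Bernstein to the resulting sum of independent terms.
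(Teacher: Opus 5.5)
Your architecture is sound and close to the paper's. Your $\tilde E_r$ is exactly the operator whose norm is $\delta_r$, and your intrinsic-dimension Bernstein step is Proposition~\ref{prp_NonAsymptoticBoundOnDelta_r}. However, the step you flag as the main obstacle rests on a mis-scaling.

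Take $R_r$ to be the reduced resolvent (no $P_r/g_r$ part). Compressing $\tilde E_r$ between $I-P_r$ and $P_r$ gives $\| |R_r|^{1/2}EP_r\|_\infty\le g_r^{1/2}\delta_r$. Hence the whole Schur-complement correction beyond $P_rEP_r$ has operator norm at most $\| |R_r|^{1/2}EP_r\|_\infty^2/(1-2\delta_r)\le g_r\delta_r^2/(1-2\delta_r)$, uniformly for $|\lambda-\mu_r|\le\delta_r g_r$. The prefactor is $g_r$, not $\mu_r$. Passing to $\ell^2$ over the block costs $\sqrt{m_r}$, so after dividing by $\mu_r\sqrt{m_r}$ this contributes $\frac{g_r}{\mu_r}\delta_r^2\lesssim\mathbf{r}_r(\Sigma)\frac{\log n}{n}$, directly from your Bernstein bound and the hypothesis. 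This is consistent with your own observation that $\mathbb{E}P_rER_rEP_r$ is of size $\mu_r\mathbf{r}_r(\Sigma)/n$.

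So no centering or U-statistic analysis of the second-order term is needed, and the paper does none. Corollary~\ref{cor_EigenvaluePerturbationBound} bounds that contribution by $g_r^{1/2}\| |R_r|^{1/2}EP_r\|_2\,\delta_r'\le\sqrt{m_r}\,g_r\delta_r^2$. The paper uses U-statistics (symmetrization plus Bonami) only for the first-order Hilbert--Schmidt norms in Proposition~\ref{prp_BoundsOnTheLinearAndQuadraticTerm}. There your matrix/vector Bernstein is a legitimate substitute, with one condition. Apply the vector inequality to the single Hilbert--Schmidt-valued average $n^{-1}\sum_i R_rk(X_i,\cdot)\otimes P_rk(X_i,\cdot)$. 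It is centered, and its summands are a.s.\ bounded by $L\big(m_r\sum_{s\ne r}m_s\mu_r\mu_s/(\mu_r-\mu_s)^2\big)^{1/2}$. Do not apply it block by block in $s$, since a union bound over infinitely many blocks fails.

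The Schur complement also depends on $\lambda$, so matching its $m_r$ fixed points with $(\widehat\lambda_j)_{j\in I_r}$ needs an argument; it is nonincreasing in $\lambda$ in the Loewner order, which suffices. The paper sidesteps this by applying Hoffman--Wielandt to $\widehat P_r(\widehat\Sigma-\mu_r)\widehat P_r$ and $P_rEP_r$, using Theorem~\ref{thm_EigenvaluePerturbationSeries}.

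The eigenvector part is where your sketch is genuinely incomplete. Write $U_r a=\sum_{j\in I_r}a_ju_j$ and $\widehat U_r a=\sum_{j\in I_r}a_j\widehat u_j$. Since $\|n^{-1/2}S_nh\|_2^2=\langle h,\widehat\Sigma h\rangle_{\mathcal H}$, after splitting off the factors $(\widehat\lambda_j/\mu_r)^{1/2}$ you must bound $\mu_r^{-1}\tr\big((\widehat U_r-U_rO)^*\widehat\Sigma(\widehat U_r-U_rO)\big)$. In this quantity the component of $\widehat P_r$ along $P_s$ is weighted by $\mu_s/\mu_r$, which is unbounded for $s<r$. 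So the relevant non-isometry of $S_n$ lives off $\operatorname{ran}P_r$, not on it, and no bound on $\|\widehat P_r-P_r\|_2$ controls it.

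The missing ingredient is the weighted estimate $\| |R_r|^{-1/2}\widehat P_r\|_2\le\| |R_r|^{1/2}EP_r\|_2/(1-2\delta_r)$ (Lemma~\ref{lem_Contraction}(i)), combined with $\Sigma\preceq\mu_rI+|R_r|^{-1}$ as in the proof of Proposition~\ref{prp_DeterministicBounds}. In your framework it follows from $(I-P_r)\widehat u_j=-A(\widehat\lambda_j)^{-1}(I-P_r)EP_r\widehat u_j$. Here $A(\lambda)=(I-P_r)(\Sigma+E-\lambda)(I-P_r)$ is inverted on $\operatorname{ran}(I-P_r)$, and $\| |R_r|^{-1/2}A(\lambda)^{-1}|R_r|^{-1/2}\|_\infty\le(1-2\delta_r)^{-1}$.

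With $\| |R_r|^{1/2}EP_r\|_2\le\sqrt{m_r}\,g_r^{1/2}\delta_r$, the three terms of the eigenvector bound arise as follows:
\begin{itemize}
\item the weighted estimate gives exactly the $\sqrt{\mathbf{r}_r(\Sigma)\log n/n}$ term;
\item the second-order projector remainder, of Hilbert--Schmidt size $\lesssim\sqrt{m_r}\,\delta_r^2$, gives $\frac{\mu_r}{g_r}\mathbf{r}_r(\Sigma)\frac{\log n}{n}$;
\item $\|R_rEP_r\|_2$ gives the first term.
\end{itemize}
With these two corrections your route closes. For the bounds as stated, the crude estimate $\sqrt{m_r}\,g_r^{1/2}\delta_r$ suffices, so you need no analogue of Proposition~\ref{prp_BoundsOnTheLinearAndQuadraticTerm}(ii).
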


\noindent A proof of Theorem \ref{thm_BoundsForEigenvaluesAndEigenspaces} can be found in Section \ref{sec_AddtionalProofsOfAuxiliaryResults}.
If we combine the upper bounds into one (possibly less sharp) term, then \eqref{eq:eva:bound:KS} and \eqref{eq:ev:bound:KS} imply 
\begin{align}
    \frac{1}{\sqrt{m_r}} \big\| (\widehat{\lambda}_j-\mu_r)_{j \in I_r} \big\|_2
    \le
    C g_r\sqrt{ \frac{\mu_r}{g_{r}}\mathbf{r}_{r}(\Sigma) \frac{\log n}{n} }
  \end{align}
  and
  \begin{align}
    & \ \ \ \
    \inf_{O \in O(m_r)} 
    \frac{1}{ \sqrt{m_r} } \Bigg\| (\widehat v_j)_{j \in I_r}O - \frac{1}{\sqrt{n}}
                                                                \begin{pmatrix}
                                                                  \phi_j(X_1) \\
                                                                  \vdots      \\
                                                                  \phi_j(X_n) 
                                                                \end{pmatrix}_{j \in I_r} 
                           \Bigg\|_2
    \le 
    C \sqrt{ \frac{\mu_r}{g_{r}} \mathbf{r}_{r}(\Sigma) \frac{\log n}{n} }
  \end{align}
  with probability at least \( 1 - n^{-\gamma} \).

Eigenvalues of kernel matrices have been extensively studied in the literature. More recent work focuses on relative eigenvalue error bounds (see e.g.~\cite{MR2274441,pmlr-v99-ostrovskii19a,BarzilaiShamir}). Such eigenvalue concentration bounds up to constants are used in prediction problems and lead to effective rank conditions (see \cite{MR4263288,MR4652993}). However, such bounds do not include joint eigenvalue and eigenprojection concentration bounds, as it is often needed in dimensionality reduction methods (functional and kernel PCA, Diffusion Maps, Laplacian Eigenmaps, etc.).

Similarly as in the previous work \cite{JirakWahl2018PerturbationBoundsUnderRelativeGap,JirakWahl2023RelativePerturbationBounds,Wahl2026PertubationExpansionsI}, the bounds are formulated in terms of the relative rank. The difference is that we explicitly work with kernel matrices under realistic assumptions. This leads to challenges in terms of large multiplicities $m_r$ and eigenfunctions $\phi_j$ that are neither uniformly bounded (as assumed in \cite{JirakWahl2018PerturbationBoundsUnderRelativeGap,JirakWahl2023RelativePerturbationBounds}), nor lead to independent and sub-Gaussian Karhunen-Loève coefficients (as assumed in \cite{Wahl2026PertubationExpansionsI}).

While a linear analog of \eqref{eq:ev:bound:KS} is well-known in the context of principal component analysis and the left-hand side of \eqref{eq:ev:bound:KS} is part of the different formulations of the Davis-Kahan bound, it has, to the best of our knowledge, not yet appeared in the literature on kernel matrices. It shows the non-linear dimensionality reduction property of kernel matrices. The eigenvectors of kernel matrices give a new coordinate system for the embedded data points, approximating the eigenfunctions of the kernel integral operator evaluated at the data points.

We conclude this section by presenting two  main steps in the proof of the main result. To formulate the first step, let 
\begin{align}
    P_{r}: = \sum_{j \in I_{r}}  u_{j} \otimes u_{j}\qquad \text{and}\qquad \widehat{P}_{r}: = \sum_{j \in I_{r}} \widehat{u}_{j} \otimes \widehat{u}_{j}
\end{align}
be the projections on the \( r \)-th eigenspace of \( \Sigma \) and \( \widehat{\Sigma} \) respectively, 
\begin{align}
  R_{r}: & = \sum_{s \ne r} (\mu_{s} - \mu_{r})^{-1} P_{s}
\end{align}
be the $r$-th reduced resolvent, and $ E: = \widehat{\Sigma} - \Sigma$.

\begin{proposition}[Deterministic bounds]
  \label{prp_DeterministicBounds}
   Suppose that
   \begin{align}
                   \delta_r^\prime:= \max\big(\| |R_{r}|^{1/2} E |R_{r}|^{1/2} \|_{\infty}, g_{r}^{-1/2} \| |R_{r}|^{1/2} E  P_{r}        \|_{\infty}
                , g_{r}^{-1} \| P_{r} E P_{r} \|_{\infty}\big)< 1 / 4 - \varepsilon , 
  \end{align}
  for some $\epsilon\in(0,1/4)$. Then there is a constant $C>0$ depending only on $\varepsilon$ such that 
  \begin{align}
          \|(\widehat\lambda_j-\mu_r)_{j\in I_r}\|_{2}
    \leq
          C ( \| P_{r} E P_{r} \|_{2} + g_r^{1/2}\| | R_{r} |^{1/2} E P_{r} \|_{2}\delta_r^\prime)
  \end{align}
  and
  \begin{align}
    \inf_{O \in O(m_r)} 
    \Bigg\| (\widehat v_j)_{j \in I_r} O&- \frac{1}{\sqrt{n}}  \begin{pmatrix}
                                                          \phi_j(X_1) \\
                                                          \vdots      \\
                                                          \phi_j(X_n) 
                                                        \end{pmatrix}_{j \in I_r}  
    \Bigg\|_2
    \leq 
     \Big\| \Big(\frac{\widehat{\lambda}_{j}}{\mu_r} - 1\Big)_{j \in I_{r}} \Big\|_{2}\\
    &+C(\|  R_{r} E P_{r} \|_{2}+\mu_r^{-1/2}\| |R_{r}|^{1/2} E P_{r} \|_{2}+g_r^{-1/2}\| | R_{r} |^{1/2} E P_{r} \|_{2}\delta_r^\prime)
    \notag
  \end{align}
  where the infimum is taken over all orthogonal matrices of dimension $|I_r|$.
\end{proposition}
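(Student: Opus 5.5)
Throughout, \(\|\cdot\|_{2}\) is the Hilbert--Schmidt norm and \(\|\cdot\|_{\infty}\) the operator norm. The plan is to reduce everything to perturbation statements for the pair \((\Sigma,\widehat\Sigma)\) on \(\mathcal{H}\), and then to translate back to \(K\) via Lemma \ref{lem_L_Sigma_empirical}.

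\textbf{Eigenvalues.} The first step uses the higher-order perturbation machinery of Section \ref{sec_HigherOrderSpectralPerturbationBounds}. Under \(\delta_r'<1/4-\varepsilon\), a Neumann-series/contour argument around the \(r\)-th block gives two facts:
\begin{itemize}
\item exactly \(m_r\) eigenvalues of \(\widehat\Sigma\) lie within \(g_r/2\) of \(\mu_r\), namely \((\widehat\lambda_j)_{j\in I_r}\);
\item the restriction of \(\widehat\Sigma\) to \(\operatorname{ran}\widehat P_r\), conjugated to \(\operatorname{ran}P_r\), equals \(\mu_r P_r + P_rEP_r + P_rER_r(I-\cdots)^{-1}EP_r\) up to an isometric identification.
\end{itemize}
This is a Schur-complement / Feshbach reduction. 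Writing \(R_r=|R_r|^{1/2}\,\mathrm{sgn}\,|R_r|^{1/2}\), I bound the second-order remainder in Hilbert--Schmidt norm by
\[
\|P_rE|R_r|^{1/2}\|_2\,\big\||R_r|^{1/2}(I-\cdots)^{-1}|R_r|^{1/2}\big\|_\infty\,\big\||R_r|^{1/2}EP_r\|_\infty .
\]
The middle factor is \(O(1)\) and the last factor is at most \(g_r^{1/2}\delta_r'\). Combining this with Hoffman--Wielandt applied to the compressed \(m_r\times m_r\) matrix yields the eigenvalue bound.

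\textbf{Eigenvectors.} The second step controls \(\|\widehat P_r-P_r\|_2\). The standard expansion \(\widehat P_r-P_r=-(R_rEP_r+P_rER_r)+\text{higher order}\) gives
\[
\|\widehat P_r-P_r\|_2 \le 2\|R_rEP_r\|_2 + C g_r^{-1/2}\||R_r|^{1/2}EP_r\|_2\,\delta_r'.
\]
The higher-order terms are handled with the same factorization through \(|R_r|^{1/2}\).

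\textbf{Translation to \(\mathbb{R}^n\).} The third step moves the bound to \(\mathbb{R}^n\) through \(S_n\). By Lemma \ref{lem_L_Sigma_empirical}, \(\widehat v_j=(n\widehat\lambda_j)^{-1/2}S_n\widehat u_j\), while the target vector is \(n^{-1/2}S_n\phi_j=(n\mu_r)^{-1/2}S_nu_j\). Note that \(n^{-1}S_n^*S_n=\widehat\Sigma\), so \(\|n^{-1/2}S_nh\|^2=\langle\widehat\Sigma h,h\rangle\). Choose \(O\) so that \(\widehat U O\) is the polar-factor alignment of \(\widehat U=(\widehat u_j)_{j\in I_r}\) with \(U=(u_j)_{j\in I_r}\). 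Then split
\[
\widehat V O-\Phi = n^{-1/2}S_n\big[(\widehat U\widehat\Lambda^{-1/2}-\widehat U\mu_r^{-1/2})O\big]+(n\mu_r)^{-1/2}S_n(\widehat UO-U).
\]
The first term equals \(\widehat\Lambda^{1/2}(\widehat\Lambda^{-1/2}-\mu_r^{-1/2})\) in norm, because \(\widehat u_j\) are eigenvectors of \(\widehat\Sigma\). This is at most \(\|(\widehat\lambda_j/\mu_r-1)_j\|_2\), using \(|1-\sqrt{x}|\le|1-x|\); this produces the first term of the bound.

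For the second term, write \(D=\widehat UO-U\), whose norm is controlled by \(\|\widehat P_r-P_r\|_2\). Then \(\mu_r^{-1}\tr(D^*\widehat\Sigma D)=\mu_r^{-1}\tr(D^*\Sigma D)+\mu_r^{-1}\tr(D^*ED)\).

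\textbf{Main obstacle.} Bounding \(\mu_r^{-1/2}\|\widehat\Sigma^{1/2}D\|_2\) is the main difficulty. The components of \(D\) lie mostly in \(\operatorname{ran}P_r^\perp\), with leading part \(R_rEP_r\)-type, plus a \(P_r\) part of second order. Hence \(\mu_r^{-1}\tr(D^*\Sigma D)\) involves \(\sum_s \mu_s\mu_r^{-1}(\mu_s-\mu_r)^{-2}\|P_sEP_r\|^2\). Since \(\mu_s/(\mu_r|\mu_s-\mu_r|)\le\ldots\), I will bound \(\mu_s/(\mu_r(\mu_s-\mu_r)^2)\) by \((\mu_s-\mu_r)^{-2}+\mu_r^{-1}|\mu_s-\mu_r|^{-1}\), since \(\mu_s\le\mu_r+|\mu_s-\mu_r|\). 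This splits the bound into \(\|R_rEP_r\|_2\) and \(\mu_r^{-1/2}\||R_r|^{1/2}EP_r\|_2\), as desired.

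The cross term \(\tr(D^*ED)\) is bounded through \(|R_r|^{1/2}E|R_r|^{1/2}\), i.e. by \(\delta_r'\). This is absorbed into the \(g_r^{-1/2}\||R_r|^{1/2}EP_r\|_2\delta_r'\) term. Getting these weighted estimates cleanly, with \(\Sigma^{1/2}\) rather than \(g_r^{1/2}\) weights, is where I expect the careful bookkeeping.
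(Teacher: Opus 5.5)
\textbf{There is a genuine gap, at the step you yourself flag as the main obstacle.} Your third step is set up as in the paper: the rescaling term is at most $\|(\widehat\lambda_j/\mu_r-1)_{j\in I_r}\|_2$, and the rest is $\mu_r^{-1}\tr(D^*\Sigma D)+\mu_r^{-1}\tr(D^*ED)$ with $D=\widehat UO-U$. But you only estimate the $\Sigma$-weighted norm of a ``leading part'' $R_rEP_r$ of $D$. For the remainder of $D$ you have only unweighted control, of order $g_r^{-1/2}\||R_r|^{1/2}EP_r\|_2\delta_r'$. That does not suffice, because on the components $s<r$ the weight $\mu_s/\mu_r$ in $\mu_r^{-1}\tr(D^*\Sigma D)$ can be arbitrarily large.

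The missing idea is to apply your scalar inequality $\mu_s\le\mu_r+|\mu_s-\mu_r|$ at the operator level to all of $D$, without expanding $D$. This gives $\Sigma\preceq\mu_r I+|R_r|^{-1}$ (pseudo-inverse), and since $|R_r|^{-1/2}U=0$,
\[
\tr(D^*\Sigma D)\le \mu_r\|D\|_2^2+\||R_r|^{-1/2}\widehat P_r\|_2^2 .
\]
The second term is controlled a priori, with no series, by Lemma \ref{lem_Contraction}(i): $\||R_r|^{-1/2}\widehat P_r\|_2\le \||R_r|^{1/2}EP_r\|_2/(1-2\delta_r)$, where $\delta_r\le 2\delta_r'<1/2-2\varepsilon$.

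The $E$-term is handled the same way. Split $I=P_r+(I-P_r)$ and use $\|D\|_2\le\|\widehat P_r-P_r\|_2\le\sqrt2\,g_r^{-1/2}\||R_r|^{-1/2}\widehat P_r\|_2$. This gives $\tr(D^*ED)\le 6\delta_r'\||R_r|^{-1/2}\widehat P_r\|_2^2$. After dividing by $\mu_r$, this lands in the $\mu_r^{-1/2}\||R_r|^{1/2}EP_r\|_2$ term. It does not land in $g_r^{-1/2}\||R_r|^{1/2}EP_r\|_2\delta_r'$ as you claim, since that would require $g_r\lesssim \mu_r\delta_r'$. This misattribution is harmless, because both terms appear in the target bound. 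Together with $\|D\|_2\le\|\widehat P_r-P_r\|_2$ and Corollary \ref{cor_ProjectorPerturbationBound} (which is your projector step), the eigenvector bound follows.

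\textbf{Your eigenvalue step is also incorrect as formulated.} The Schur complement
\[
F(\lambda)=\mu_rP_r+P_rEP_r-P_rE\big((I-P_r)(\widehat\Sigma-\lambda)(I-P_r)\big)^{-1}EP_r
\]
(inverse taken on $\operatorname{ran}(I-P_r)$) depends on the spectral parameter. So the $(\widehat\lambda_j)_{j\in I_r}$ solve the nonlinear problem $\lambda\in\operatorname{spec}F(\lambda)$. They are not the spectrum of one fixed self-adjoint operator on $\operatorname{ran}P_r$ ``up to isometric identification'', so Hoffman--Wielandt cannot be applied directly. You would need an extra argument, for example monotonicity of $F$ in $\lambda$ combined with Weyl's inequality, or an exact similarity through $P_r\widehat U$ corrected by a polar decomposition.

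None of this is needed. The first claim is exactly Corollary \ref{cor_EigenvaluePerturbationBound}: apply Theorem \ref{thm_EigenvaluePerturbationSeries} with $p=1$ and $Q_r^{(1)}=P_rEP_r$ to $\widehat P_r(\widehat\Sigma-\mu_r)\widehat P_r$, whose Hilbert--Schmidt norm equals $\|(\widehat\lambda_j-\mu_r)_{j\in I_r}\|_2$. That is how the paper proceeds.
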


Proposition \ref{prp_DeterministicBounds} is a consequence of the higher-order spectral perturbation expansions that we present in Section \ref{sec_HigherOrderSpectralPerturbationBounds}. The operator norms in $\delta_r^\prime$ is bounded in Section \ref{ssec_ProofsOfTheProbabilisticEstimates} using a standard operator Bernstein inequality for sums of independent random self-adjoint operator. The following proposition gives a (surprisingly) simple probabilistic description of theinvolved perturbation terms. It relies on the local Weyl law in Assumption \ref{ass_LocalWeylLaw}. Recall that a random variable $Z$ is called sub-exponential with parameter $K$, if 
\begin{align}
  \|Z\|_{\psi_1}:=\sup_{p\geq 1}p^{-1}\|Z\|_{L^p}\leq  K. 
\end{align}
This is equivalent to 
\begin{align}
 \mathbb{P}(|Z|\geq Ct)\leq 2\exp(-t/K)\qquad  \text{for all }t\geq 0,
\end{align}
where $C>0$ is an absolute constant.

\begin{proposition}[Probabilistic bounds]
  \label{prp_BoundsOnTheLinearAndQuadraticTerm}
  If Assumptions \ref{ass_data}--\ref{ass_LocalWeylLaw} are satisfied, then the following holds.
\begin{itemize}
    \item[(i)] The random variable $\| P_{r} E P_{r} \|_{2}^2 $ is sub-exponential with parameter
  \begin{align}
    16\sqrt{2}L^2\frac{m_r^2\mu_r^2}{n}.
  \end{align}
  \item[(ii)] The random variable $| | R_{r} |^{1/2} E P_{r} \|_{2}^2 $ is sub-exponential with parameter
  \begin{align}
    8\sqrt{2}L^2\frac{m_r\mu_r}{n}\sum_{s\neq r}\frac{m_s\mu_s}{|\mu_r-\mu_s|}.
  \end{align}
  \item[(iii)] The random variable $\| R_{r}  E P_{r} \|_{2}^2 $ is sub-exponential with parameter
  \begin{align}
    8\sqrt{2}L^2\frac{m_r\mu_r}{n}\sum_{s\neq r}\frac{m_s\mu_s}{(\mu_r-\mu_s)^2}.
  \end{align}
\end{itemize} 
\end{proposition}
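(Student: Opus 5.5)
Each of the three quantities is the squared Hilbert--Schmidt norm of a sum of i.i.d.\ centered Hilbert--Schmidt operators. Write $E=\frac1n\sum_{i=1}^n Z_i$ with $Z_i=k(X_i,\cdot)\otimes k(X_i,\cdot)-\Sigma$, and let $A\in\{P_r,|R_r|^{1/2},R_r\}$. Then
\begin{align}
  AEP_r=\frac1n\sum_{i=1}^n\big(Ak(X_i,\cdot)\otimes P_rk(X_i,\cdot)-A\Sigma P_r\big),
\end{align}
and $A\Sigma P_r=0$ unless $A=P_r$. In coordinates, Lemma \ref{lem:KL} gives $k(X_i,\cdot)=\sum_j\sqrt{\lambda_j}\phi_j(X_i)u_j$. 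Hence the $(k,l)$ entry of $AEP_r$ for $l\in I_r$ is $\frac1n\sum_i a_k\sqrt{\lambda_k\mu_r}\,(\phi_k(X_i)\phi_l(X_i)-\delta_{kl})$, where $a_k$ is the diagonal weight of $A$: $1$ on $I_r$, $|\mu_s-\mu_r|^{-1/2}$ on $I_s$, or $(\mu_s-\mu_r)^{-1}$ on $I_s$.

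The plan is to bound the $L^p$ norms of $\|AEP_r\|_2$ directly by a Hilbert-space Bernstein/Rosenthal-type moment inequality and then convert to the $\psi_1$ norm of the square. Set $Y_i:=Ak(X_i,\cdot)\otimes P_rk(X_i,\cdot)-\mathbb{E}[\cdot]$, i.i.d.\ centered in the Hilbert space of HS operators. The key deterministic input is the local Weyl law (Assumption \ref{ass_LocalWeylLaw}). Since $\|a\otimes b\|_2=\|a\|\|b\|$,
\begin{align}
  \|P_rk(x,\cdot)\|_{\mathcal H}^2=\mu_r\sum_{j\in I_r}\phi_j(x)^2\le L m_r\mu_r,
  \qquad
  \|Ak(x,\cdot)\|_{\mathcal H}^2=\sum_s a_s^2\mu_s\sum_{j\in I_s}\phi_j(x)^2\le L\sum_s a_s^2 m_s\mu_s.
\end{align}
So $\|Y_i\|_2\le 2b$ almost surely, with $b^2=L^2m_r\mu_r\sum_s a_s^2m_s\mu_s$. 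For $A=|R_r|^{1/2}$ this gives $\sum_{s\ne r}m_s\mu_s/|\mu_s-\mu_r|$, for $A=R_r$ it gives $\sum_{s\neq r} m_s\mu_s/(\mu_s-\mu_r)^2$, and for $A=P_r$ it gives $m_r\mu_r$. These are exactly the expressions in (i)--(iii), which confirms the right scaling. The variance proxy $\mathbb{E}\|Y_i\|_2^2\le b^2$ obeys the same bound.

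Next apply a moment inequality for sums of bounded independent centered vectors in Hilbert space (Pinelis, or symmetrization plus Khintchine--Kahane). It gives
\begin{align}
  \Big\|\big\|\tfrac1n\textstyle\sum_iY_i\big\|_2\Big\|_{L^p}\lesssim\sqrt{\tfrac{p}{n}}\,\sigma+\tfrac{p}{n}\,b.
\end{align}
Squaring, $\|\|AEP_r\|_2^2\|_{L^p}\lesssim p\,b^2/n+p^2b^2/n^2$. The second term is a nuisance: it is sub-Weibull of order $1/2$, not sub-exponential. This is the main obstacle, and I would avoid it by a sharper argument. Since the summands are bounded, $\|Y_i\|_2\le 2b$, and $\mathbb{E}\|\sum Y_i\|^2=n\sigma^2$, Pinelis' inequality gives $\mathbb{P}(\|\sum Y_i\|\ge t)\le 2\exp(-t^2/(2n\sigma^2+2bt/3))$. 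For $t\le nb$ this is effectively Gaussian. For $t>nb$ the probability is zero, because $\|\frac1n\sum Y_i\|_2\le 2b$ deterministically. On the bounded range $p\le n$, the term $p^2b^2/n^2$ is at most $pb^2/n$. For $p>n$ we use $\|AEP_r\|_2^2\le 4b^2\le 4pb^2/n$ directly. In both regimes $\|\|AEP_r\|_2^2\|_{L^p}\le C p\, b^2/n$, so the square is sub-exponential with parameter of order $b^2/n$. Tracking constants (a $2\sqrt2$ from centering and Pinelis' constant $\sqrt2$ in the moment bound) should recover the stated $8\sqrt2L^2$, and $16\sqrt2L^2$ in case (i), where the centering term $P_r\Sigma P_r$ is present and doubles the bound.
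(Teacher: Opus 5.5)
\paragraph{Overall.} Your argument is correct up to an absolute constant, but it is organized differently from the paper's.

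\paragraph{Comparison with the paper's route.} The paper never bounds $\|AEP_r\|_2$ itself. Writing $Y_i=Y(X_i)$, it expands the square:
\[
\Big\|\frac{1}{n}\sum_{i=1}^nY_i\Big\|_2^2=\frac{1}{n^2}\sum_{i=1}^nh(X_i,X_i)+\frac{2}{n^2}\sum_{1\leq i<i'\leq n}h(X_i,X_{i'}),\qquad h(x,y)=\langle Y(x),Y(y)\rangle_2 .
\]
This $h$ is exactly the kernel the paper writes out in coordinates. The diagonal part is bounded almost surely. The off-diagonal part is a degenerate U-statistic, handled by symmetrization and Bonami's inequality for Rademacher chaos of order two, together with $\sup|h|\le b^2$ (resp.\ $2b^2$ in case (i)). Since order-two chaos has $L^p$ norms growing linearly in $p$, the square is sub-exponential immediately, with explicit constants. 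No large-$p$ regime needs separate treatment.

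You work one level down, with the norm of a Hilbert-space-valued sum. A Bernstein/Pinelis bound then leaves a $p^2b^2/n^2$ term after squaring. Your observation that $\|AEP_r\|_2\le 2b$ deterministically, combined with the split $p\le n$ versus $p>n$, removes it correctly. In exchange, your route is variance-sensitive: $\sigma^2=\mathbb{E}\|Y_1\|_2^2$ carries only one factor $L$ under the local Weyl law. So for $p\lesssim\log n$ the leading term could be improved from $L^2$ to $L$, although you give this up by using $\sigma\le b$.

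\paragraph{The gap: the explicit constants.} Your last sentence is not actually proved. The proposition asserts the explicit parameters $16\sqrt{2}L^2(\cdot)$ and $8\sqrt{2}L^2(\cdot)$. Integrating Pinelis' tail and splitting at $p=n$ only produces some unspecified absolute constant. The bookkeeping heuristic is also off: centering doubles the bound on $\|Y_i\|_2$, and hence quadruples, not doubles, the parameter of the square.

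\paragraph{A fix that recovers the stated constants.} Drop the Bernstein step altogether and use two ingredients:
\begin{itemize}
\item symmetrization, which costs a factor $2$;
\item the Hilbert-space Khintchine inequality $(\mathbb{E}\|\sum_i\epsilon_iv_i\|^q)^{1/q}\le\sqrt{q-1}\,(\sum_i\|v_i\|^2)^{1/2}$ for $q\ge 2$, which follows from scalar Bonami plus Minkowski in $L^{q/2}$.
\end{itemize}
Apply these conditionally on the data, using $\sum_i\|Y_i\|_2^2\le nB^2$ almost surely, where $\|Y_i\|_2\le B$. This gives
\[
\big\|\|AEP_r\|_2^2\big\|_{L^p}=\Big\|\Big\|\frac{1}{n}\sum_{i=1}^nY_i\Big\|_2\Big\|_{L^{2p}}^2\le\frac{4(2p-1)B^2}{n}\le\frac{8pB^2}{n}\qquad\text{for all }p\ge 1,
\]
which is sub-exponential with no $p^2/n^2$ term at all.

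In (ii) and (iii) no centering is needed, so $B=b$ and the parameter is $8b^2/n$.

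In (i), taking $B=2b$ would give $32b^2/n>16\sqrt{2}\,b^2/n$, which is too large. Instead note that
\[
\|P_rk(x,\cdot)\otimes P_rk(x,\cdot)-\mu_rP_r\|_2^2=\mu_r^2(S^2-2S+m_r),\qquad S=\sum_{j\in I_r}\phi_j(x)^2\in[0,Lm_r].
\]
By convexity in $S$ this is at most $L^2m_r^2\mu_r^2=b^2$, so $B=b$ works there too and the parameter is $8b^2/n\le 16\sqrt{2}\,b^2/n$.
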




\section{Two examples}
\label{sec_Examples}

\subsection{Example 1: The sphere and manifold learning}
\label{ssec_TheSphere}

For \( d \ge 1 \), consider
\begin{align}
  \mathcal{X} = \mathbb{S}^{d}(R): = \{ x \in \mathbb{R}^{d + 1}: \| x \|_2 = R \} 
\end{align}
with $\|\cdot\|_2$ being the Euclidean norm in $\mathbb{R}^{d+1}$ and radius \( R > 0 \) chosen such that the volume of $ \mathbb{S}^d(R) $ is equal to one. From the volume formula of the \( (d + 1) \)-dimensional Euclidean ball with radius \( R > 0 \) and Stirling's approximation of the Gamma function, it follows that
\begin{align}
  R = \Big(\frac{\Gamma(\frac{d+1}{2})}{2 \pi^{\frac{d+1}{2}}}\Big)^{\frac{1}{d}}
  \qquad \text{and} \qquad
  c\sqrt{d}\leq  R\leq C\sqrt{d}
\end{align}
for some absolute constants $ c, C > 0 $.
We let $ \mathbb{P}^X $ be the uniform distribution on $ \mathbb{S}^d(R) $, that is the volume measure on $ \mathbb{S}^d(R) $, so that Assumption \ref{ass_data} is satisfied. For $t>0$, let $ k_t(x, y) $ be the heat kernel on $ \mathbb{S}^d(R) $, which satisfies Assumptions \ref{ass_Kernel} and \ref{ass_Mercer} by \cite[Theorem 7.13 and Theorem 7.20]{MR2569498}. The associated integral kernel operator  is the heat semigroup on \( \mathbb{S}^{d}(R) \) at time $t$, and we also write $e^{- t \Delta_{\mathbb{S}^{d}(R)}}$ instead of $L_{k_t}$, i.e. 
\begin{align}
  e^{- t \Delta_{\mathbb{S}^{d}(R)}} f(x)=L_{k_t}f(x) = \int_{\mathbb{S}^d(R)} f(y)  k_{t}(x, y) \mathbb{P}^{X}(d y) 
  \qquad f \in L^{2}(\mathbb{S}^{d}(R)).
\end{align}
Moreover, there are real numbers $0=\nu_1<\nu_2<\nu_3<\cdots \rightarrow\infty$, an orthonormal basis \( (Y_{r,j})_{r \ge 1,1\leq j\leq m_r} \) of $ L^{2}(\mathbb{S}^{d}(R))$ consisting of smooth eigenfunctions of \( \Delta_{\mathbb{S}^{d}(R)}\), such that (see e.g.~\cite[Chapter 10.4]{MR2569498})
\begin{align}
  e^{- t \Delta_{\mathbb{S}^{d}(R)}}Y_{r,j}=e^{-t\nu_r}Y_{r,j},\qquad \Delta_{\mathbb{S}^{d}(R)}Y_{r,j}=\nu_rY_{r,j}\quad \text{ for all $1\leq j\leq m_r$ and $r\geq 1$,}
\end{align}
and the heat kernel \( k_{t} \) admits the representation 
\begin{align}
  k_{t}(x, y) = \sum_{r = 1}^{\infty} e^{-t\nu_r} \sum_{j=1}^{m_r}Y_{r,j}(x) Y_{r,j}(y), 
  \qquad x, y \in \mathbb{S}^{d}(R)
\end{align}
with  uniform convergence in $x,y\in \mathbb{S}^{d}(R)$. The distinct eigenvalues and their multiplicities are explicit.

\begin{lemma}[Eigenvalues and their multiplicities for the sphere]
  \label{lem_HeatKernelEigenvaluesAndTheirMultiplicities}
  The distinct eigenvalues of the heat kernel operator on \( \mathbb{S}^{d}(R) \) and their multiplicities are given by
  \begin{align}
    \mu_{r, t} = e^{- t \nu_{r}} = \exp \Big( \frac{- t (r - 1) (r + d - 2)}{R^{2}} \Big)
    \qquad \text{ for all } r \ge 1,
  \end{align}
  \( m_{1} = 1 \), \( m_{2} = d + 1 \) and 
  \begin{align}
    m_{r}= |I_r| = \binom{d + r - 1}{r - 1} - \binom{d + r - 3}{r - 3}
          = (2 r + d - 3) \frac{(r+d-3)!}{(d-1)!(r-1)!}
  \end{align}
  for all \( r \ge 3 \).
\end{lemma}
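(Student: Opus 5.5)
The plan is to reduce the statement to the classical theory of spherical harmonics on the unit sphere $\mathbb{S}^d$ and then rescale to radius $R$. First, the eigenvalues. On the unit sphere $\mathbb{S}^d\subset\mathbb{R}^{d+1}$, the eigenfunctions of the (nonnegative) Laplace--Beltrami operator are the restrictions of harmonic homogeneous polynomials of degree $k\ge 0$. These carry eigenvalue $k(k+d-1)$. Under the dilation $x\mapsto Rx$ the metric scales by $R^2$, so $\Delta_{\mathbb{S}^d(R)}$ has eigenvalues $k(k+d-1)/R^2$ on the same (rescaled) function spaces. Writing $k=r-1$ gives $\nu_r=(r-1)(r+d-2)/R^2$. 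Since $k\mapsto k(k+d-1)$ is strictly increasing on $k\ge0$, these eigenvalues are distinct. This is consistent with the ordering $0=\nu_1<\nu_2<\cdots$, and it gives $\mu_{r,t}=e^{-t\nu_r}$ via the spectral representation of $e^{-t\Delta_{\mathbb{S}^d(R)}}$ stated before the lemma.

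Next, the multiplicities. Let $\mathcal{P}_k$ be the space of homogeneous polynomials of degree $k$ in $d+1$ variables, so $\dim\mathcal{P}_k=\binom{d+k}{k}$. Let $\mathcal{H}_k\subset\mathcal{P}_k$ be the harmonic ones. I would invoke the standard decomposition $\mathcal{P}_k=\mathcal{H}_k\oplus |x|^2\mathcal{P}_{k-2}$. This follows because the Laplacian $\Delta:\mathcal{P}_k\to\mathcal{P}_{k-2}$ is surjective, which in turn holds since $|x|^2\mathcal{P}_{k-2}$ is the orthogonal complement of $\ker\Delta$ under the Fischer inner product $\langle p,q\rangle=p(\partial)\bar q$. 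Hence
\[
m_r=\dim\mathcal{H}_{r-1}=\binom{d+r-1}{r-1}-\binom{d+r-3}{r-3}.
\]
The restriction map to the sphere is injective on each $\mathcal{H}_k$ by the maximum principle, and $\bigoplus_k\mathcal{H}_k|_{\mathbb{S}^d}$ is dense in $L^2$ by Stone--Weierstrass. Therefore these spaces exhaust the eigenspaces, so $m_r$ is exactly the multiplicity. For the special cases, $r=1$ gives the constants with $m_1=1$, and $r=2$ gives the linear functions with $m_2=d+1$; there the subtracted binomial coefficient vanishes by convention.

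Finally, the closed form for $r\ge3$ is a routine computation. Setting $k=r-1$, I would write
\[
\binom{d+k}{k}-\binom{d+k-2}{k-2}=\frac{(d+k-2)!}{(d-1)!\,k!}\Big[\frac{(d+k)(d+k-1)}{d}-\frac{k(k-1)}{d}\Big].
\]
The bracket equals $\big(d^2+2dk-d\big)/d=2k+d-1=2r+d-3$, which gives the stated formula.

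The main obstacle is not computational but lies in justifying completeness and the decomposition $\mathcal{P}_k=\mathcal{H}_k\oplus|x|^2\mathcal{P}_{k-2}$. In the proof I would cite this from a standard reference (e.g. Stein--Weiss or the Chapter 10.4 reference already used) rather than reprove it.
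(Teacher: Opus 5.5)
Your proposal is correct and follows essentially the paper's route: the paper just cites the classical computation in Chavel's book (Chapter II.4), which proceeds via harmonic homogeneous polynomials, the decomposition $\mathcal{P}_k=\mathcal{H}_k\oplus|x|^2\mathcal{P}_{k-2}$, and density of their restrictions to the sphere. Your extra details (the $R^{-2}$ rescaling, the index shift $k=r-1$, and the algebra producing the factor $2r+d-3$) are accurate and make explicit what the paper leaves to that citation.
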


  Both claims follows from \cite[Chapter II.4]{MR768584}. Here a typo has to be corrected, It yields \( k + 1 \) instead of \( 2 k + 1 \) for the multiplicities of \( S^{2} \). 
  Moreover, the local Weyl law holds with $L=1$, as stated in the next lemma.

\begin{lemma}[Local Weyl law for the sphere]
  \label{lem_LocalWeylLawForTheSphere}
  The eigenfunctions of the heat kernel operator \( L_{k_{t}} \) satisfy a local Weyl law as in Assumption \ref{ass_LocalWeylLaw} with a constant \( L = 1 \).
\end{lemma}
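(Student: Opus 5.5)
The plan is to show that in fact equality holds,
\begin{align}
  \sum_{j=1}^{m_r} Y_{r,j}^2(x) = m_r \qquad \text{for all } x\in\mathbb{S}^d(R),\ r\geq 1,
\end{align}
which gives Assumption \ref{ass_LocalWeylLaw} with $L=1$. The argument is the classical addition-theorem reasoning based on rotation invariance.

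\emph{Identifying the blocks.} Let $H_r\subset L^2(\mathbb{S}^d(R))$ denote the eigenspace of $\Delta_{\mathbb{S}^d(R)}$ for $\nu_r$. For fixed $t>0$ the numbers $\mu_{r,t}=e^{-t\nu_r}$ are strictly decreasing in $r$ by Lemma \ref{lem_HeatKernelEigenvaluesAndTheirMultiplicities}. Hence the eigenblocks $I_r$ of $L_{k_t}$ correspond exactly to the spaces $H_r$, and $(\phi_j)_{j\in I_r}$ is an orthonormal basis of $H_r$.

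\emph{Basis independence.} Define $F_r(x):=\sum_{j\in I_r}\phi_j^2(x)$. This is the diagonal $p_r(x,x)$ of the integral kernel $p_r(x,y)=\sum_{j}\phi_j(x)\phi_j(y)$ of the orthogonal projection onto $H_r$. Concretely, if $(\psi_j)$ is another orthonormal basis of $H_r$, then $\psi=O\phi$ for some $O\in O(m_r)$. It follows that $\sum_j\psi_j(x)^2=\|O\phi(x)\|_2^2=\|\phi(x)\|_2^2$. So $F_r$ does not depend on the choice of orthonormal basis. Since the functions are smooth, this holds pointwise, not just almost everywhere.

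\emph{Rotation invariance.} Take $Q\in O(d+1)$. Then $Q$ is an isometry of $\mathbb{S}^d(R)$, so it preserves the volume measure $\mathbb{P}^X$ and commutes with the Laplace--Beltrami operator. Therefore $f\mapsto f\circ Q$ is a unitary map of $L^2(\mathbb{P}^X)$ that maps $H_r$ onto itself. Consequently $(\phi_j\circ Q)_{j\in I_r}$ is again an orthonormal basis of $H_r$. By basis independence,
\begin{align}
  F_r(Qx)=F_r(x) \qquad \text{for all } x\in\mathbb{S}^d(R),\ Q\in O(d+1).
\end{align}
Since $O(d+1)$ acts transitively on the sphere, $F_r$ is equal to a constant $c_r$.

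\emph{Identifying the constant.} Integrate against $\mathbb{P}^X$. Because the volume is one,
\begin{align}
  c_r=\int F_r\,d\mathbb{P}^X=\sum_{j\in I_r}\|\phi_j\|_{L^2}^2=m_r .
\end{align}
This proves the displayed equality and hence the local Weyl law with $L=1$.

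The only step needing care is the claim that isometries commute with $\Delta_{\mathbb{S}^d(R)}$ and preserve the measure. Both are standard facts, for example from the intrinsic definition of the Laplace--Beltrami operator in \cite{MR2569498}. There is no real obstacle here.
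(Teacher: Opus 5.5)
Your proposal is correct and follows essentially the same invariance argument that the paper gives as its elementary alternative to citing Theorem 2.9 of the literature: the sum $\sum_j Y_{r,j}^2$ does not depend on the choice of basis, the eigenspaces are rotation invariant, $O(d+1)$ acts transitively on the sphere, and integrating identifies the constant as $m_r$. Your explicit check that the blocks $I_r$ coincide with the Laplacian eigenspaces, because $r\mapsto e^{-t\nu_r}$ is strictly decreasing, is a small detail the paper leaves implicit.
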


\begin{proof}[Proof]
  Follows from \cite[Theorem 2.9]{MR2934227}. Alternatively, one can also argue elementary by invariance arguments. Since the Laplacian is invariant under any orthogonal tranformation \( T \) of \( \mathbb{R}^{d + 1} \), the same is true for its eigenspaces.
  Therefore, for any \( r \ge 1 \), both \( (Y_{r,j})_{1\leq j\leq m_r} \) and \( (Y_{r,j} \circ T)_{1\leq j\leq m_r} \) are orthonomal bases of the \( r \)-th eigenspace.
  From the uniqueness of projection kernel onto this space and the smoothness of the \( (Y_{r,j})_{1\leq j\leq m_r} \), it now follows that
  \begin{align}
    \sum_{j=1}^{m_r}  Y_{r,j}^{2}(x) = \sum_{j=1}^{m_r} Y_{r,j}^{2}(T x)
    \qquad \text{ for all } x \in \mathbb{S}^{d}.
  \end{align}
  Since any \( y \in \mathbb{S}^{d}(R) \) can be represented as \( T x \), the sum is constant.
  By integrating with respect to \( \mathbb{P}^{X} \), we obtain that it takes the value \( m_{r} \). 
\end{proof}

The heat kernel example involves three parameters, the number of observations $n$, the time parameter $t>0$ and the dimension $d$. Depending on their size and relation, we will consider the large $t$ regime, the small $t$ regime, and the large $d$ regime. The following lemma provides the relevant bounds for the relative rank \( \mathbf{r}_{r}(\Sigma) \). Interestingly, there are dimensionality effects depending on whether $d=1$, $d=2$, or $d\geq 3$, and the relative rank behaves differently for small and large $t$.

\begin{lemma}[Relative rank for the heat kernel on the sphere]
  \label{lem_RelativeRankForTheHeatKernelInDifferentRegimes}
  In the above setting, the following bounds hold.
  \begin{enumerate}[label=(\roman*)]
    \item 
      For \( d = 1 \), there is an absolute constant \( C > 0 \) such that for any \( r \ge 1 \),
      \begin{align}
        \mathbf{r}_r(\Sigma) \le C \Big( r+ \frac{1}{tr} \log(er) \Big).
      \end{align}

    \item 
      For \( d = 2 \), there is an absolute constant \( C > 0 \) such that for any \( r \ge 1 \),
      \begin{align}
        \mathbf{r}_r(\Sigma) & \le C\Big( r^{2} + \frac{1}{t}(\log(e r) + \log_+(1/t)) \Big).
      \end{align}

    \item
      For \( d \ge 3 \), there is a constant $C>0$ depending only on $d$ such that  for any \( r \ge 1 \),
      \begin{align}
        \mathbf{r}_r(\Sigma) & \le C\Big( r^{d} + \frac{r^{d-2}}{t} \log(er) + t^{-d/2} \Big).
      \end{align}
      
    \item 
      In the large \( d \) regime, under the assumption that  \( t d / R^2 \ge \log 2 \), there exists an absolute constant $ C > 0 $ such that 
      \begin{align}
        \mathbf{r}_2(\Sigma) \le C (d + 1) \exp \Big( 2 (d + 1) e^{-\frac{d t}{R^2}} \Big)
        \qquad \text{ and } \qquad 
        \frac{\mu_{2}}{g_2} \le C \Big( 1 + \frac{R^{2}}{t d} \Big).
      \end{align}
      In particular, if \( t d / R^2 \ge \log d \), then 
      \begin{align}
        \mathbf{r}_2(\Sigma) \le  C d
        \qquad \text{ and } \qquad 
        \frac{\mu_{2}}{g_2} \le C.
      \end{align}
  \end{enumerate}
\end{lemma}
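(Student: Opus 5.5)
The plan is to bound the two parts of $\mathbf{r}_r(\Sigma)$ separately. Write $\tau := t/R^2$ and $\nu_s R^2 = (s-1)(s+d-2)$, so that $\mu_s = e^{-\tau (s-1)(s+d-2)}$. The first part is the sum over $s \ne r$; the second is the block term $m_r\mu_r/g_r$.

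\emph{Reduction to estimates on $|1-e^{-x}|$.} Each summand equals
\[
\frac{m_s}{|e^{\tau(\nu_s-\nu_r)R^2}-1|}.
\]
I will use two elementary facts. For $x\ge 0$,
\[
|1-e^{-x}|\ge \min(x,1)/2 .
\]
For $s>r$,
\[
\frac{\mu_s}{\mu_r-\mu_s}=\frac{1}{e^{x}-1},\qquad x=\tau(\nu_s-\nu_r)R^2 .
\]
For $s<r$,
\[
\frac{\mu_s}{\mu_s-\mu_r}=\frac{1}{1-e^{-x}} .
\]
I also need the gap identity
\[
(\nu_{s}-\nu_r)R^2=(s-r)(s+r+d-3).
\]

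\emph{Terms with $s<r$.} These contribute at most
\[
\sum_{s<r} m_s\,\max\Big(2,\frac{2}{\tau(r-s)(r+s+d-3)}\Big).
\]
The constant part gives $\sum_{s\le r} m_s\lesssim r^d$, since $m_s\asymp s^{d-1}$ for fixed $d$. For the $1/\tau$ part, with $d$ fixed I use $R^2\asymp 1$, $\tau\asymp t$, and $r+s+d-3\gtrsim r$. This gives
\[
\frac{1}{tr}\sum_{s<r}\frac{s^{d-1}}{r-s},
\]
which is $\lesssim \log(er)/(tr)$ for $d=1$, $\lesssim \log(er)/t$ for $d=2$, and $\lesssim r^{d-2}\log(er)/t$ in general.

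\emph{Terms with $s>r$.} Here $1/(e^x-1)\le \min(1/x,\,2e^{-x})$ up to constants, and $x\gtrsim t(s-r)(s+r)$. Splitting at $x=1$:
\begin{itemize}
\item For $s-r\lesssim 1/(t(s+r))$, each term is at most $\frac{m_s}{t(s-r)(s+r)}$.
\item Beyond that, the terms are at most $m_s e^{-c t s^2}$, whose sum is $\lesssim \int s^{d-1}e^{-cts^2}\,ds \asymp t^{-d/2}$ (for $d=1,2$ this is $\lesssim t^{-1/2}$ and $1/t$ respectively).
\end{itemize}
For $d=2$ the first range produces
\[
\sum_{s\le C/\sqrt t}\frac{s}{t(s-r)(s+r)}\lesssim \frac{1}{t}\big(\log(er)+\log_+(1/t)\big).
\]
For $d=1$ (where $m_s\le 2$) the first range gives $\log(er)/(tr)$, and the tail term $t^{-1/2}$ is absorbed since $t^{-1/2}\le r+1/(tr)$ by AM–GM. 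For $d\ge 3$ it gives $r^{d-2}\log(er)/t+t^{-d/2}$.

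\emph{Block term.} The gap is the minimum of the neighbouring differences, so $\mu_r/g_r$ is bounded by the $s=r\pm1$ cases above. This gives
\[
m_r\mu_r/g_r\lesssim m_r\big(1+1/(t r)\big),
\]
which fits the same bounds.

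\emph{Large-$d$ regime (iv), $r=2$.} Here $m_2=d+1$ and $x_s=\tau(s-2)(s+d-1)$.
\begin{itemize}
\item The $s=1$ term is $1/(e^{\tau d}-1)\le 1$, using $e^{\tau d}\ge 2$.
\item Since $e^{x}-1\ge e^{x}/2$ when $x\ge\log 2$, the $s\ge3$ terms are bounded by $2\sum_{s\ge3} m_s e^{-\tau(s-2)(s+d-1)}\le 2\sum_{k\ge1} m_{k+2}e^{-k\tau d}$.
\item Using $m_{k+2}\le \binom{d+k+1}{k+1}\lesssim (d+1)^{k+1}/k!$ roughly, this sum is $\lesssim (d+1)\sum_k ((d+1)e^{-\tau d})^k/k!\le (d+1)\exp(2(d+1)e^{-\tau d})$.
\item The block term is $(d+1)\mu_2/g_2$.
\item $\mu_2/g_2=\max\big(1/(e^{\tau d}-1),\,1/(1-e^{-\tau(d+1)})\big)\lesssim 1+1/(\tau d)$, which is $C(1+R^2/(td))$.
\item When $\tau d\ge\log d$, we have $(d+1)e^{-\tau d}\le 2$, so the exponential factor is bounded, $\mathbf{r}_2(\Sigma)\lesssim d$, and $\mu_2/g_2\le C$.
\end{itemize}

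\emph{Main obstacle.} The hard step is the sharp logarithmic bookkeeping near $s\approx r$ in the cases $d=1,2$: obtaining $\log(er)$ and $\log_+(1/t)$ rather than crude powers. The multiplicity bound $m_{k+2}\lesssim (d+1)^{k+1}/k!$ in (iv) also needs care.
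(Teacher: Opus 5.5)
\paragraph{Parts (i)--(iii).} These follow essentially the paper's route: the elementary bounds on $1/(1-e^{-x})$ and $1/(e^x-1)$, the gap identity $(\nu_s-\nu_r)R^2=(s-r)(s+r+d-3)$, harmonic sums near $s=r$, and a Gaussian integral for the tail. Your AM--GM absorption of $t^{-1/2}$ for $d=1$ is a valid alternative to the paper's direct estimate $\sum_{s>2r}1/(ts^2)\lesssim 1/(tr)$.

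One step is misstated, however. For $r<s<2r$ with $x\ge 1$, the terms are \emph{not} bounded by $m_se^{-cts^2}$, because there $x\asymp t(s-r)r$ can be much smaller than $ts^2$. Also, $1/(e^x-1)\le\min(1/x,2e^{-x})$ is false for small $x$; you mean a piecewise bound. Both are repaired as in the paper: use $1/(e^x-1)\le 1/x$ on the whole range $r<s\le 2r$, and keep the exponential bound for $s>2r$, where $\nu_s-\nu_r\gtrsim s^2/R^2$.

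\paragraph{Part (iv): the genuine gap.} It sits exactly at the step you flagged. The bound $m_{k+2}\le\binom{d+k+1}{k+1}\lesssim(d+1)^{k+1}/k!$ is false. The ratio of the two sides is
\[
\frac{1}{k+1}\prod_{i=1}^{k+1}\Bigl(1+\frac{i-1}{d+1}\Bigr),
\]
which is unbounded once $k\gg\sqrt{d\log d}$; for $d=1$ the claim reads $k+2\lesssim 2^{k+1}/k!$. These indices are not negligible. With $x:=e^{-\tau d}$ of order one, the sum $\sum_k m_{k+2}x^k$ is dominated by $k$ of order $d$.

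Concretely, your chain would give $\sum_{k\ge1}m_{k+2}x^k\lesssim(d+1)e^{(d+1)x}$. This fails at $x=1/2$, which the hypothesis $\tau d\ge\log 2$ allows. There the sum equals exactly $3\cdot 2^d-d-3$, exponentially larger than $(d+1)e^{(d+1)/2}$ since $2>\sqrt e$. So the target bound with exponent $2(d+1)e^{-\tau d}$ is true, but your argument does not establish it.

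The missing idea is the exact negative binomial generating function. First, $m_{k+2}\le\binom{d+k+1}{k+1}=\frac{d+k+1}{k+1}\binom{d+k}{k}\le(d+1)\binom{d+k}{k}$. Then
\[
\sum_{k\ge0}\binom{d+k}{k}x^k=(1-x)^{-(d+1)}\le\exp\bigl(2(d+1)x\bigr),\qquad 0\le x\le 1/2,
\]
where the last step uses $\log(1-x)\ge-2x$ on $[0,1/2]$. This is the paper's argument, and it is the source of the factor $2$ in the exponent, which cannot be lowered to $1$.

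\paragraph{Two harmless slips in (iv).} The $s=1$ term is $m_1\mu_1/(\mu_1-\mu_2)=1/(1-e^{-\tau d})\le 2$, not $1/(e^{\tau d}-1)$. Also $\mu_2/(\mu_2-\mu_3)=1/(1-e^{-\tau(d+2)})$, because $(\nu_3-\nu_2)R^2=d+2$. Neither affects your conclusions.
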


The proof of Lemma \ref{lem_RelativeRankForTheHeatKernelInDifferentRegimes} is in Appendix \ref{prf_RelativeRankForTheHeatKernelInDifferentRegimes}. In addition to the relative rank, further eigenvalue expressions arise, which are bounded in the following lemma. The proof is analogous to that of Lemma \ref{lem_RelativeRankForTheHeatKernelInDifferentRegimes} and is therefore omitted.

\begin{lemma}
  \label{lem_EVExpressBound}
  In the above setting, the following bounds hold.
  \begin{enumerate}[label=(\roman*)]
    \item  For \( d \ge 1 \), there is a constant $C>0$ depending only on $d$ such that for any \( r \ge 1 \),
      \begin{align}
         \frac{\mu_{r}}{g_r} \le C \Big( 1 + \frac{1}{tr} \Big).
      \end{align}
    \item 
      For \( d \ge 5 \), there is a constant $C>0$ depending only on $d$ such that  for any \( r \ge 1 \),
      \begin{align}
        \sum_{s\neq r}\frac{m_s\mu_{r,t}\mu_{s,t}}{(\mu_{r,t}-\mu_{s,t})^2} & \le C\Big( r^{d} + \frac{r^{d-3}}{t^2} + t^{-d/2} \Big).
      \end{align}
  \end{enumerate}
\end{lemma}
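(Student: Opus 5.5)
We need to prove Lemma \ref{lem_EVExpressBound}: (i) the bound $\mu_r/g_r \le C(1+1/(tr))$, and (ii) the bound on $\sum_{s\ne r} m_s\mu_r\mu_s/(\mu_r-\mu_s)^2$ for $d\ge5$. The plan is to follow the proof of Lemma \ref{lem_RelativeRankForTheHeatKernelInDifferentRegimes}, working directly with the explicit formulas of Lemma \ref{lem_HeatKernelEigenvaluesAndTheirMultiplicities}. Throughout we use $R^2\asymp d$ (so constants may depend on $d$ and we may absorb $R$), together with $m_s\le C s^{d-1}$ and $\nu_s-\nu_r=(s-r)(s+r+d-3)/R^2$.

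For (i), write
\[
\frac{\mu_r}{\mu_r-\mu_{r\pm1}}=\frac{1}{1-e^{-t|\nu_{r\pm1}-\nu_r|}}\quad(\text{up to swapping roles}).
\]
More carefully, $\mu_{r-1}-\mu_r=\mu_r(e^{t(\nu_r-\nu_{r-1})}-1)\ge \mu_r t(\nu_r-\nu_{r-1})$, and $\mu_r-\mu_{r+1}=\mu_r(1-e^{-t(\nu_{r+1}-\nu_r)})$. The gaps satisfy $\nu_{r+1}-\nu_r\asymp r/R^2$. Using $1/(1-e^{-x})\le 1+1/x$, we get $\mu_r/g_r\le C(1+R^2/(tr))$, which gives (i). For $r=1$ only the upper gap enters.

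For (ii), split the sum into $s<r$ and $s>r$, and write each term as
\[
\frac{m_s}{(e^{x}-1)(1-e^{-x})},\qquad x=t|\nu_s-\nu_r|.
\]
Since $(e^x-1)(1-e^{-x})=4\sinh^2(x/2)$, each term is at most $C m_s\min(x^{-2},e^{-x})$, using $\sinh^2(x/2)\ge \max(x^2/4, ce^{x})$ for $x\ge1$.

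For $s<r$ we have $m_s\le Cr^{d-1}$ and $|\nu_s-\nu_r|\ge (r-s)r/R^2$. This gives a contribution
\[
C\sum_{k\ge1}r^{d-1}\min\big((tkr)^{-2},1\big)\lesssim r^{d-3}/t^2,
\]
and when $tr\gtrsim1$ it is at most $r^{d-1}\sum_k (tkr)^{-2}\lesssim r^{d-3}/t^2$ anyway. Together with a bounded number of near terms this stays within $r^d+r^{d-3}/t^2$.

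For $s>r$ we have $|\nu_s-\nu_r|\ge (s-r)(s+r)/R^2\gtrsim s(s-r)$, and we split further:
\begin{itemize}
\item[(a)] $r<s\le 2r$: here $m_s\le Cr^{d-1}$, and the same argument as above gives $r^{d-3}/t^2$ plus $O(r^{d})$.
\item[(b)] $s>2r$: here $x\gtrsim ts^2$, so the contribution is at most $\sum_{s} s^{d-1}\min((ts^2)^{-2},e^{-cts^2})$.
\end{itemize}
The sum in (b) is the main obstacle. Compare it to an integral: the range $s\le t^{-1/2}$ gives $\sum s^{d-5}/t^2\lesssim t^{-2}\cdot t^{-(d-4)/2}=t^{-d/2}$; this is where $d\ge5$ is needed, since $s^{d-5}$ must sum polynomially without logarithmic or boundary issues. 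The range $s>t^{-1/2}$ has exponential decay and gives $\lesssim t^{-d/2}$ as well. If $t\gtrsim1$, the sum is $O(r^{d})$ or smaller by the exponential tail.

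Collecting the three ranges yields $C(r^d+r^{d-3}/t^2+t^{-d/2})$, which is (ii).
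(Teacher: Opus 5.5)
Your proof is correct and follows the route the paper intends: the paper omits this proof as analogous to that of Lemma~\ref{lem_RelativeRankForTheHeatKernelInDifferentRegimes}, and your argument matches that template (explicit $\nu_s$, $m_s\le Cs^{d-1}$, elementary exponential inequalities, the split $s<r$, $r<s\le 2r$, $s>2r$, and an integral comparison for the tail, where $d\ge 5$ keeps $\sum_{s\le t^{-1/2}}s^{d-5}$ polynomial). One correction: since $e^{-x}<x^{-2}$ for every $x>0$, the pointwise bound $Cm_s\min(x^{-2},e^{-x})$ is literally false for small $x$; the correct statement is $\frac{1}{4\sinh^2(x/2)}\le x^{-2}$ for all $x>0$ and $\le Ce^{-x}$ for $x\ge c$ (compactly, $\le(1+1/x)^2e^{-x}$), and since your range-by-range computations use exactly these bounds, your conclusions stand.
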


Our first corollary deals with the large \( t \) exponential decay regime.

\begin{corollary}[The large $t$ regime for the sphere with fixed dimension]
  \label{cor_SDLargeTFixedD}
  Let \( d \ge 1 \), $r \ge 1$ and $t > 0$ such that $t r^2 \ge \log(er)$. Then there are constants $c_d,C_d >0$ depending only on $d$ such that the following holds. If
  \begin{align}
    \frac{r^{d} \log n}{n} \le c_d
    \qquad \text{ and } \qquad 
    t r \ge 1,
  \end{align}
  then we have
  \begin{align}
        \frac{1}{\sqrt{m_r}} 
        \Big\|\Big( \frac {\widehat{\lambda}_{j, t}-\mu_{r, t}}{\mu_{r, t}}  \Big)_{j \in I_{r}} 
          \Big\|_{2}
    \le 
        C_{d}  \Big( \sqrt{ \frac{r^{d-1} \log n}{n} } + \frac{r^{d} \log n}{n} \Big)
  \end{align}
   and 
  \begin{align}
    \inf_{O \in O(m_r)} 
    \frac{1}{\sqrt{m_r}}\Bigg\| (\widehat v_{j,t})_{j \in I_r}O - \frac{1}{\sqrt{n}}  \begin{pmatrix}
                                                           Y_{r,1}(X_1) &\cdots &  Y_{r,m_r}(X_1)\\
                                                           \vdots  & \ddots & \vdots    \\
                                                           Y_{r,1}(X_n) &\cdots & Y_{r,m_r}(X_n) 
                                                         \end{pmatrix}
    \Bigg\|_2
    \le 
     C_{d}  \sqrt{ \frac{r^{d} \log n}{n} } 
  \end{align}
  with probability at least \( 1 - 1/n^2\).
\end{corollary}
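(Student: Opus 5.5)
The plan is to apply Theorem \ref{thm_BoundsForEigenvaluesAndEigenspaces} with $\gamma = 2$, so that the failure probability is $n^{-2}$, and to bound every quantity on its right-hand side with Lemmas \ref{lem_RelativeRankForTheHeatKernelInDifferentRegimes} and \ref{lem_EVExpressBound}. The hypotheses of the theorem hold in this setting. Assumptions \ref{ass_data}--\ref{ass_Mercer} were checked at the start of Section \ref{ssec_TheSphere}, and the local Weyl law holds with $L=1$ by Lemma \ref{lem_LocalWeylLawForTheSphere}. The constants $c,C$ of the theorem are therefore absolute once $\gamma=2$ and $L=1$ are fixed.

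The first step is to simplify the relative rank under $tr\ge 1$ and $tr^2\ge \log(er)$.
\begin{itemize}
\item For $d\ge 3$, Lemma \ref{lem_RelativeRankForTheHeatKernelInDifferentRegimes}(iii) gives the three terms $r^d$, $r^{d-2}t^{-1}\log(er)$ and $t^{-d/2}$. The second satisfies $r^{d-2}\log(er)/t\le r^{d}$, since this is exactly $tr^2\ge\log(er)$. The third satisfies $t^{-d/2}\le r^{d}$, since $t\ge 1/r\ge r^{-2}$. Hence $\mathbf{r}_r(\Sigma)\le C_d r^d$.
\item For $d=1$, part (i) gives $r+\log(er)/(tr)\le r + r = 2r$, using $\log(er)\le tr^2$.
\item For $d=2$, part (ii) gives $r^2 + t^{-1}(\log(er)+\log_+(1/t))$. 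Here $t^{-1}\log(er)\le r^2$, and $t^{-1}\log_+(1/t)\le r\log r\le r^2$ because $1/t\le r$.
\end{itemize}
Lemma \ref{lem_EVExpressBound}(i) gives $\mu_r/g_r\le C_d(1+1/(tr))\le 2C_d$. The condition $\frac{\mu_r}{g_r}\mathbf{r}_r(\Sigma)\le c\, n/\log n$ then follows from $r^d\log n/n\le c_d$ for $c_d$ small enough.

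The second step treats the eigenvalue bound \eqref{eq:eva:bound:KS}. It equals $C(\sqrt{m_r\log n/n}+\mathbf{r}_r(\Sigma)\log n/n)$. By Lemma \ref{lem_HeatKernelEigenvaluesAndTheirMultiplicities}, $m_r\le C_d r^{d-1}$; this is the degree-$(r-1)$ polynomial count in the formula. Together with $\mathbf{r}_r(\Sigma)\le C_d r^d$, this gives the claimed bound.

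The third step treats the eigenvector bound \eqref{eq:ev:bound:KS}. The second term is bounded by $C\sqrt{r^d\log n/n}$. The third term is bounded by $C r^d\log n/n\le C\sqrt{c_d}\sqrt{r^d\log n/n}$, since $r^d\log n/n\le c_d\le 1$. The first term is $\sum_{s\ne r} m_s\mu_r\mu_s/(\mu_r-\mu_s)^2$, and this is the main obstacle. Lemma \ref{lem_EVExpressBound}(ii) only covers $d\ge 5$, and in that case it gives $r^d+r^{d-3}/t^2+t^{-d/2}\le C r^d$ since $1/t\le r$. To cover all $d$, I would bound the sum directly by the relative rank:
\[
\frac{\mu_r\mu_s}{(\mu_r-\mu_s)^2}=\frac{\mu_s}{|\mu_r-\mu_s|}\cdot\frac{\mu_r}{|\mu_r-\mu_s|}\le \frac{\mu_s}{|\mu_r-\mu_s|}\cdot\frac{\mu_r}{g_r}.
\]
The inequality holds because $|\mu_r-\mu_s|\ge g_r$ for every $s\ne r$. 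This yields
\[
\sum_{s\ne r}\frac{m_s\mu_r\mu_s}{(\mu_r-\mu_s)^2}\le \frac{\mu_r}{g_r}\,\mathbf{r}_r(\Sigma)\le C_d r^d .
\]
This works for every $d$ and makes Lemma \ref{lem_EVExpressBound}(ii) unnecessary. Finally, we identify $\phi_j$, $j\in I_r$, with $Y_{r,1},\dots,Y_{r,m_r}$. Any other orthonormal basis of the eigenspace differs by an orthogonal matrix, which the infimum over $O(m_r)$ absorbs.
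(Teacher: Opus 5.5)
Your proposal is correct and is the paper's argument. The paper applies Theorem \ref{thm_BoundsForEigenvaluesAndEigenspaces} with $\gamma=2$, $L=1$, using $\mathbf{r}_r(\Sigma)\le C_d r^d$ from Lemma \ref{lem_RelativeRankForTheHeatKernelInDifferentRegimes}(i)--(iii), $\mu_r/g_r\le C_d$ from Lemma \ref{lem_EVExpressBound}(i), and $m_r\le C_d r^{d-1}$ (a polynomial of degree $d-1$ in $r$, not degree $r-1$ as you wrote). Your bound $\sum_{s\ne r} m_s\mu_r\mu_s/(\mu_r-\mu_s)^2\le(\mu_r/g_r)\,\mathbf{r}_r(\Sigma)$ only makes explicit a step the paper leaves implicit (it underlies the combined bound stated right after Theorem \ref{thm_BoundsForEigenvaluesAndEigenspaces}), so Lemma \ref{lem_EVExpressBound}(ii) is indeed not needed.
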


\begin{proof}[Proof of Corollary \ref{cor_SDLargeTFixedD}]
    If $t r^2 \ge \log(er)$, then $\mathbf{r}_r(\Sigma)\leq C_dr^d$ by Lemma \ref{lem_RelativeRankForTheHeatKernelInDifferentRegimes}(i)--(iii). If additionally $tr\geq 1$, then $\mu_r/g_r\leq C_d$ by Lemma \ref{lem_EVExpressBound}(i). Finally, we have $m_r\leq C_dr^{d-1}$ by Lemma \ref{lem_HeatKernelEigenvaluesAndTheirMultiplicities}. Inserting these bounds into Theorem \ref{thm_BoundsForEigenvaluesAndEigenspaces}, the claim follows.
\end{proof}

Up to the index $r$ there are $m_1+\dots+m_r\geq cr^d$ eigenvalues. Hence, Corollary \ref{cor_SDLargeTFixedD} allows us to study the leading $cn$ eigenvalues and eigenvectors of kernel matrices. Hence, our results extend the bounds from the prototypical setting of sub-Gaussian distributions and simple eigenvalues of \cite{Wahl2026PertubationExpansionsI} to the kernel-based setting.

We now turn to the small $t$ regime. In this case the leading eigenvalues of the sequence $e^{-t\nu_1},e^{-t\nu_2},\dots$ are all close to one and it makes sense to consider the difference quotient $t^{-1}(I_n-K_t)$ as approximation of $t^{-1}(I-e^{-t\Delta_{\mathbb{S}^{d}(R)}})$. As the latter converges to its generator $\Delta_{\mathbb{S}^{d}(R)}$ as $t\rightarrow 0$, this regime is important in the setting of manifold learning. The following corollary only considers the case $d\geq 3$ and $d\geq 5$. Similar statements hold for $d=1$ and $d=2$.

\begin{corollary}[The small $t$ manifold regime for the sphere with $d\geq 3$]
    \label{cor_SDsmallTFixedD}
  Let $d \ge 3$ and $n \ge 2$.
  Let $r \ge 1$ and $t > 0$ such that \( t r^{2} \log^{2/(d - 2)}(e r) \le 1 \).  Then there are constants $c_d,C_d >0$ depending only on $d$ such that the following holds. If
  \begin{align}
    \frac{\log n}{nt^{d/2+1}r} \le c_d,
  \end{align}
  then we have
    \begin{align}
          \frac{1}{\sqrt{m_{r}}} 
          \Big\| \Big( \frac{1 - \widehat{\lambda}_{j, t}}{t} - \nu_{r} \Big)_{j \in I_{r}} 
          \Big\|_{2}
    \le 
          C_d          \Big( \sqrt{\frac{r^{d-1}\log n}{nt^2}} 
              + \frac{\log n}{nt^{d/2+1}}
        + 
          t\nu_r^2\Big)
  \end{align}
  and, if additionally $d\geq 5$,
  \begin{align}\label{cor_SDsmallTFixedD:ES}
    \inf_{O \in O(m_r)} 
    \frac{1}{\sqrt{m_r}}&\Bigg\| (\widehat v_{j,t})_{j \in I_r} O- \frac{1}{\sqrt{n}}  \begin{pmatrix}
                                                           Y_{r,1}(X_1) &\cdots &  Y_{r,m_r}(X_1)\\
                                                           \vdots  & \ddots & \vdots    \\
                                                           Y_{r,1}(X_n) &\cdots & Y_{r,m_r}(X_n) 
                                                         \end{pmatrix}
    \Bigg\|_2
    \\
    &\le 
    C_d\Big(\sqrt{\frac{\log n}{nt^{d/2}}}+ \sqrt{\frac{r^{d-3}\log n}{nt^2}}+\frac{\log n}{nt^{d/2+1}r}\Big)\notag
  \end{align}
  with probability at least \( 1 - 1/n^2\).
\end{corollary}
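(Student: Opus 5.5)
The plan is to derive both bounds from Theorem \ref{thm_BoundsForEigenvaluesAndEigenspaces} with $\gamma = 2$, following the proof of Corollary \ref{cor_SDLargeTFixedD}. In the small $t$ regime the relevant sizes change, so the first step is to collect the bounds on the spectral quantities.

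For the spectral quantities:
\begin{itemize}
\item Since $tr^2 \le 1$ and $R^2 \asymp d$, we have $t\nu_r = t(r-1)(r+d-2)/R^2 \le C_d$. Hence $\mu_{r,t} \asymp_d 1$.
\item The gap satisfies $g_r \asymp_d t(\nu_{r+1}-\nu_r) \asymp_d tr$. Also $1-\mu_{r,t} \asymp_d t\nu_r$. Consequently $\mu_r/g_r \le C_d/(tr)$, which matches Lemma \ref{lem_EVExpressBound}(i).
\item By Lemma \ref{lem_RelativeRankForTheHeatKernelInDifferentRegimes}(iii), $\mathbf{r}_r(\Sigma) \le C_d(r^d + r^{d-2}t^{-1}\log(er) + t^{-d/2})$. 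The hypothesis $tr^2\log^{2/(d-2)}(er) \le 1$ is exactly what makes $r^{d-2}\log(er)/t \le t^{-d/2}$: raise $r^2\log^{2/(d-2)}(er) \le 1/t$ to the power $(d-2)/2$. It also gives $r^d \le t^{-d/2}$. Therefore $\mathbf{r}_r(\Sigma) \le C_d t^{-d/2}$.
\end{itemize}

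With these bounds the condition of Theorem \ref{thm_BoundsForEigenvaluesAndEigenspaces} reads $(\mu_r/g_r)\mathbf{r}_r(\Sigma)\le C_d/(t^{d/2+1}r) \le c\, n/\log n$. This is the assumed $\log n/(nt^{d/2+1}r)\le c_d$.

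For the eigenvalue bound, start from \eqref{eq:eva:bound:KS} and use $m_r \le C_d r^{d-1}$ (Lemma \ref{lem_HeatKernelEigenvaluesAndTheirMultiplicities}). Then $\frac{1}{\sqrt{m_r}}\|(\widehat\lambda_j-\mu_r)_{j\in I_r}\|_2 \le C_d\mu_r\big(\sqrt{r^{d-1}\log n/n} + t^{-d/2}\log n/n\big)$. Dividing by $t$ gives the first two terms of the claim. Next decompose
\[
\frac{1-\widehat\lambda_{j,t}}{t}-\nu_r = \frac{\mu_{r,t}-\widehat\lambda_{j,t}}{t} + \Big(\frac{1-e^{-t\nu_r}}{t}-\nu_r\Big).
\]
The deterministic bias is bounded by $|1-e^{-x}-x|\le x^2/2$ with $x = t\nu_r$, which gives $t\nu_r^2/2$. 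Its contribution to the normalized $\ell_2$ norm is at most $t\nu_r^2$. This yields the first inequality.

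For the eigenvector bound, insert into \eqref{eq:ev:bound:KS} Lemma \ref{lem_EVExpressBound}(ii), valid for $d\ge5$: $\sum_{s\ne r} m_s\mu_r\mu_s/(\mu_r-\mu_s)^2 \le C_d(r^d + r^{d-3}t^{-2} + t^{-d/2}) \le C_d(r^{d-3}t^{-2}+t^{-d/2})$, again using $r^d\le t^{-d/2}$. The remaining terms are controlled as before. The term $\sqrt{\mathbf{r}_r\log n/n}$ is at most $C_d\sqrt{\log n/(nt^{d/2})}$, and the last term is at most $C_d\log n/(nt^{d/2+1}r)$. Together these give \eqref{cor_SDsmallTFixedD:ES}.

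The main obstacle is the bookkeeping of the small-$t$ asymptotics: checking that $g_r\asymp tr$ uniformly when $t\nu_r$ is bounded, and that the hypothesis on $t r^2\log^{2/(d-2)}(er)$ absorbs the middle term of the relative rank. This step depends on the elementary estimate $1-e^{-x}\asymp x$ for $x\le C_d$, with constants depending on $d$.
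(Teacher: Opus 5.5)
Your proposal is correct and follows the paper's proof essentially verbatim. You apply Theorem \ref{thm_BoundsForEigenvaluesAndEigenspaces} with $\gamma=2$ and $L=1$, insert $\mathbf{r}_r(\Sigma)\le C_d t^{-d/2}$, $\mu_r/g_r\le C_d/(tr)$, $m_r\le C_d r^{d-1}$ and Lemma \ref{lem_EVExpressBound}(ii), and finish with the Taylor bound $|(1-e^{-t\nu_r})/t-\nu_r|\le t\nu_r^2/2$. You also check explicitly that the hypothesis $tr^2\log^{2/(d-2)}(er)\le 1$ absorbs both $r^d$ and $r^{d-2}\log(er)/t$ into $t^{-d/2}$, and that $\mu_r/g_r\le C_d/(tr)$ follows from Lemma \ref{lem_EVExpressBound}(i) because $tr\le 1$; the paper leaves these steps implicit.
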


Corollary \ref{cor_SDsmallTFixedD} generalizes and explains spectral convergence rates from manifold learning in several directions. Firstly, the leading term in the eigenvector bound provides the state-of-the-art dependence on $t$ as derived in \cite{MR4452681,TLV25}, and provides a better dependence on $t$ in the eigenvalue bound. However, the above result only gives the stochastic term of the problem. An additional bias term comes from the fact that in the kernel matrix the true heat kernel has to be replaced by e.g.~a Gaussian kernel. Secondly, our bounds are non-asymptotic and in particular explicit in the potentially large eigenvalue index $r$ and the multiplicity $m_r$. Finally, let us mention that the above leading order term in the eigenvector result also appears in a lower bound for principal component analysis, provided that one replaces the Karhunen-Loève coefficients in Lemma \ref{lem:KL} by independent standard Gaussian random variables \cite{MR4647373}.

\begin{proof}
  If \( t r^{2} \log^{2/(d - 2)}(e r) \le 1 \) and $d\geq 3$, then $\mathbf{r}_r(\Sigma)\leq C_dt^{-d/2}$ and $\mu_r/g_r\leq C_d/(rt)$ by Lemma \ref{lem_RelativeRankForTheHeatKernelInDifferentRegimes}(iii),(iv). Moreover, we have $m_r\leq C_dr^{d-1}$ by Lemma \ref{lem_HeatKernelEigenvaluesAndTheirMultiplicities}. Inserting these bounds and Lemma \ref{lem_EVExpressBound}(i),(ii) into Theorem \ref{thm_BoundsForEigenvaluesAndEigenspaces}, we get \eqref{cor_SDsmallTFixedD:ES} and 
  \begin{align}
      \frac{1}{\sqrt{m_{r}}} 
        \Big\| \Big( \frac{1 - \widehat{\lambda}_{j, t}}{t} -  \frac{1 - e^{-t\nu_r}}{t}\Big)_{j \in I_{r}} 
        \Big\|_{2}
  \le C_d\frac{1}{t}\Big( \sqrt{\frac{r^{d-1}\log n}{n}} 
            + \frac{\log n}{nt^{d/2}}\Big)
  \end{align}
  with probability at least \( 1 - 1/n^2\). 
  Hence, the claim follows from
  \begin{align}
      \Big|\frac{1 - e^{-t\nu_r}}{t} -\nu_r\Big|\leq \frac{t\nu_r^2}{2},
  \end{align}
  which is standard Taylor remainder bound for the exponential function.
\end{proof}

Finally, we turn to the large \( d \) regime. Since the multiplicities depend on $d$, interesting dimension effects arise here. As an illustration, we only consider the case \( r = 2 \), in which case $I_2=\{2,\dots,d+2\}$ with $m_2=d+1$ and the eigenfunctions $\phi_2,\dots,\phi_{d+2}$ are the normalized $d+1$ different coordinate projections. The following corollary requires $t$ to be larger than a constant times $\log d$. Otherwise the relative rank has an exponential dependence on $d$.

\begin{corollary}[The large $ d $ regime for the sphere with $r=2$]
  Let $n\geq 2$, $ d \ge 1$, and $t > 0$ be such that \( t d / R^2 \ge \log d \).
  Then there are absolute constants $c, C >0$ such that if $d \le c n / \log n$, then we have
  \begin{align}
      \frac{1}{\sqrt{d + 1}}
      \Big(\sum_{j = 2}^{d + 2} \Big(\frac{\widehat{\lambda}_{j, t}-\mu_{2, t}}{\mu_{2, t}} \Big)^2 \Big)^{1/2}
  \le 
      C  \sqrt{\frac{d \log n}{n}}
  \end{align}
  and consequently
  \begin{align}
      \Big| \frac{1}{d + 1} \sum_{j = 2}^{d+2} \frac{\widehat{\lambda}_{j, t}-\mu_{2, t}}{\mu_{2, t}}  \Big|
  \le 
      C \sqrt{\frac{d \log n}{n}},
  \end{align}
  as well as
  \begin{align}
     \inf_{O \in O(d+1)} 
  \frac{1}{\sqrt{d+1}}\Bigg\| (\widehat v_{j,t})_{j\in I_2}O - \frac{1}{\sqrt{n}}  \begin{pmatrix}
                                                         Y_{2,1}(X_1) & \cdots & Y_{2,d+1}(X_1) \\ 
                                                         \vdots   & \ddots & \vdots   \\
                                                         Y_{2,1}(X_n) & \cdots & Y_{2,d+1}(X_n)
                                                       \end{pmatrix} 
  \Bigg\|_2
  \le 
  C \sqrt{ \frac{d\log n}{n}
                            } 
  \end{align}
  with probability at least $1 - 1/n^{2}$.
\end{corollary}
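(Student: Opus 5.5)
The plan is to apply Theorem \ref{thm_BoundsForEigenvaluesAndEigenspaces} with $r=2$ and $\gamma=2$, in the same way as the proofs of Corollaries \ref{cor_SDLargeTFixedD} and \ref{cor_SDsmallTFixedD}. The needed spectral quantities come from Lemma \ref{lem_RelativeRankForTheHeatKernelInDifferentRegimes}(iv). Since $td/R^2\ge \log d$, that lemma gives $\mathbf{r}_2(\Sigma)\le Cd$ and $\mu_2/g_2\le C$.

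For $d=1$ the lemma's standing assumption $td/R^2\ge\log 2$ is not implied by $td/R^2\ge\log d=0$. I will handle this case separately: either invoke the large-$t$ fixed-$d$ bound, or note that for $d=1$ all constants are absolute. I expect this to be a minor technicality. The multiplicity is $m_2=d+1$ by Lemma \ref{lem_HeatKernelEigenvaluesAndTheirMultiplicities}, and the local Weyl law holds with $L=1$ by Lemma \ref{lem_LocalWeylLawForTheSphere}. Hence the constants $c,C$ from Theorem \ref{thm_BoundsForEigenvaluesAndEigenspaces} are absolute.

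Next I check the theorem's condition: $(\mu_2/g_2)\,\mathbf{r}_2(\Sigma)\le C'd\le c\,n/\log n$. This holds once $d\le c''n/\log n$ with a small enough absolute $c''$.

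For the eigenvalues, the first inequality of the theorem gives the bound
\[
C\Big(\sqrt{(d+1)\log n/n}+Cd\log n/n\Big).
\]
Because $d\log n/n\le c''\le 1$, the second term is at most $\sqrt{d\log n/n}$ times a constant. This yields the first displayed bound. The averaged bound then follows from Cauchy--Schwarz:
\[
\Big|\tfrac{1}{m}\sum_j a_j\Big|\le \tfrac{1}{\sqrt m}\,\|a\|_2 .
\]

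For the eigenvectors I will use the combined simplified form stated after Theorem \ref{thm_BoundsForEigenvaluesAndEigenspaces}. This is valid because its first term satisfies
\[
\sum_{s\ne 2}\frac{m_s\mu_2\mu_s}{(\mu_2-\mu_s)^2}\le \frac{\mu_2}{g_2}\,\mathbf{r}_2(\Sigma),
\]
using $|\mu_2-\mu_s|\ge g_2$. The combined form gives $C\sqrt{(\mu_2/g_2)\,\mathbf{r}_2(\Sigma)\log n/n}\le C\sqrt{d\log n/n}$. Finally, I identify $\phi_j$ for $j\in I_2$ with $Y_{2,1},\dots,Y_{2,d+1}$, absorbing any basis change into the infimum over $O(d+1)$. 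All of this holds with probability at least $1-n^{-2}$.

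The main obstacle is simply confirming that the hypotheses of Lemma \ref{lem_RelativeRankForTheHeatKernelInDifferentRegimes}(iv) are satisfied. That lemma carries the real work, and it is assumed here.
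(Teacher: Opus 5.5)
For $d\ge 2$ your argument is the paper's proof. Lemma~\ref{lem_RelativeRankForTheHeatKernelInDifferentRegimes}(iv) gives $\mathbf{r}_2(\Sigma)\le Cd$ and $\mu_2/g_2\le C$, and these bounds, together with $m_2=d+1$ and $L=1$, go into Theorem~\ref{thm_BoundsForEigenvaluesAndEigenspaces} with $\gamma=2$. The details you add are correct: Cauchy--Schwarz for the averaged bound, the estimate $\sum_{s\neq 2} m_s\mu_2\mu_s/(\mu_2-\mu_s)^2\le(\mu_2/g_2)\,\mathbf{r}_2(\Sigma)$, and absorbing the change of eigenbasis into the infimum over $O(d+1)$.

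\textbf{The gap is $d=1$, and it is not a minor technicality.} For $d=1$ the hypothesis $td/R^2\ge\log d=0$ gives no lower bound on $t$. Both of your patches need one.
\begin{itemize}
\item Corollary~\ref{cor_SDLargeTFixedD} requires $tr\ge 1$ and $tr^2\ge\log(er)$, which are exactly such lower bounds.
\item Absoluteness of the constants does not help, because the degeneration is in $t$, not in $d$. Already $\mu_2/g_2\ge 1/(e^{t/R^2}-1)$ and $\mathbf{r}_2(\Sigma)\ge 1/(1-e^{-t/R^2})$ blow up as $t\to 0$.
\end{itemize}

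In fact the claim is false for $d=1$, so no argument can close the gap. Fix $n$ with $n/\log n\ge 1/c$ and let $t\to 0$.
\begin{itemize}
\item All diagonal entries of $K_t$ equal $n^{-1}k_t(x,x)=n^{-1}\sum_r m_r e^{-t\nu_r}$, which tends to $\infty$.
\item The off-diagonal entries $n^{-1}k_t(X_i,X_j)$ tend to $0$ almost surely, since the heat kernel vanishes off the diagonal as $t\to 0$ and $X_i\neq X_j$ a.s.
\item By Weyl's inequality, $\widehat\lambda_{2,t}$ and $\widehat\lambda_{3,t}$ then tend to $\infty$ almost surely, while $\mu_{2,t}=e^{-t/R^2}\to 1$.
\end{itemize}
So the left-hand side of the first display diverges almost surely, and the bound cannot hold with probability $1-n^{-2}$.

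The paper's one-line proof has the same blind spot. Lemma~\ref{lem_RelativeRankForTheHeatKernelInDifferentRegimes}(iv) assumes $td/R^2\ge\log 2$, which follows from $td/R^2\ge\log d$ only when $d\ge 2$. The right repair is to assume $d\ge 2$, or $td/R^2\ge\max(\log d,\log 2)$. Under that hypothesis your proof is complete.
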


\begin{proof}
    If \( t d / R^2 \ge \log d \), then $\mathbf{r}_2(\Sigma)\leq Cd$ and $\mu_2/g_2\leq C$ by Lemma \ref{lem_RelativeRankForTheHeatKernelInDifferentRegimes}(v). Moreover, we have $m_r=d+1$ by Lemma \ref{lem_HeatKernelEigenvaluesAndTheirMultiplicities}. Inserting these bounds and $I_2=\{2,\dots,d+2\}$ into Theorem \ref{thm_BoundsForEigenvaluesAndEigenspaces}, the claim follows.
\end{proof}


\subsection{Example 2: Truncated wavelet priors}
\label{ssec_TruncatedWaveletPriors}

In Bayesian non-parametrics Gaussian process priors play an important role, see, e.g., \citet{GhosalvdVaart2017FundamentalsOfBayes}.
Here we consider a truncated wavelet prior on $[0,1]$
\begin{align}
  \label{eq_Expls_WaveletPrior}
  W & = \sum_{r = 0}^{R} 
           \sum_{j = 0}^{2^{r} - 1} \sqrt{\mu_{r}} Z_{rj} \psi_{rj}.
\end{align}
where \( Z_{r j}  \), \( r, j \ge 0 \) are independent standard Gaussian random variables and \( (\psi_{r, j})_{r \ge 0, 0 \le j \le} \\ _{2^{r} - 1} \) is an orthonormal basis of \( L^{2}([0, 1]) \) of continuous, compactly supported wavelets with \( r \) referring to the resolution level and \( j \) to the dilation.
Such a basis can be chosen such that for numerical constant \( 0 < c \le C < \infty \), 
\begin{align}
  c 2^{r / 2} \le \| \psi_{r j} \|_{\infty} \le C 2^{r/2}
  \qquad \text{ for all } r \ge 0, 0 \le j \le 2^{r} - 1, 
\end{align}
stemming from the  dilation of a mother wavelet, and such that groupwise, the functions satisfy a bounded overlap property, i.e., there exists a constant \( N \in \mathbb{N} \) such for all \( r \ge 0 \) and \( x \in [0, 1] \)
\begin{align}
  \label{eq_Expls_BoundedOverlap}
  | \{ j: \psi_{r j}(x) \ne 0  \}| \le N,
\end{align}
see Chapter E.3 in \cite{GhosalvdVaart2017FundamentalsOfBayes} or \cite{CohenEtal1993WaveletsOnTheInterval} and \cite{GineNickl2016Foundations}.
The covariance kernel of the Gaussian process \( W \) is then given by
\begin{align}
  k(x, y) = \sum_{r = 0}^{R} \sum_{j = 0}^{2^{r} - 1} \mu_{r} \psi_{r j}(x) \psi_{r j}(y), 
  \qquad x, y \in [0, 1].
\end{align}
We note that our arguments rely essentially only on the existence of a Mercer representation of the kernel \( k \).
Consequently, one could equally well work with the standard Haar wavelets on the unit interval, which admit a more explicit representation but are not continuous. 
To preserve the clean framework of Section \ref{sec_SpectralConcentrationForKernelMatrices}, however, we adopt the construction of \cite{CohenEtal1993WaveletsOnTheInterval}.

The wavelet prior example is motivated by the application in \cite{StankewitzSzabo24}, which considers a Gaussian process regression based on data
\begin{align}
  Y_{i} = f(X_{i}) + \varepsilon_{i}, \qquad i = 1, \dots, n
\end{align}
with, e.g., \( \varepsilon_{i} \sim N(0, \sigma^{2}) \), \( X_{i} \sim \text{Unif}[0, 1] \) i.i.d. and \( f \) endowed with a centered Gaussian process prior with a kernel \( k \) that satisfies a Mercer expansion as in Lemma \ref{lem_Mercer} for $ \mathcal{X}=[0, 1] $.
\cite{StankewitzSzabo24} then derives minimax optimal contraction rates for an approximate posterior process in which the computationally expensive inversion of the \( n \times n \)-matrix \( n K + \sigma^{2} I_n \) in the true posterior GP with mean and covariance function
\begin{align}
  x       & \mapsto k(x, X)^{\top} (n K + \sigma^{2} I_n)^{-1} Y,
  \qquad \qquad \qquad \qquad \qquad \ \ x \in [0, 1] \\
  (x, x') & \mapsto k(x, x') - k(x, X)^{\top} (n K + \sigma^{2} I_n)^{-1} k(x', X),
  \qquad x, x' \in [0, 1]
  \notag 
\end{align}
is replaced by \( m = m(n) \ll n \) steps of a conjugate-gradient iteration.
The derivation of these results crucially depends on the numerical stability of \( K \), which is expressed in terms of spectral concentration for its singular value decomposition.
Importantly, this actually requires relative bound of the form
\begin{align}
  \label{eq_Expls_UniformControlOverSVD}
  \max_{1\leq j\leq m}\frac{|\widehat{\lambda}_{j} - \lambda_{j}|}{\lambda_{j}} = o(1)
\end{align}
with high probability. However, so far such results have largely been available under the more restrictive assumptions that the eigenvalues \( \lambda_{1} > \lambda_{2} > \dots \) are simple and that the eigenfunctions satisfy a uniform boundedness condition of the form $\sup_{j \ge 1} \mathbb{E} |\phi_{j}(X)|^{p} < \infty$ for \( p > 4 \) sufficiently large.
While some extensions to settings with multiple eigenvalues have been considered, these have not yet been developed in the same level of generality or detail.

In the wavelet prior from Equation \eqref{eq_Expls_WaveletPrior}, for any \( r = 0, 1, \dots R \), the multiplicity of \( \mu_{r} \) is \( m_{r} = 2^{r}  \).
Anticipating that the truncation level \( R \) will have to be chosen depending on the sample size \( n \), we won't have access to a uniform \( p \)-th moment bound either as \( \| \psi_{r j} \|_{\infty} \ge c 2^{r/2} \).
Still, our results from Section \ref{ssec_MainResults} are applicable in this more complicated setting.
Indeed, for a sample \( X_{1}, \dots, X_{n} \) of i.i.d.~\(\text{Unif} [0, 1]\)-distributed random variables, Assumptions \ref{ass_data}--\ref{ass_Mercer} are satisfied for the wavelet kernel and because of the bounded overlap property in Equation \eqref{eq_Expls_BoundedOverlap} the wavelet kernel also satisfies Assumption \ref{ass_LocalWeylLaw}, as can be seen as follows 
\begin{align}
        \sum_{j = 0}^{2^{r} - 1} \psi_{r j}(x)^{2} 
  & \le N \sup_{j \in I_{r}} \| \psi_{r j} \|_{\infty}^{2} 
    \le N C 2^{r}.
\end{align}

In order to model the smoothness of the prior, following the exposition in \cite{GhosalvdVaart2017FundamentalsOfBayes}, we consider eigenvalues \( \mu_{r} = 2^{-r (2 \alpha - 1)} \), \( r \ge 0 \), for some \( \alpha > 1/2 \) and \( R = R_{\alpha} \) such that \( 2^{R_{\alpha}} \) is of the size of the classical non-parametric truncation level \( n^{1 / (2 \alpha + 1)} \), i.e. 
\begin{align}
  R_{\alpha} = \big\lceil \log_{2} n^{1 / (2 \alpha + 1)} \big\rceil.
\end{align}
In this setting, we obtain the following bounds for the relative rank quantities.

\begin{lemma}[Relative rank for wavelet priors]
  \label{lem_RelativeRankForWaveletPriors}
  For any fixed \( r = 0, 1, \dots, R_{\alpha} \), we have
  \begin{align}
        \mathbf{r}_{r}(\Sigma) 
                    \le C \begin{cases}
                            2^{(2-2\alpha) R_{\alpha}} 2^{(2\alpha - 1) r}, & \alpha \in (1/2, 1), \\
                            (R_{\alpha} - r + 1) 2^{r},                     & \alpha =         1, \\
                            2^{r},                                          & \alpha >         1,
                          \end{cases}
  \end{align}
  and \(\mu_{r}/g_{r} \le C\) with a constant \( C > 0 \) depending only on \( \alpha \).  
\end{lemma}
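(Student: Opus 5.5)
The plan is a direct computation from the definition of $\mathbf{r}_r(\Sigma)$, using $m_s = 2^s$ and $\mu_s = 2^{-s(2\alpha-1)}$ for $s = 0,\dots,R_\alpha$. Set $\beta := 2\alpha - 1 > 0$, so that $\mu_s = 2^{-\beta s}$ is strictly decreasing in $s$.

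\emph{Step 1: the gap term.} For $s < r$ we have $\mu_{r-1} - \mu_r = \mu_r(2^{\beta}-1)$. For $s > r$ we have $\mu_r - \mu_{r+1} = \mu_r(1 - 2^{-\beta})$. Hence $g_r \ge c_\beta \mu_r$ with $c_\beta = \min(2^\beta - 1, 1 - 2^{-\beta}) = 1 - 2^{-\beta} > 0$. This gives $\mu_r/g_r \le C$ directly. At the endpoints $r = 0$ and $r = R_\alpha$ only one neighbour exists. There I would treat the missing eigenvalue as $0$, so that $\mu_{R_\alpha} - 0 = \mu_{R_\alpha}$, or else take the minimum over the existing neighbours. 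Either convention gives the same bound. It follows that the diagonal term $m_r\mu_r/g_r \le C 2^r$.

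\emph{Step 2: the sum over $s < r$.} Here $\mu_s - \mu_r \ge \mu_s(1 - 2^{-\beta})$. Each summand is therefore at most $C m_s = C 2^s$, and summing the geometric series gives at most $C 2^r$.

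\emph{Step 3: the sum over $s > r$, where the case distinction arises.} Here $\mu_r - \mu_s \ge \mu_r(1 - 2^{-\beta})$. Each summand is therefore at most $C\, 2^s \mu_s/\mu_r = C\, 2^{\beta r}\, 2^{(1-\beta)s}$. Note that $1 - \beta = 2 - 2\alpha$, and consider the sum $\sum_{s=r+1}^{R_\alpha} 2^{(2-2\alpha)s}$.
\begin{itemize}
\item If $\alpha > 1$, the exponent is negative. The series is dominated by its first term, so the sum is at most $C\,2^{(2-2\alpha)r}$ and the total is $C 2^r$.
\item If $\alpha = 1$, each term equals $1$. The sum is $R_\alpha - r$, giving a total of $(R_\alpha - r)2^r$.
\item If $\alpha \in (1/2,1)$, the exponent is positive. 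The series is dominated by its last term, so the sum is at most $C\,2^{(2-2\alpha)R_\alpha}$ and the total is $C\,2^{(2-2\alpha)R_\alpha}2^{(2\alpha-1)r}$.
\end{itemize}

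\emph{Step 4: combining.} Adding Steps 1--3, the contributions of order $2^r$ are absorbed in each case. For $\alpha < 1$ this uses $2^{(2-2\alpha)R_\alpha}2^{(2\alpha-1)r} \ge 2^{(2-2\alpha)r}2^{(2\alpha-1)r} = 2^r$, because $r \le R_\alpha$. For $\alpha = 1$ the factor $R_\alpha - r + 1 \ge 1$ absorbs the $2^r$ terms. All constants depend only on $\beta$, i.e.\ only on $\alpha$.

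I expect no real obstacle. The only points needing care are the endpoint convention for $g_r$ and the bookkeeping in the three geometric-series regimes.
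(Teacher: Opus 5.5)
Your proposal is correct and follows essentially the same route as the paper. You bound $\mu_r/g_r$ using the fixed ratio $2^{2\alpha-1}$ between consecutive eigenvalues, split $\mathbf{r}_r(\Sigma)$ into the sum over $s<r$ (at most $C2^r$), the diagonal term (at most $C2^r$) and the sum over $s>r$ (at most $C2^r\sum_{s=r+1}^{R_\alpha}2^{(2-2\alpha)(s-r)}$), and finish with the three-case geometric-series analysis. You also spell out two details the paper leaves implicit: the endpoint gaps at $r=0$ and $r=R_\alpha$, and the absorption of the $C2^r$ contributions, which is why the $\alpha=1$ case carries the factor $R_\alpha-r+1$.
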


\noindent Its derivation is in Section \ref{prf_RelativeRankForWaveletPriors}.
An application of the eigenvalue statement in Theorem \ref{thm_BoundsForEigenvaluesAndEigenspaces} then leads to the following bounds.

\begin{corollary}[Individual eigenvalues of truncated wavelet priors]
  \label{cor_IndividualEigenvaluesOfTruncatedWaveletPriors}
  Let \( \alpha > 1/2 \).
  Then for any \( r = 0, 1, \dots, R_{\alpha} \), the empirical eigenvalues and eigenvectors corresponding to \( \mu_{r} \) satisfy
  \begin{align}
       \frac{1}{2^{r/2}}  
       \Big\| \Big( \frac{\widehat{\lambda}_{j} - \mu_{r}}{\mu_{r}} \Big)_{j \in I_{r}} \Big\|_{2}  
    \le 
        C \sqrt{\frac{\log n}{n^{2 \alpha / (2 \alpha + 1)}}}
    \end{align}
    and
    \begin{align}
    \inf_{O \in O(2^r)} 
    \frac{1}{2^{r/2}} \bigg\| (\widehat{v}_{j})_{j \in I_{r}} O -\frac{1}{\sqrt{n}} \begin{pmatrix}
                                                                    \psi_{r j}(X_{1}) \\
                                                                    \vdots            \\
                                                                    \psi_{r j}(X_{n}) \\
                                                                  \end{pmatrix}_{j \in I_{r}}
                      \bigg\|_{2}
    \le 
    C \sqrt{\frac{\log n}{n^{2 \alpha / (2 \alpha + 1)}}}.
  \end{align}
  with probability at least \( 1 - n^{-2} \).
\end{corollary}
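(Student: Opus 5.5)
The plan is to insert the bounds of Lemma \ref{lem_RelativeRankForWaveletPriors} into Theorem \ref{thm_BoundsForEigenvaluesAndEigenspaces} with $\gamma=2$, as in the sphere corollaries. The wavelet kernel satisfies Assumptions \ref{ass_data}--\ref{ass_LocalWeylLaw} by the discussion above, with a constant $L$ of order $NC^2$. The multiplicities are $m_r=2^r$ with $2^r\le 2^{R_\alpha}\le 2n^{1/(2\alpha+1)}$, and Lemma \ref{lem_RelativeRankForWaveletPriors} gives $\mu_r/g_r\le C$. Since the prior is truncated at level $R_\alpha$, the kernel has only finitely many nonzero eigenvalues. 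For $r=R_\alpha$ the gap is $g_r=\mu_{R_\alpha-1}-\mu_{R_\alpha}$, or $\mu_{R_\alpha}$ itself if one regards $0$ as the next eigenvalue. In both cases the ratio is bounded, since $\mu_{r-1}/\mu_r=2^{2\alpha-1}>1$.

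The key step is a uniform bound $\mathbf{r}_r(\Sigma)\le C 2^{R_\alpha}\le C n^{1/(2\alpha+1)}$ for every $r\le R_\alpha$, up to a logarithmic factor only when $\alpha=1$. I would check the three cases of Lemma \ref{lem_RelativeRankForWaveletPriors} separately.
\begin{itemize}
\item For $\alpha>1$, we have $2^r\le 2^{R_\alpha}$ directly.
\item For $\alpha\in(1/2,1)$, we have $2^{(2-2\alpha)R_\alpha}2^{(2\alpha-1)r}\le 2^{R_\alpha}$.
\item For $\alpha=1$, we have $(R_\alpha-r+1)2^r\le C2^{R_\alpha}$, because $x2^{-x}$ is bounded. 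Indeed $(R_\alpha-r+1)2^{r-R_\alpha}\le \sup_{k\ge0}(k+1)2^{-k}=1$, so in fact no log loss occurs here either.
\end{itemize}
The condition of Theorem \ref{thm_BoundsForEigenvaluesAndEigenspaces} then reads $C n^{1/(2\alpha+1)}\log n\le c n$. This holds for $n$ large because $1/(2\alpha+1)<1/2$. Small $n$ can be absorbed by enlarging $C$, since the left-hand sides are trivially bounded: eigenvector differences have norm at most a constant times $\sqrt{m_r}$ on the event that the Weyl bound holds, and relative eigenvalue errors can be handled similarly.

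For the eigenvalue bound \eqref{eq:eva:bound:KS}, the right-hand side is at most
\[
C\Big(\sqrt{2^{R_\alpha}\log n/n}+2^{R_\alpha}\log n/n\Big)\le C\sqrt{n^{1/(2\alpha+1)}\log n/n}=C\sqrt{\log n/n^{2\alpha/(2\alpha+1)}},
\]
where the second term is dominated by the first once $2^{R_\alpha}\log n\le n$.

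For the eigenvector bound \eqref{eq:ev:bound:KS}, the middle and last terms are controlled the same way using $\mu_r/g_r\le C$. The first term needs $\sum_{s\ne r} m_s\mu_r\mu_s/(\mu_r-\mu_s)^2\le C\,\mathbf{r}_r(\Sigma)$. This follows from $\mu_r/|\mu_r-\mu_s|\le \mu_r/g_r\le C$ for $s\neq r$, since consecutive eigenvalues differ by a fixed factor $2^{2\alpha-1}$ and so every $|\mu_r-\mu_s|\ge g_r$. Combining these gives the stated bound with probability $1-n^{-2}$.

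The main obstacle is only bookkeeping: confirming the uniform relative rank bound at the endpoint $r=R_\alpha$ and in the $\alpha\in(1/2,1)$ case, and handling the convention for the gap at the last level.
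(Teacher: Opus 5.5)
Your proposal is correct and follows the paper's proof. The paper likewise uses Lemma~\ref{lem_RelativeRankForWaveletPriors} to bound $\frac{\mu_r}{g_r}\mathbf{r}_r(\Sigma)\frac{\log n}{n}\le C2^{R_\alpha}\frac{\log n}{n}\le C n^{-2\alpha/(2\alpha+1)}\log n$ uniformly in $r\le R_\alpha$, and then applies Theorem~\ref{thm_BoundsForEigenvaluesAndEigenspaces} with $\gamma=2$ for $n$ sufficiently large. Your extra bookkeeping only makes explicit what the paper leaves implicit: the case-by-case check that $\mathbf{r}_r(\Sigma)\le C2^{R_\alpha}$ with no logarithmic loss at $\alpha=1$, bounding the first eigenvector term by $\frac{\mu_r}{g_r}\mathbf{r}_r(\Sigma)$, the gap convention at $r=R_\alpha$, and absorbing small $n$ into $C$.
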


\begin{proof}[Proof]
  For any \( \alpha > 1/2 \) and \( r = 0, \dots, R_{\alpha} \), Lemma \ref{lem_RelativeRankForWaveletPriors} implies that
  \begin{align}
    \frac{\mu_{r}}{g_{r}} \mathbf{r}_{r}(\Sigma) \frac{\log n}{n}
    & \le 
    C 2^{J_{\alpha}} \frac{\log n}{n} 
    \le 
    C n^{1 / (2 \alpha + 1)} \frac{\log n}{n} 
    \le 
    C n^{-(2 \alpha) / (2 \alpha + 1)} \log n,
  \end{align}
  which is bounded by an arbitrarily small constant \( c > 0 \) for \( n \) sufficiently large.
  The statements now follow from Theorem \ref{thm_BoundsForEigenvaluesAndEigenspaces} with \( \gamma = 2 \).
\end{proof}

Motivated by the application in \cite{StankewitzSzabo24} and the requirement in Equation \eqref{eq_Expls_UniformControlOverSVD}, we also formulate a result for the eigenvalues showing that for any \( \alpha > 1/2 \), the techniques from Section \ref{sec_SpectralConcentrationForKernelMatrices} guarantee uniform control along the whole singular value decomposition of the truncated wavelet prior.

\begin{corollary}[Uniform control for truncated wavelet priors]
  \label{cor_UniformControlForTruncatedWaveletPriors}
  In the situation above, for any \( \alpha > 1/2 \),
  \begin{align}
        \max_{0 \le r \le R_{\alpha}} 
        \max_{j \in I_{r}} \frac{| \widehat{\lambda}_{j} - \mu_{r} |}{\mu_{r}} 
    \le 
        C \sqrt{n^{-\frac{2 \alpha - 1}{2 \alpha + 1}} \log n} 
  \end{align}
  with probability at least \( 1 - n^{-1} \).
\end{corollary}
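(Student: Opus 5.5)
The plan is to apply the eigenvalue bound \eqref{eq:eva:bound:KS} of Theorem \ref{thm_BoundsForEigenvaluesAndEigenspaces} to each block $r=0,1,\dots,R_\alpha$ separately, with a slightly larger exponent $\gamma$, and then take a union bound over the $R_\alpha+1 = O(\log n)$ blocks. Since the maximum over $j\in I_r$ is bounded by the $\ell_2$-norm over $j\in I_r$, we will use
\begin{align}
  \max_{j\in I_r}\frac{|\widehat\lambda_j-\mu_r|}{\mu_r}
  \le \sqrt{m_r}\cdot\frac{1}{\sqrt{m_r}}\Big\|\Big(\frac{\widehat\lambda_j-\mu_r}{\mu_r}\Big)_{j\in I_r}\Big\|_2
  \le C\Big( m_r\sqrt{\frac{\log n}{n}} + \sqrt{m_r}\,\mathbf{r}_r(\Sigma)\frac{\log n}{n}\Big),
\end{align}
with $m_r=2^r\le 2^{R_\alpha}\le 2n^{1/(2\alpha+1)}$. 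The condition of the theorem, $\frac{\mu_r}{g_r}\mathbf{r}_r(\Sigma)\le c\,n/\log n$, is verified exactly as in the proof of Corollary \ref{cor_IndividualEigenvaluesOfTruncatedWaveletPriors}, using Lemma \ref{lem_RelativeRankForWaveletPriors}, for $n$ large; for small $n$ the claim holds trivially after enlarging $C$.

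The key computation is to bound the right-hand side uniformly in $r\le R_\alpha$. For the first term, $m_r\sqrt{\log n/n}\le 2n^{1/(2\alpha+1)-1/2}\sqrt{\log n}=2\sqrt{n^{-(2\alpha-1)/(2\alpha+1)}\log n}$, which is exactly the target rate. That this is the worst case is consistent with the rate in the statement. For the second term, Lemma \ref{lem_RelativeRankForWaveletPriors} gives three cases.
\begin{itemize}
  \item For $\alpha>1$, $\sqrt{m_r}\,\mathbf{r}_r\lesssim 2^{3r/2}\le n^{3/(2(2\alpha+1))}$, so the term is at most $n^{3/(2(2\alpha+1))-1}\log n$. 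Squaring and comparing exponents with the first term, this is dominated by the first term.
  \item For $\alpha=1$, an extra factor $R_\alpha\lesssim\log n$ appears. It is absorbed because $n^{3/(2(2\alpha+1))-1}\log^2 n$ is still much smaller than $n^{1/(2\alpha+1)-1/2}$, owing to a strictly smaller polynomial exponent.
  \item For $\alpha\in(1/2,1)$, $\sqrt{m_r}\,\mathbf{r}_r\lesssim 2^{(2-2\alpha)R_\alpha}2^{(2\alpha-1/2)r}\le 2^{3R_\alpha/2}$. This gives the same bound as in the case $\alpha>1$.
\end{itemize}
In all three cases the second term is at most $n^{3/(2(2\alpha+1))-1}\log n\le \sqrt{n^{-(2\alpha-1)/(2\alpha+1)}\log n}$. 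The exponent check is $3/(2\alpha+1)-2\le 1/(2\alpha+1)-1/2$ after doubling, i.e.\ $2/(2\alpha+1)\le 3/2$, which holds since $\alpha>1/2$ gives $2/(2\alpha+1)<1$. The logarithmic factors are handled by the strict polynomial slack.

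For the probability, apply Theorem \ref{thm_BoundsForEigenvaluesAndEigenspaces} with $\gamma=2$ for each $r$. The union bound over $R_\alpha+1\le C\log n$ blocks gives failure probability at most $C\log n\cdot n^{-2}\le n^{-1}$ for $n$ large; small $n$ is again absorbed into the constant.

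The main obstacle is the bookkeeping of exponents in the three regimes of $\alpha$. In particular, one has to check that the conversion from the $\ell_2$ bound to the max costs a factor $\sqrt{m_r}$ rather than $m_r$ in the second term, and that this does not destroy the rate. If it did, one would instead have to track the max directly through Proposition \ref{prp_DeterministicBounds}, using $\|PEP\|_\infty$ bounds from the operator Bernstein inequality.
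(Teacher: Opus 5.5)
Your proposal is correct and follows the paper's route: apply the per-block eigenvalue bound with $\gamma=2$, bound the maximum over $I_r$ by the $\ell_2$-norm at the cost of a factor $\sqrt{m_r}\le 2^{R_\alpha/2}\lesssim n^{1/(2(2\alpha+1))}$, and take a union bound over the $R_\alpha+1=O(\log n)$ blocks. The paper simply plugs in Corollary~\ref{cor_IndividualEigenvaluesOfTruncatedWaveletPriors}, whose per-block rate already absorbs the $\mathbf{r}_r(\Sigma)$-term since $\mathbf{r}_r(\Sigma)\le C2^{R_\alpha}$ in all three regimes, so your case analysis in $\alpha$ is redundant but harmless. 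One small slip: your displayed exponent check compares a doubled exponent with an undoubled one, so the inequality you verify is weaker than what is needed; the correct comparison $\frac{3}{2(2\alpha+1)}-1<\frac{1}{2\alpha+1}-\frac12$ reduces to $-\frac{\alpha}{2\alpha+1}<0$, so your conclusion stands.
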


\begin{proof}[Proof]
  This follows simply by estimating the maximum difference over the eigenblock against the \( \ell^{2} \)-norm of the differences, using the worst case bound from Corollary \ref{cor_IndividualEigenvaluesOfTruncatedWaveletPriors} and accounting for the added multiplicity \( m_{r} = 2^{r} \) on both sides. 
  Finally the uniformity in \( r = 0, \dots, R_{\alpha} \) follows with a union bound.
\end{proof}



\section{Proofs of the probabilistic estimates}
\label{ssec_ProofsOfTheProbabilisticEstimates}

In this section, we provide a detailed proof of Proposition \ref{prp_BoundsOnTheLinearAndQuadraticTerm} using moment inequalities for U-statistics. 
Here, Assumption \ref{ass_LocalWeylLaw} on the growth behavior of the eigenfunctions are instrumental in deriving probabilistic bounds on the pertubation terms. The following arguments are also able to cover other assumptions that are frequently satisfied by the \( (\phi_{j})_{j \ge 1} \) as discussed in Section \ref{ssec_ProblemSetup}.

\begin{proof}[\normalfont \textbf{Proof of Proposition \ref{prp_BoundsOnTheLinearAndQuadraticTerm}} (Probabilistic bounds)]
  \label{prf_BoundsOnTheLinearAndQuadraticTerm}
  Inserting Lemma~\ref{lem:KL}, the term \( \| P_{r} E P_{r} \|_{2}^2 \) can we written as
    \begin{align}
        \| P_{r} E P_{r} \|_{2}^{2} 
    &= 
        \mu_{r}^{2} \sum_{ j, k \in I_{r} } 
                    \Big( \frac{1}{n} \sum_{ i = 1 }^{n} 
                          \phi_j(X_i) \phi_k(X_i) - \delta_{j k}
                    \Big)^{2}\\
    &= \frac{1}{n^2}\sum_{i=1}^n \mu_r^2\sum_{ j, k \in I_{r} } (\phi_j(X_i) \phi_k(X_i) - \delta_{j k})^2\\
    &+\frac{2}{n^2}\sum_{1\leq i<i^\prime\leq n} \mu_r^2\sum_{ j, k \in I_{r} }(\phi_j(X_i) \phi_k(X_i) - \delta_{j k})(\phi_j(X_{i^\prime}) \phi_k(X_{i^\prime}) - \delta_{j k}).
  \end{align}
  We introduce the function
  \begin{align}
      h(x,y)=\mu_r^2\sum_{ j, k \in I_{r} }(\phi_j(x) \phi_k(x) - \delta_{j k})(\phi_j(y) \phi_k(y) - \delta_{j k}),\qquad x,y\in\mathcal{X}.
  \end{align}
  Then, $h$ is a symmetric and degenerate kernel (meaning that $h(x,y)=h(y,x)$ for all $x,y\in\mathcal{X}$ and $\mathbb{E}h(X,y)=0$ for all $y\in\mathcal{X}$) with
  \begin{align}\label{eq:bound:kernel}
      \sup_{x,y\in\mathcal{X}}|h(x,y)|\leq \mu_r^2\sup_{x\in\mathcal{X}}\sum_{ j, k \in I_{r} }(\phi_j(x) \phi_k(x) - \delta_{j k})^2\leq 2L^2m_r^2\mu_r^2,
  \end{align}
  as can be seen from the Cauchy-Schwarz inequality and Assumption \ref{ass_LocalWeylLaw}. We now have 
  \begin{align}
      \| P_{r} E P_{r} \|_{2}^{2}&=\frac{1}{n^2}\sum_{i=1}^n h(X_i,X_i)+\frac{2}{n^2}\sum_{1\leq i<i^\prime\leq n} h(X_i,X_{i^\prime}).
  \end{align}
  By \eqref{eq:bound:kernel}, we have
  \begin{align}\label{eq:HD:1}
      0\leq \frac{1}{n^2}\sum_{i=1}^n h(X_i,X_i)\leq 2L^2\frac{m_r^2\mu_r^2}{n}\qquad a.s.
  \end{align}
  Moreover, by symmetrization (see \cite{SCK19} or \cite[Theorem 3.1]{PG99} for a result with slightly worse constants) and the Bonami inequality (\cite[Theorem 3.22]{PG99}), we have 
    \begin{align}
        \forall p\geq 1,\qquad&\mathbb{E}^{1/p}\Big|\frac{1}{\binom{n}{2}^{1/2}}\sum_{1\leq i<i^\prime\leq n}h(X_{i},X_{i^\prime})\Big|^p\\
        &\leq 4\mathbb{E}^{1/p}\Big|\frac{1}{\binom{n}{2}^{1/2}}\sum_{1\leq i<i^\prime\leq n}\epsilon_{i}\epsilon_{i^\prime}h(X_{i},X_{i^\prime})\Big|^p\\
        &\leq 4(p-1)\mathbb{E}^{1/p}\Big|\frac{1}{\binom{n}{2}}\sum_{1\leq i<i^\prime\leq n}h(X_{i},X_{i^\prime})^2\Big|^{p/2}.
    \end{align}
    Inserting \eqref{eq:bound:kernel}, we get
    \begin{align}
       \forall p\geq 1,\qquad\mathbb{E}^{1/p}\Big|\frac{1}{\binom{n}{2}^{1/2}}\sum_{1\leq i<i^\prime\leq n}h(X_{i},X_{i^\prime})\Big|^p\leq 8L^2m_r^2\mu_r^2p,
    \end{align}
    meaning that
    \begin{align}\label{eq:HD:2}
       \Big\|\frac{2}{n^2}\sum_{1\leq i<i^\prime\leq n}h(X_{i},X_{i^\prime})\Big\|_{\psi_1}\leq 8\sqrt{2}L^2\frac{m_r^2\mu_r^2}{n}.
    \end{align}
    Combining \eqref{eq:HD:1} and \eqref{eq:HD:2}, we arrive at
    \begin{align}
        \big\|\| P_{r} E P_{r} \|_{2}^2\big\|_{\psi_1}&\leq  16\sqrt{2}L^2\frac{m_r^2\mu_r^2}{n}.
    \end{align}
    This gives claim (i). Inserting again Lemma~\ref{lem:KL}, the second term \( \| P_{r} E | R_{r} |^{1/2} \|_{2}^2 \) can we written as
    \begin{align}
        \| P_{r} E | R_{r} |^{1/2} \|_{2}^2
    &= 
        \mu_{r}\sum_{s\neq r}\frac{\mu_s}{|\mu_r-\mu_s|} \sum_{ j\in I_{r}, k \in I_{s}  } 
                    \Big( \frac{1}{n} \sum_{ i = 1 }^{n} 
                          \phi_j(X_i) \phi_k(X_i)
                    \Big)^{2}\\
    &= \frac{1}{n^2}\sum_{i=1}^n \mu_{r}\sum_{s\neq r}\frac{\mu_s}{|\mu_r-\mu_s|} \sum_{ j\in I_{r}, k \in I_{s}  } \phi_j(X_i)^2 \phi_k(X_i)^2
    \notag
    \\
    &+\frac{2}{n^2}\sum_{1\leq i<i^\prime\leq n} \mu_{r}\sum_{s\neq r}\frac{\mu_s}{|\mu_r-\mu_s|} \sum_{ j\in I_{r}, k \in I_{s} }\phi_j(X_i) \phi_k(X_i)\phi_j(X_{i^\prime}) \phi_k(X_{i^\prime}).
    \notag 
  \end{align}
  This time, we introduce the function
  \begin{align}
      h(x,y)=\mu_{r}\sum_{s\neq r}\frac{\mu_s}{|\mu_r-\mu_s|} \sum_{ j\in I_{r}, k \in I_{s}  } \phi_j(x)\phi_k(x) \phi_j(y) \phi_k(y),\qquad x,y\in\mathcal{X},
  \end{align}
  which is again a symmetric and degenerate kernel satisfying
  \begin{align}
      \sup_{x,y\in\mathcal{X}}|h(x,y)|\leq  L^2m_r\mu_r\sum_{s\neq r}\frac{m_s\mu_s}{|\mu_r-\mu_s|},
  \end{align}
  as can be seen from the Cauchy-Schwarz inequality and Assumption \ref{ass_LocalWeylLaw}. Using the decomposition
  \begin{align}
      \| P_{r} E | R_{r} |^{1/2} \|_{2}^2&=\frac{1}{n^2}\sum_{i=1}^n h(X_i,X_i)+\frac{2}{n^2}\sum_{1\leq i<i^\prime\leq n} h(X_i,X_{i^\prime}),
  \end{align}
  the claim (ii) now follows from proceeding as in the case of the first term. Claim (iii) follows analogous to claim (ii).
\end{proof}

Beyond the bounds on the perturbation terms, we also need a probabilistic estimate for \( \delta_{r} \) to employ our perturbation bounds in the setting of Section \ref{sec_SpectralConcentrationForKernelMatrices}.
Here, we formulate the following non-asymptotic bound.

\begin{proposition}[Non-asymptotic bound on \( \delta_{r} \)]
  \label{prp_NonAsymptoticBoundOnDelta_r}
  Assume that Assumptions \ref{ass_data}--\ref{ass_LocalWeylLaw} hold. Then, for any \(\gamma > 0 \), we have
  \begin{align}
          \delta_{r} 
    & \le 
          \sqrt{4 \gamma L \frac{\mu_{r}}{g_{r}} \mathbf{r}_r(\Sigma) \frac{\log n}{n} } 
        + 
          \frac{ 4\gamma }{ 3 } L \mathbf{r}_{r}( \Sigma ) \frac{ \log n }{n}
  \end{align}
  with probability at least \( 1 - 4 \mathbf{r}_{r}( \Sigma ) n^{- \gamma} \).
  In particular, \( \delta_{r}' < 1 / 4 \) with probability at least \( 1 - 4 \mathbf{r}_{r}( \Sigma ) n^{- \gamma} \) if \( L \gamma \mu_{r} / g_{r}  \mathbf{r}_r(\Sigma) \le c n / \log n \) for some sufficiently small constant $c > 0$.
\end{proposition}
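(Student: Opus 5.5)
We bound $\delta_r$ (presumably the first entry $\||R_r|^{1/2}E|R_r|^{1/2}\|_\infty$, or the maximum of the three rescaled quantities in $\delta_r'$) with a matrix Bernstein inequality for sums of independent self-adjoint operators, in the intrinsic-dimension (effective rank) form. The key device is a single weighting operator that controls all three terms at once. Define
\begin{align*}
  T := |R_r| + g_r^{-1}P_r = \sum_{s\neq r}\frac{1}{|\mu_s-\mu_r|}P_s + \frac{1}{g_r}P_r ,
\end{align*}
and set $Z_i := T^{1/2}\big(k(X_i,\cdot)\otimes k(X_i,\cdot)\big)T^{1/2}$. Then $T^{1/2}ET^{1/2} = n^{-1}\sum_i (Z_i - \mathbb{E}Z_i)$. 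Each of the three operators appearing in $\delta_r'$ is a compression of $T^{1/2}ET^{1/2}$ by projections commuting with $T$, so each has operator norm at most $\|T^{1/2}ET^{1/2}\|_\infty$.

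\textbf{Step 1: the uniform bound.} $Z_i$ has rank one and norm
\begin{align*}
  \|T^{1/2}k(X_i,\cdot)\|_{\mathcal{H}}^2 = \sum_{s}t_s\,\mu_s\sum_{j\in I_s}\phi_j(X_i)^2 \le L\sum_s t_s m_s\mu_s = L\,\mathbf{r}_r(\Sigma),
\end{align*}
where $t_s$ is the eigenvalue of $T$ on block $s$. This uses Lemma~\ref{lem:KL} and the local Weyl law in Assumption~\ref{ass_LocalWeylLaw}. The same bound holds for $\mathbb{E}Z_i = T^{1/2}\Sigma T^{1/2}$, whose trace equals $\mathbf{r}_r(\Sigma)$. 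Hence $\|Z_i-\mathbb{E}Z_i\|_\infty \le L\,\mathbf{r}_r(\Sigma)$, which will produce the linear term $\tfrac{4\gamma}{3}L\mathbf{r}_r\log n/n$.

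\textbf{Step 2: the variance.} Using $Z_i^2 = \|T^{1/2}k(X_i,\cdot)\|^2 Z_i \preceq L\,\mathbf{r}_r(\Sigma) Z_i$, we get
\begin{align*}
  \mathbb{E}(Z_i-\mathbb{E}Z_i)^2 \preceq \mathbb{E}Z_i^2 \preceq L\,\mathbf{r}_r(\Sigma)\, T^{1/2}\Sigma T^{1/2} =: V .
\end{align*}
Here $\|T^{1/2}\Sigma T^{1/2}\|_\infty = \max_s t_s\mu_s$, which is at most of order $\mu_r/g_r$. For $s\neq r$ we have $\mu_s/|\mu_s-\mu_r| \le 1+\mu_r/g_r$, and $\mu_r/g_r \ge 1$ whenever $\mu_{r+1}\ge 0$. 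So $\|V\|_\infty \le 2L(\mu_r/g_r)\mathbf{r}_r(\Sigma)$, up to tracking constants. The intrinsic dimension is $\operatorname{tr}(V)/\|V\|_\infty \le \mathbf{r}_r(\Sigma)\big/\max_s t_s\mu_s$. This is at most of order $\mathbf{r}_r(\Sigma)$, and that factor gives the $4\mathbf{r}_r(\Sigma)$ prefactor in the failure probability.

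\textbf{Step 3: conclusion.} Apply the intrinsic-dimension Bernstein inequality (Tropp / Minsker version for Hilbert-space operators) at level $u$ chosen so that the exponential factor equals $n^{-\gamma}$, i.e.\ $u = \sqrt{2\sigma^2\gamma\log n/n} + \tfrac{2}{3}\gamma L\mathbf{r}_r\log n/n$. This yields the claimed bound with probability $1-4\mathbf{r}_r(\Sigma)n^{-\gamma}$. The condition on $u$ relative to $\sigma$ required by the intrinsic-dimension version holds in the relevant regime and can be forced otherwise.

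The ``in particular'' statement follows from the compression remark. Each term of $\delta_r'$ is at most $\|T^{1/2}ET^{1/2}\|_\infty$, since $g_r^{-1/2}|R_r|^{1/2}EP_r$ is precisely an off-diagonal block of $T^{1/2}ET^{1/2}$. Under $L\gamma(\mu_r/g_r)\mathbf{r}_r\le cn/\log n$, both terms of the bound are at most $C\sqrt c$. Moreover $\mathbf{r}_r \le (\mu_r/g_r)\mathbf{r}_r$, so the linear term is at most $Cc$. Choosing $c$ small makes the total less than $1/4$.

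\textbf{Main obstacle.} The step needing the most care is the variance computation. We must check $\max_s t_s\mu_s \lesssim \mu_r/g_r$, including the smallest eigenvalues, where $\mu_s/(\mu_r-\mu_s)$ is small. We must also ensure that the intrinsic-dimension form applies in infinite dimensions with the exact constants $4$, $4\gamma$ and $4/3$. If those constants fail to match, we would instead apply Bernstein separately to the three blocks, with weight operators $|R_r|$, $|R_r|^{1/2}$ paired with $g_r^{-1/2}P_r$, and $g_r^{-1}P_r$, accepting slightly different constants.
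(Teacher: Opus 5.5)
Your proposal is correct and is essentially the paper's own proof. The paper defines $\delta_r=\|T^{1/2}ET^{1/2}\|_\infty$ with exactly your $T^{1/2}=|R_r|^{1/2}+g_r^{-1/2}P_r$, sets $X'=T^{1/2}k(X,\cdot)$, bounds $\|X'\|_{\mathcal{H}}^2\le L\mathbf{r}_r(\Sigma)$ via the local Weyl law, controls the variance through $\mathbb{E}\|X'\|^2X'\otimes X'$ using the same estimate $\mu_{r-1}/(\mu_{r-1}-\mu_r)\le 2\mu_r/g_r$, and then applies an intrinsic-dimension operator Bernstein inequality with $D=\mathbf{r}_r(\Sigma)$; your sharper uniform bound $L\mathbf{r}_r(\Sigma)$ (the paper uses $2L\mathbf{r}_r(\Sigma)$) and your Loewner majorant $V=L\mathbf{r}_r(\Sigma)\,T^{1/2}\Sigma T^{1/2}$ for the intrinsic dimension are slightly cleaner bookkeeping but only affect constants.
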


\begin{proof}
  [\normalfont \textbf{Proof of Proposition \ref{prp_NonAsymptoticBoundOnDelta_r}}
   (Non-asymptotic bound on \( \delta_{r} \))]
  \label{prf_BoundOnDelta_r}
  The claim follows from (operator) Bernstein inequality. We set
  \begin{align}
         X': 
    & =   
         ( | R_{r} |^{ 1 / 2 } + g_{r}^{-1/2} P_{r} ) k(X, \cdot) 
    \\
    & =   
          \sum_{s\neq  r}\sum_{ j \in I_{s} }
          \sqrt{ \frac{ \mu_{s} }{ | \mu_{s} - \mu_{r} | } } 
          \phi_j(X) u_{j} 
        + 
          \sum_{ j \in I_{r} }\sqrt{ \frac{ \mu_{r} }{ g_{r} } } 
          \phi_j(X) u_{j}.
    \notag
  \end{align}
  By Assumption \ref{ass_LocalWeylLaw}, we
  immediately obtain
  \begin{align}
          \| X' \|_{\mathcal{H}_k}^{2}&=\sum_{s \neq r}
          \frac{ \mu_{s} }{ | \mu_{s} - \mu_{r} | }
          \sum_{ j \in I_{s} }
          \phi_j^2(X)+\frac{ \mu_{r} }{ g_{r} }  
          \sum_{ j \in I_{r} }
          \phi_j^2(X)
    \\
    & \le 
          L\sum_{s \neq r}
          \frac{ m_s\mu_{s} }{ | \mu_{s} - \mu_{r} | }+L\frac{m_r \mu_{r}}{g_{r}}
      =   
          L \mathbf{r}_{r}(\Sigma).
    \notag
  \end{align}
  For \( \xi: = X' \otimes X' - \mathbb{E} ( X' \otimes X' ) \), this yields
  \begin{align}
    \label{eq_XiOperatorBounds}
            \| \xi \|_\infty 
    & \le
            \| X' \|_{\mathcal{H}_k}^{2} + \mathbb{E}\| X' \|_{\mathcal{H}_k}^{2}
      \le
            2 L \mathbf{ r }_{ r }( \Sigma ),
    \\
            \| \mathbb{E} \xi^{2} \|_{ \infty } 
    & \le 
            \| \mathbb{E} ( X' \otimes X' )^{2} \|_{ \infty }
    = 
            \| \mathbb{E} \| X' \|_{\mathcal{H}_k}^{2} ( X' \otimes X' ) \|_{ \infty }
    \le
            L \Big( \frac{\mu_{r - 1}}{\mu_{r - 1} - \mu_{r}} \lor \frac{\mu_{r}}{g_{r}} \Big) \mathbf{r}_{r}(\Sigma)
    \notag
    \\
    & \le   2 L \frac{\mu_{r}}{g_{r}} \mathbf{r}_{r}(\Sigma), 
    \notag
  \end{align}
  since \( \mu_{r - 1} / (\mu_{r - 1} - \mu_{r}) \le 1 + \mu_{r} / (\mu_{r - 1} - \mu_{r}) \le 2 \mu_{r} / g_{r}\).
  Finally,
  \begin{align}
    \label{eq_XiTraceBound}
          \tr \mathbb{E} \xi^{2} 
    & \le
          \tr \mathbb{E} ( X' \otimes X' )^{2} 
    = 
          \mathbb{E}\| X' \|_{\mathcal{H}_k}^{4}\notag
    \le 
          L \mathbf{r}_{r}(\Sigma) \mathbb{E}\| X' \|_{\mathcal{H}_k}^{2}\\
    & =
          L \mathbf{r}_{r}(\Sigma)
          \Big( 
                \sum_{s < r}
                \frac{ m_{ s } \mu_{s} }{ \mu_{s} - \mu_{r} }
              + 
                \frac{ m_{r} \mu_{r} }{ g_{r} }
              +
                \sum_{s > r} 
                \frac{ m_{ s } \mu_{s} }{ \mu_{r} - \mu_{s} }
          \Big)
    \le 
          L \mathbf{r}_{r}(\Sigma)^{2}.
  \notag
  \end{align}
  Writing \( \delta_{r} = n^{-1} \sum_{ i = 1 }^{ n } \xi_{ i } \) with \( \xi_{1}, \dots, \xi_n \)
  i.i.d. copies of \( \xi \) and applying Lemma 5 from \cite{MR3629418} with, in the notation there,
  \begin{align}
    R: = 2 L \mathbf{ r }_{ r }( \Sigma ), 
    \qquad 
    V: = 2 n L \frac{\mu_{r}}{g_{r}} \mathbf{r}_{r}(\Sigma)
    \qquad \text{and} \qquad 
    D: = \mathbf{r}_{r}( \Sigma ) 
  \end{align}
  finishes the proof.
\end{proof}


\section{Higher-order spectral perturbation bounds}
\label{sec_HigherOrderSpectralPerturbationBounds}

In this section, we present the spectral perturbation bounds that are at the basis of our main results in the kernel setting. We follow the strategy in \cite{Wahl2026PertubationExpansionsI}, which in itself extends a line of research \cite{MR3300524,MR3573302,MR4102689,JirakWahl2018PerturbationBoundsUnderRelativeGap,JirakWahl2023RelativePerturbationBounds}. 
Although this part of our proof follows the steps of \cite{Wahl2026PertubationExpansionsI}, we need several modifications to account for multiplicities and the associated fact that individual empirical eigenvalues no longer admit a perturbation series in this case. 

\subsection{Notation}
\label{ssec_Notation}

The results presented here are more general than the kernel setting.
However, we keep a notation that is suggestive of Section \ref{sec_SpectralConcentrationForKernelMatrices}.
Consider therefore two general positive, self-adjoint, compact operators \( \Sigma \) and \( \widehat{\Sigma} \) on a separable Hilbert space \( \mathcal{H} \).
We consider $\widehat \Sigma$ as an approximation of $\Sigma$ and define the perturbation operator $E = \widehat \Sigma - \Sigma$ so that $\widehat \Sigma = \Sigma + E$.
By the spectral theorem, there is a sequence $ \lambda_1 \ge \lambda_2 \ge \dots $ of eigenvalues of $\Sigma$ (converging to zero), together with an orthonormal system of eigenvectors $u_1, u_2, \dots$ such that $\Sigma = \sum_{j\geq 1}\lambda_j u_j \otimes u_j$. 
Similarly, there exists a sequence $ \widehat{\lambda}_1 \ge \widehat{\lambda}_2 \ge \dots $ of eigenvalues of $\widehat \Sigma$, together with an orthonormal system of eigenvectors $ \widehat{u}_1, \widehat{u}_2, \dots $ such that $\widehat{\Sigma} = \sum_{j \ge 1} \widehat{\lambda}_j \widehat u_j \otimes \widehat u_j$.

For notational convenience, we assume that \( (u_{j})_{j \ge 1} \) and \( (\widehat{u}_{j})_{j \ge 1} \) are orthonormal bases of \( \mathcal{H} \). This allows us to cover the kernel setting from Section \ref{sec_SpectralConcentrationForKernelMatrices}, as can be seen as follows. First, this is already true for \( (u_{j})_{j \ge 1} \), as the reproducing Hilbert space $\mathcal{H}$ in \eqref{eq:RKHS} only includes the eigenvectors corresponding to the positive eigenvalues of $L_k$ and $\Sigma$. Moreover, in Section \ref{sec_SpectralConcentrationForKernelMatrices}, the empirical covariance operator $\widehat{\Sigma}$ is a finite rank operator, so that the finite orthonormal system of eigenvectors corresponding to the non-zero eigenvalues can be extended to an orthonormal basis of $\mathcal{H}$.

As before, we consider the non-increasing sequence \( (\mu_{r})_{r \ge 1} \) of distinct eigenvalues of \( \Sigma \), \( I_{r} = \{ j \ge 1: \lambda_{j} = \mu_{r} \} \) and \( m_{r}=|I_r| \) the multiplicity of \( \mu_{r} \). For any \( r \ge 1 \), we abbreviate \( P_{r} = \sum_{j \in I_{r}} u_{j} \otimes u_{j} \) and \( \widehat{P}_{r} = \sum_{j \in I_{r}} \widehat{u}_{j} \otimes \widehat{u}_{j} \).

For $r\geq 1$, we introduce the eigengap  
\begin{align} g_{r} = (\mu_{r - 1} - \mu_{r}) \land (\mu_{r} - \mu_{r + 1})\end{align} 
and
\begin{align}
  \delta_{r} &: = \| ( | R_{r} |^{ 1 / 2 } + g_{r}^{ - 1 / 2 } P_{r} ) E 
                     ( | R_{r} |^{ 1 / 2 } + g_{r}^{ - 1 / 2 } P_{r} )
                  \|_{ \infty }, 
  \\
  \delta_{r}' &: =              \| |R_{r}|^{1/2} E |R_{r}|^{1/2} \|_{\infty}
              \lor g_{r}^{-1/2} \| |R_{r}|^{1/2} E  P_{r}        \|_{\infty}
              \lor g_{r}^{-1}   \|  P_{r}        E  P_{r}        \|_{\infty},
  \notag
\end{align}
where the perturbation \( E \) is weighted based on the reduced resolvent
\begin{align}
  R_{r}: = \sum_{ s \ne r } ( \mu_{s} - \mu_{r} )^{-1} P_{s}.
\end{align}
By elementary norm inequalites we have $\delta_r\leq 2\delta_r^\prime$ and  $\delta_r^\prime\leq \delta_r$, as well as $\delta_r\leq \|E\|_\infty/g_r$.


\subsection{Perturbation bounds for eigenvalues and spectral projectors}
\label{ssec_PerturbationSeriesForSpectralProjectors}

In \cite{Wahl2026PertubationExpansionsI} it has been shown that if $\lambda_j=\mu_r$ is a simple eigenvalue with $m_r = 1$, then the \( j \)-th perturbed eigenvalue \( \widehat{\lambda}_{j} \) and its corresponding eigenprojector \( \widehat{u}_{j} \otimes \widehat{u}_{j} \) admit Taylor expansions in the perturbation \( E \) provided that $\delta_r < 1/2$ (or $\delta_r' < 1/4$). This itself extends the classical perturbation bound stating that the \( j \)-th perturbed eigenvalue \( \widehat{\lambda}_{j} \) and its corresponding eigenprojector \( \widehat{u}_{j} \otimes \widehat{u}_{j} \) admit Taylor expansions in the perturbation \( E \) provided that \( \| E \|_{\infty} < g_r / 2 \), see, e.g.,~Theorem 3.9 in \cite{MR1335452}. The goal of this section is to show that extensions of \cite{Wahl2026PertubationExpansionsI} exist in the case of  multiple eigenvalues. This requires a new statement for the expansions of eigenvalues, since the \( \widehat{\lambda}_{j} \), \( j \in I_{r} \) now do not individually possess expansions anymore. 

For $r\geq 1$, we introduce the Taylor coefficients 
\begin{align}
  P_{r}^{ (n) }: = ( - 1 )^{ n + 1 } 
                   \sum_{ \substack{ k_{1},  \dots,  k_{ n + 1 } \ge 0 \\
                                     k_{1} + \dots + k_{ n + 1 } =   n
                                   }
                        }
                        R_{r}^{ ( k_{1} ) } E \dots E R_{r}^{ ( k_{ n + 1 } ) }, 
\end{align}
where \( R_{r}^{ ( 0 ) } = - P_{r} \) and
\( R_{r}^{ ( k ) } = R_{r}^{ k } \) for \( k \ge 1 \).
For instance, we have
\begin{align}
  P_{r}^{(0)} & = P_{r}, \qquad 
  P_{r}^{(1)}   = - P_{r} E R_{r} - R_{r} E P_{r} \qquad \text{ and } \qquad  \\
  \notag
  P_{r}^{(2)} & = P_r E R_r E R_r + R_r E P_r E R_r + R_r E R_r E P_r - R_r^2 E  P_r E P_r \\
              & - P_r E R_r^2 E P_r - P_r E P_r E R_r^2.
  \notag
\end{align}
The following first main result presents a projector perturbation series in the case of multiple eigenvalues. 
The proof of Theorem \ref{thm_ProjectorPerturbationSeries} is presented in Section \ref{sec_ProofsOfThePerturbationBounds}.

\begin{theorem}[Projector perturbation series]
  \label{thm_ProjectorPerturbationSeries}
  Let \( \delta_{r}' < 1 / 4 \). 
  Then, the series of Hilbert-Schmidt operators 
  $ \sum_{n = 0}^{\infty} P_{r}^{ (n) } $ converges absolutely and for any
  integer \( p \ge 1 \), 
  \begin{align}
        \| \widehat{P}_{r} - \sum_{n = 0}^{p - 1} P_{r}^{ (n) } \|_{2} 
    \le 
        4 g_{r}^{-1/2} 
        \| | R_{r} |^{1/2} E P_{r} \|_{2} 
        \frac{ ( 4 \delta_{r}' )^{p - 1} }{ (1 - 4 \delta_{r}')^{2} }.
  \end{align}
  In particular, it follows that \( \widehat{P}_{r} = \sum_{n = 0}^{\infty} P_{r}^{ (n) } \).
\end{theorem}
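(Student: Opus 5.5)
We prove Theorem \ref{thm_ProjectorPerturbationSeries} through the Riesz contour representation of $\widehat P_r$ combined with a weighted Neumann series. Let $\mathcal{C}_r$ be the circle in the complex plane centred at $\mu_r$ with radius $g_r/2$. First we show, via a Weyl-type argument, that exactly $m_r$ eigenvalues of $\widehat\Sigma$ lie inside $\mathcal{C}_r$ and none lie on it. Then
\[
\widehat P_r=-\frac{1}{2\pi i}\oint_{\mathcal{C}_r}(\widehat\Sigma-z)^{-1}\,dz .
\]

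\textbf{Step 1 (weighted resolvent).} Set $R(z)=(\Sigma-z)^{-1}=\sum_s(\mu_s-z)^{-1}P_s$ and introduce the weight $W=|R_r|^{1/2}+g_r^{-1/2}P_r$, which is invertible on $\mathcal{H}$. Write
\[
\widehat\Sigma-z=W^{-1}\bigl(W(\Sigma-z)W+WEW\bigr)W^{-1}.
\]
For $z\in\mathcal{C}_r$ the operator $W(\Sigma-z)W$ is diagonal. On $P_s$ with $s\ne r$ its entry is $(\mu_s-z)/|\mu_s-\mu_r|$, whose modulus is at least $1/2$ because $|z-\mu_r|=g_r/2\le|\mu_s-\mu_r|/2$. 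On $P_r$ its entry is $(\mu_r-z)/g_r$, of modulus exactly $1/2$. Hence $\|(W(\Sigma-z)W)^{-1}\|_\infty\le 2$, and $\|WEW\|_\infty=\delta_r\le2\delta_r'<1/2$. Therefore
\[
(\widehat\Sigma-z)^{-1}=\sum_{k\ge0}R(z)\bigl(-ER(z)\bigr)^k,
\]
and the series converges in the weighted norm with ratio $2\delta_r\le4\delta_r'<1$. The same bound holds for $\Sigma+tE$ with $t\in[0,1]$, so no eigenvalue of $\Sigma+tE$ crosses $\mathcal{C}_r$. By continuity in $t$ the Riesz projection keeps rank $m_r$ throughout, and at $t=1$ it equals $\widehat P_r$. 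This settles the eigenvalue count.

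\textbf{Step 2 (identifying the terms).} Integrate term by term. Decompose $R(z)=(\mu_r-z)^{-1}P_r+\sum_{s\ne r}(\mu_s-z)^{-1}P_s$. Expanding the second part of the $k$-th term around $z=\mu_r$ and computing residues gives exactly the combinatorial formula for $P_r^{(k)}$, with $R_r^{(0)}=-P_r$ and $R_r^{(j)}=R_r^j$ (Kato's classical computation). This identifies $\widehat P_r=\sum_k P_r^{(k)}$ once the bounds below ensure absolute convergence.

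\textbf{Step 3 (Hilbert--Schmidt tail bound).} This is the main obstacle: we need the sharp factor $g_r^{-1/2}\||R_r|^{1/2}EP_r\|_2$ and the rate $(4\delta_r')^{p-1}$, which a crude bound of the form $\|E\|\cdot$length does not give.

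To get them, first bound each $P_r^{(n)}$ directly. Every word $R_r^{(k_1)}E\cdots ER_r^{(k_{n+1})}$ with $k_1+\dots+k_{n+1}=n\ge1$ contains at least one $P_r$ factor adjacent to a factor $R_r$, so it contains a sub-word $|R_r|^{1/2}EP_r$ or $P_rE|R_r|^{1/2}$. Put that sub-word in $\|\cdot\|_2$ and every other $E$ in operator norm. After redistributing square roots of the powers of $R_r$, each remaining $E$ appears as one of the blocks $|R_r|^{1/2}E|R_r|^{1/2}$, $g_r^{-1/2}|R_r|^{1/2}EP_r$, or $g_r^{-1}P_rEP_r$, and is therefore bounded by $\delta_r'$. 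Here we use $\|R_r\|_\infty\le g_r^{-1}$ and that $\||R_r|^{j/2}\|_\infty\le g_r^{-j/2}$.

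What remains is combinatorics. The number of compositions of $n$ into $n+1$ nonnegative parts is $\binom{2n}{n}\le4^n$. The extra factors of $g_r^{-1/2}$ must be collected carefully so that exactly one survives in front. This yields
\[
\|P_r^{(n)}\|_2\le c\,g_r^{-1/2}\||R_r|^{1/2}EP_r\|_2\,(4\delta_r')^{n-1}.
\]
Summing the tail $n\ge p$ gives the geometric factor $(1-4\delta_r')^{-1}$. The second factor of $(1-4\delta_r')^{-1}$, together with the constant $4$, can absorb $c$ and any slack in the counting (alternatively, one can run the tail estimate on the contour integral of the Neumann remainder, which naturally produces the square). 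Finally, letting $p\to\infty$ in the tail bound yields $\widehat P_r=\sum_{n\ge0}P_r^{(n)}$.
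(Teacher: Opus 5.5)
Your argument is correct, and it reaches the statement by a genuinely different route from the paper's.

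\textbf{The paper's route.} It never uses a contour integral. It first proves eigenvalue separation via the relative perturbation bound of Jirak--Wahl. It then derives an exact real-variable remainder identity, $\widehat P_r-\sum_{n<p}P_r^{(n)}=(-1)^{p-1}\sum R_r^{(k_1)}E\cdots ER_r^{(k_p)}E\widehat R_r^{(p-k_1-\dots-k_p)}$, which involves the empirical reduced resolvent $\widehat R_r$. This remainder is bounded with contraction estimates for $\||R_r|^{-1/2}\widehat P_r\|_2$, $\|P_r\widehat R_r^k\|_2$ and $\||R_r|^{-1/2}\widehat R_r^k\|_\infty$, summing separately over $k\le 0$ and $k\ge 1$. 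The identity $\widehat P_r=\sum_n P_r^{(n)}$ comes out only as a by-product of the remainder bound.

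\textbf{Your route.} You obtain that identity first, in operator norm. It comes from the Riesz projection and a Neumann series weighted by exactly the operator $W$ that defines $\delta_r$, followed by Kato's residue computation. You then bound the tail by summing per-coefficient Hilbert--Schmidt bounds. These are precisely part (ii) of the paper's elementary norm estimates combined with its tuple count:
$\|P_r^{(n)}\|_2\le\binom{2n}{n}g_r^{-1/2}\||R_r|^{1/2}EP_r\|_2(\delta_r')^{n-1}\le 4g_r^{-1/2}\||R_r|^{1/2}EP_r\|_2(4\delta_r')^{n-1}$.

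\textbf{What each buys.} Your route is shorter and bypasses the eigenvalue separation, Taylor remainder and contraction lemmas. It even yields $(1-4\delta_r')^{-1}$ in place of $(1-4\delta_r')^{-2}$. The paper's route buys an explicit remainder in terms of empirical spectral quantities, already stated under $\delta_r<1$. Its contraction bound on $\||R_r|^{-1/2}\widehat P_r\|_2$ is also reused in the proof of the deterministic bounds.

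\textbf{Points to tighten.}
\begin{itemize}
\item State $c=4$ explicitly. The factor $(1-4\delta_r')^{-1}$ tends to $1$ as $\delta_r'\to 0$, so it cannot ``absorb'' any constant larger than $4$. Your remark is valid only because your own count already gives $c=4$.
\item Drop the alternative of estimating the contour integral of the Neumann remainder directly. Bounding the integrand pointwise loses the cancellation of the terms without a $P_r$ factor: these are holomorphic inside $\mathcal{C}_r$ and integrate to zero. So that estimate would not produce the factor $\||R_r|^{1/2}EP_r\|_2$.
\item The rank-$m_r$ Riesz projection coincides with $\widehat P_r=\sum_{j\in I_r}\widehat u_j\otimes\widehat u_j$ only once you know that the eigenvalues inside $\mathcal{C}_r$ are exactly those indexed by $I_r$. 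This follows because $t\mapsto\lambda_j(\Sigma+tE)$ is Lipschitz by Weyl's inequality and never equals $\mu_r\pm g_r/2$.
\item Your closing identity is already Step 2, not a consequence of the tail bound. What you need at the end is only that the Hilbert--Schmidt limit of the absolutely convergent series agrees with its operator-norm limit $\widehat P_r$. It does, since Hilbert--Schmidt convergence implies operator-norm convergence.
\end{itemize}
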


In the case of single eigenvalues a similar bound holds for the perturbed eigenvalues.
More precisely, for $ m_r = 1 $ and $j\geq 1$ such that $\lambda_j=\mu_r$, one can formulate a similar result for $ \widehat \lambda_j - \mu_r $ with Taylor coefficients given by $ \mu_r^{(n)} = \operatorname{tr}(P_{r}^{(n - 1)} E) + \operatorname{tr}(P_{r}^{(n)} R_{r}^{-1})$, $ n \geq 1$, where \( R_{r}^{-1} \) denotes the pseudoinverse of \( R_{r} \), see \cite{Wahl2026PertubationExpansionsI}.
In the case of multiple eigenvalues, we formulate an expansion of $ \widehat{P}_{r} (\widehat{\Sigma} - \mu_r) $.
For $ n \geq 1 $, we define
\begin{align}
  Q_{r}^{(n)}: = \sum_{l = 0}^{n-1} P_{r}^{(l)} E                  P_{r}^{(n - 1 - l)} 
               + \sum_{l = 0}^{n}   P_{r}^{(l)} (\Sigma - \mu_{r}) P_{r}^{(n     - l)}.
\end{align}
For instance, we have
\begin{align}
  Q_{r}^{(1)} &= P_rEP_r\\
  Q_{r}^{(2)} &= - P_rEP_rER_r - P_rER_rEP_r - R_rEP_rEP_r.
  \notag
\end{align}
If $ m_r = 1 $, then we have $ \operatorname{tr}(Q_r^{(n)}) = \mu_r^{(n)} $, \( n \ge 1 \), as can
be seen from the identity $ P_r^{(n)} = \sum_{l = 0}^n P_r^{(l)} P_r^{(n - l)} $, $ n \ge 0 $.
In the generalized setting, we have

\begin{theorem}[Eigenvalue perturbation series]
  \label{thm_EigenvaluePerturbationSeries}
    Let \( \delta_{r}' < 1 / 4 \). Then, for any \( p \ge 1 \), 
  \begin{align}
          \| \widehat{P}_{r} (\widehat{ \Sigma } - \mu_r) - \sum_{n = 1}^{p} Q_{r}^{(n)} \|_{2} 
    & \le
          8 \big( \| P_rEP_r \|_{2} 
                + g_r^{1/2} \| |R_{r}|^{1/2} E P_r \|_{2}\delta_r^\prime   
            \big)
            \frac{(p + 1) (4 \delta_r')^{p - 1}}{(1 - 4 \delta_r')^2}.
    \notag
  \end{align}
\end{theorem}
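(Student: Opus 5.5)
We derive the eigenvalue expansion from Theorem~\ref{thm_ProjectorPerturbationSeries}. The starting point is the algebraic identity
\begin{align}
  \widehat{P}_{r}(\widehat{\Sigma}-\mu_r)
  = \widehat{P}_{r} E \widehat{P}_{r} + \widehat{P}_{r}(\Sigma-\mu_r)\widehat{P}_{r},
\end{align}
which holds because $\widehat{P}_r$ commutes with $\widehat{\Sigma}$ and is idempotent, and because $\widehat{\Sigma}-\mu_r = E + (\Sigma-\mu_r)$. We then insert the truncated series $S_q := \sum_{n=0}^{q} P_r^{(n)}$ for $\widehat{P}_r$ in both factors and collect terms by total order in $E$. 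The coefficient of order $n$ is exactly $Q_r^{(n)}$. In the first summand the $E$ contributes one order, which gives $\sum_{l} P_r^{(l)} E P_r^{(n-1-l)}$. In the second summand the orders of the two projector factors must add to $n$. Note that $(\Sigma-\mu_r)P_r^{(0)} = 0$, so the zeroth-order term vanishes and the series starts at $n=1$.

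For the remainders, write $\widehat{P}_r = S_{k} + T_{k}$, where Theorem~\ref{thm_ProjectorPerturbationSeries} controls $\|T_k\|_2$ by $4 g_r^{-1/2}\||R_r|^{1/2}EP_r\|_2 (4\delta_r')^{k}/(1-4\delta_r')^2$. Expanding $\widehat{P}_r X \widehat{P}_r$ with $X\in\{E,\Sigma-\mu_r\}$, we split the error into two kinds of pieces.

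\textbf{Products of truncated terms of too high order.} These are terms $P_r^{(l)}XP_r^{(l')}$ with $l+l'$ above the cutoff. They are bounded termwise.

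\textbf{Terms with a tail $T_k$.} These are bounded using the tail estimate above.

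A naive bound fails, because $\|E\|$ and $\|\Sigma-\mu_r\|$ are not small. We therefore weight every factor. We write each $P_r^{(n)}$ as a sum of products $R_r^{(k_1)}E\cdots ER_r^{(k_{n+1})}$ and factor $R_r = |R_r|^{1/2}\operatorname{sgn}(R_r)|R_r|^{1/2}$. Each $E$ between two weights then becomes an operator of norm at most $\delta_r'$, or at most $g_r^{1/2}\delta_r'$ respectively $g_r\delta_r'$ next to $P_r$. The combinatorics follow \cite{Wahl2026PertubationExpansionsI}: there are at most $4^n$ terms, which produces the factor $(4\delta_r')^{n}$.

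The middle factor $\Sigma-\mu_r$ is handled through $(\Sigma-\mu_r)R_r = I-P_r$ and $(\Sigma-\mu_r)P_r=0$. Hence $P_r^{(l)}(\Sigma-\mu_r)P_r^{(n-l)}$ is nonzero only when both sides end in a resolvent, and the identity absorbs one $R_r$, leaving a product of the same weighted type. The bound is in Hilbert–Schmidt norm. Every term of order $n\ge1$ contains at least one factor $P_rE|R_r|^{1/2}$ or $P_rEP_r$ (a $-P_r$ must appear since $\sum k_i = n$ with $n+1$ slots). Putting that factor in $\|\cdot\|_2$ and all others in operator norm yields the prefactor $\|P_rEP_r\|_2 + g_r^{1/2}\||R_r|^{1/2}EP_r\|_2\delta_r'$.

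The tail pieces are treated in the same way. One of the projector factors is kept as $\widehat{P}_r$, for example $S_kXT_{k'}$ or $T_kX\widehat{P}_r$, and we use $\widehat{P}_r(\widehat{\Sigma}-\mu_r)$ being bounded by $\|\widehat{P}_r(\widehat{\Sigma}-\mu_r)\|_\infty\lesssim g_r\delta_r'$ (Weyl-type). The problem is that $T_k$ alone only has a bound in terms of $g_r^{-1/2}\||R_r|^{1/2}EP_r\|_2$, while multiplying by $\Sigma-\mu_r$ requires weighted control, essentially of $|R_r|^{-1/2}T_k$. So we need a strengthened version of Theorem~\ref{thm_ProjectorPerturbationSeries} for weighted tails $\||R_r|^{-1/2}T_k\|_2$ and $\|P_rT_k\|_2$. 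We would obtain it by rerunning its proof, presumably a resolvent/contour or fixed-point argument, with the weights inserted.

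Summing the resulting geometric-type series over the $p+1$ possible split positions gives the factor $(p+1)(4\delta_r')^{p-1}/(1-4\delta_r')^2$ and the constant $8$. The main obstacle is the weighted tail control just described. The rest is bookkeeping.
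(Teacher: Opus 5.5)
\paragraph{Gap: the tail terms.} Much of your setup matches the paper. This includes the identity $\widehat P_r(\widehat\Sigma-\mu_r)=\widehat P_rE\widehat P_r+\widehat P_r(\Sigma-\mu_r)\widehat P_r$, the identification of the order-$n$ coefficient with $Q_r^{(n)}$, the weighted termwise bounds, and the absorption of $\Sigma-\mu_r$ via $(\Sigma-\mu_r)R_r=I-P_r$ and $(\Sigma-\mu_r)P_r=0$.

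The gap is exactly the step you flag as the main obstacle. Fixing a truncation $\widehat P_r=S_k+T_k$ creates cross terms in which a tail is multiplied by the non-small operator $\Sigma-\mu_r$. For example, $S_1(\Sigma-\mu_r)T_1=-P_rE(I-P_r)T_1$. Bounding this in the required form needs $\|P_rE|R_r|^{1/2}\|_\infty\,\||R_r|^{-1/2}T_1\|_2$, a weighted tail estimate that Theorem~\ref{thm_ProjectorPerturbationSeries} does not provide and that you do not prove. ``Rerunning its proof with the weights inserted'' is a plan, not an argument. The bound $\max_{j\in I_r}|\widehat\lambda_j-\mu_r|\le\delta_rg_r$ from Lemma~\ref{lem_EigenvalueSeparation} does not close this. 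It only handles terms in which $\widehat\Sigma-\mu_r$ sits next to $\widehat P_r$, and mixed terms like the one above remain. As written, the proof is incomplete.

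\paragraph{The missing idea.} No quantitative tail bound is needed. Theorem~\ref{thm_ProjectorPerturbationSeries} gives $\widehat P_r=\sum_{n\ge0}P_r^{(n)}$ with $\sum_n\|P_r^{(n)}\|_2<\infty$. Since $X\in\{E,\Sigma-\mu_r\}$ is bounded, the double series $\sum_{l,l'}P_r^{(l)}XP_r^{(l')}$ converges absolutely in Hilbert--Schmidt norm. Its total norm is at most $\|X\|_\infty(\sum_l\|P_r^{(l)}\|_2)^2$, and it equals $\widehat P_rX\widehat P_r$. Equivalently, let your truncation level tend to infinity: the cross terms vanish because $\|T_q\|_2\to0$ while $\|S_q\|_\infty$ and $\|X\|_\infty$ stay bounded, however large $\|X\|_\infty$ is. The remainder is then exactly
\[
\sum_{n\ge p}\sum_{l=0}^{n}P_r^{(l)}EP_r^{(n-l)}+\sum_{n\ge p+1}\sum_{l=0}^{n}P_r^{(l)}(\Sigma-\mu_r)P_r^{(n-l)}.
\]
This is a series of explicit monomials in $E$ and $R_r^{(k)}$ only. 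For the second sum, use $R_r^{k}(\Sigma-\mu_r)R_r^{k'}=R_r^{k+k'-1}$ when $k,k'\ge1$, and note the product is $0$ otherwise. Each monomial is bounded by Lemma~\ref{lem_ElementaryNormEstimates}, the monomials are counted by Lemma~\ref{lem_NumberOfTuples}, and the sum is evaluated with Lemma~\ref{lem_DerivativeOfAGeometricSeries}. This is the paper's proof.

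In particular, the factor $(p+1)(4\delta_r')^{p-1}/(1-4\delta_r')^2$ comes from $\sum_{n\ge p}(n+1)x^{n-1}$, i.e.\ from all orders $n\ge p$ with their $n+1$ split positions each. It does not come from $p+1$ split positions at a single order. With this change, your weighted strengthening of Theorem~\ref{thm_ProjectorPerturbationSeries} becomes unnecessary.
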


Given the Taylor expansions in Theorems \ref{thm_ProjectorPerturbationSeries} and \ref{thm_EigenvaluePerturbationSeries}, one might ask what are the best perturbation bounds that can be derived from these Taylor appoximations.

\begin{corollary}[Eigenvalue perturbation bound]
  \label{cor_EigenvaluePerturbationBound}
  If \( \delta_r' < 1 / 4 - \varepsilon \) for some $\epsilon\in(0,1/4)$, then
  \begin{align}
          \|(\widehat\lambda_j-\mu_r)_{j\in I_r}\|_{2}
    \leq
          C_{\varepsilon} ( \| P_{r} E P_{r} \|_{2} + g_r^{1/2} \| |R_{r}|^{1/2} E P_r \|_{2}\delta_r^\prime )
  \end{align}
  with $ C_\epsilon = 17/ (4 \epsilon)^2 $.
\end{corollary}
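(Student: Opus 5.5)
Apply Theorem \ref{thm_EigenvaluePerturbationSeries} with $p=1$ and read off the eigenvalues from the operator $\widehat P_r(\widehat\Sigma-\mu_r)$.

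\textbf{Step 1: identify the eigenvalue vector with a Hilbert--Schmidt norm.} Since $\widehat P_r$ is the spectral projector of $\widehat\Sigma$ onto $\operatorname{span}\{\widehat u_j: j\in I_r\}$, it commutes with $\widehat\Sigma$, and
\begin{align}
  \widehat P_r(\widehat\Sigma-\mu_r)=\sum_{j\in I_r}(\widehat\lambda_j-\mu_r)\,\widehat u_j\otimes\widehat u_j .
\end{align}
This operator is self-adjoint with eigenvalues $(\widehat\lambda_j-\mu_r)_{j\in I_r}$ (plus zeros), so
\begin{align}
  \|(\widehat\lambda_j-\mu_r)_{j\in I_r}\|_2=\|\widehat P_r(\widehat\Sigma-\mu_r)\|_2 .
\end{align}

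\textbf{Step 2: apply Theorem \ref{thm_EigenvaluePerturbationSeries} with $p=1$.} Since $Q_r^{(1)}=P_rEP_r$, the triangle inequality gives
\begin{align}
  \|\widehat P_r(\widehat\Sigma-\mu_r)\|_2\le \|P_rEP_r\|_2+8\big(\|P_rEP_r\|_2+g_r^{1/2}\||R_r|^{1/2}EP_r\|_2\,\delta_r'\big)\frac{2}{(1-4\delta_r')^2}.
\end{align}

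\textbf{Step 3: control the constant.} Under $\delta_r'<1/4-\varepsilon$ we have $1-4\delta_r'>4\varepsilon$, hence $(1-4\delta_r')^{-2}<(4\varepsilon)^{-2}$. Therefore
\begin{align}
  \|(\widehat\lambda_j-\mu_r)_{j\in I_r}\|_2\le\Big(1+\frac{16}{(4\varepsilon)^2}\Big)\big(\|P_rEP_r\|_2+g_r^{1/2}\||R_r|^{1/2}EP_r\|_2\,\delta_r'\big).
\end{align}
Because $4\varepsilon<1$, we have $1\le(4\varepsilon)^{-2}$, so the prefactor is at most $17/(4\varepsilon)^2=C_\varepsilon$.

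\textbf{Main point requiring care.} The only step needing justification is Step 1, where individual eigenvalues are replaced by the block operator. Two facts are used here:
\begin{itemize}
\item $\widehat P_r$ is exactly the projector onto the eigenvectors indexed by $I_r$, with the ordering chosen in Section \ref{ssec_Notation};
\item the Hilbert--Schmidt norm of $\sum_{j\in I_r} a_j\,\widehat u_j\otimes\widehat u_j$ equals $\|(a_j)_{j\in I_r}\|_2$, because the $\widehat u_j$ are orthonormal.
\end{itemize}
No matching between individual $\widehat\lambda_j$ and $\lambda_j$ is needed, since the bound concerns only the multiset of eigenvalues in the block.
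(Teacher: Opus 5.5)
Your proof is correct and gives the same constant. It reaches the paper's intermediate bound $\|(\widehat\lambda_j-\mu_r)_{j\in I_r}\|_2\le\|\widehat P_r(\widehat\Sigma-\mu_r)-Q_r^{(1)}\|_2+\|P_rEP_r\|_2$ more directly than the paper does.

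The paper works with spectra. It identifies $(\widehat\lambda_j-\mu_r)_{j\in I_r}$ with the eigenvalues of $\widehat P_r(\widehat\Sigma-\mu_r I)\widehat P_r$ and inserts the eigenvalues of $P_rEP_r$ by the triangle inequality in $\ell^2$. It then controls their difference with the finite-dimensional Hoffman--Wielandt inequality. You instead note that the left-hand side is already exactly a Hilbert--Schmidt norm. The triangle inequality for $\|\cdot\|_2$ therefore suffices, and no eigenvalue matching is needed.

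Besides being shorter, your route avoids some bookkeeping that the paper's route needs. The two compressions live on different $m_r$-dimensional ranges. So Hoffman--Wielandt really has to be applied on $\operatorname{ran}P_r+\operatorname{ran}\widehat P_r$ with zero-padded spectra. An index-by-index comparison of only the $m_r$ block eigenvalues can fail when eigenvalues of opposite signs occur. This is harmless for a norm bound, but it requires care.

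In return, the paper's detour gives information about the \emph{location} of the block eigenvalues, not only their size. Combined with Theorem \ref{thm_EigenvaluePerturbationSeries} at $p=2$ instead of $p=1$, the Hoffman--Wielandt step would show that the zero-padded spectra of $\widehat P_r(\widehat\Sigma-\mu_r)\widehat P_r$ and $P_rEP_r$ are within $\ell^2$-distance of order $\delta_r'(\|P_rEP_r\|_2+g_r^{1/2}\||R_r|^{1/2}EP_r\|_2)$. That is a first-order expansion of the block eigenvalues around $\mu_r$ plus the eigenvalues of $P_rEP_r$, which a norm identity alone cannot produce. For the corollary as stated, however, Hoffman--Wielandt is unnecessary and your argument is the more economical one.
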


\begin{proof}
     Corollary \ref{cor_EigenvaluePerturbationBound} follows from Theorem \ref{thm_EigenvaluePerturbationSeries} in combination with the Hoffman-Wielandt inequality. First, note that the finite family $(\widehat\lambda_j-\mu_r)_{j\in I_r}$ coincides with $(\lambda_k(\widehat P_r(\widehat\Sigma-\mu_{r} I)\widehat P_r))_{k=1}^{m_r}$. 
  Combining this with the Hoffman-Wielandt inequality, we get
  \begin{align}
    \Big( \sum_{j \in I_r} (\widehat{\lambda}_{j} - \mu_{r} )^{2} \Big)^{1/2} 
    &= 
    \Big(\sum_{k = 1}^{m_r} (\lambda_k(\widehat P_r(\widehat\Sigma - \mu_{r} I)\widehat P_r))^2\Big)^{ 1 / 2 }
    \\
    & \le
    \Big( \sum_{k = 1}^{m_r} (\lambda_k(\widehat P_r(\widehat\Sigma-\mu_{r} I)\widehat P_r)-\lambda_k(P_rEP_r))^2\Big)^{ 1 / 2 } \notag \\
    &+\Big(\sum_{k=1}^{m_r}(\lambda_k(P_rEP_r))^2\Big)^{ 1 / 2 }
    \notag 
    \\
    & \le 
    \|\widehat P_r(\widehat\Sigma-\mu_{r} I)\widehat P_r)-P_rEP_r\|_2+\|P_rEP_r\|_2.
    \notag 
  \end{align}
  Note that we can apply the finite-dimensional version of Hofffman-Wielandt (see e.g.~\cite{MR2906465}), since both $\widehat P_r$ and $P_r$ have an $m_r$-dimensional range.
  The claim now follows from Theorem \ref{thm_EigenvaluePerturbationSeries} with $p=1$ using that $ Q_r^{(1)} = P_r E P_r $.
\end{proof}

Corollary \ref{cor_EigenvaluePerturbationBound} only includes (upper bounds) for the Hilbert-Schmidt norm of the two leading Taylor coefficients $ Q_r^{(1)}$ and $ Q_r^{(2)}$, and is thus expected to be close to optimal (see \cite{Wahl2026PertubationExpansionsI} for a more detailed discussion).

\begin{corollary}[Projector perturbation bound]
  \label{cor_ProjectorPerturbationBound}
  If \( \delta_r' < 1 / 4 - \varepsilon \) for some $\epsilon\in(0,1/4)$, then
  \begin{align}
    \| \widehat{P}_{r} - P_r\|_{2} 
    & \le 
     \sqrt{2}\|  R_{r} E P_{r} \|_{2}+C_\epsilon g_r^{-1/2}\| | R_{r} |^{1/2} E P_{r} \|_{2}\delta_r^\prime
  \end{align}
  with $C_\epsilon=16/(4\epsilon)^2$.
\end{corollary}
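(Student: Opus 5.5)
The plan is to start from Theorem~\ref{thm_ProjectorPerturbationSeries} with $p=2$, which gives
\begin{align}
  \| \widehat{P}_{r} - P_r - P_r^{(1)} \|_{2}
  \le
  4 g_r^{-1/2} \| |R_r|^{1/2} E P_r \|_2 \frac{4\delta_r'}{(1-4\delta_r')^2}.
\end{align}
Since $\delta_r' < 1/4-\varepsilon$, we have $1-4\delta_r' > 4\varepsilon$. The remainder is therefore at most
\begin{align}
  \frac{16}{(4\varepsilon)^2}\, g_r^{-1/2}\| |R_r|^{1/2} E P_r\|_2\,\delta_r',
\end{align}
which is exactly the second term with $C_\varepsilon = 16/(4\varepsilon)^2$. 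By the triangle inequality,
\begin{align}
  \|\widehat P_r - P_r\|_2 \le \|P_r^{(1)}\|_2 + \text{remainder},
\end{align}
so it only remains to compute the Hilbert--Schmidt norm of the first-order term.

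From the explicit form given after the definition of the Taylor coefficients, $P_r^{(1)} = -P_r E R_r - R_r E P_r$. The two summands have orthogonal ranges and co-ranges: $R_r P_r = P_r R_r = 0$, so $P_rER_r$ maps $\operatorname{ran}(I-P_r)$ into $\operatorname{ran}P_r$, while $R_rEP_r$ maps $\operatorname{ran}P_r$ into $\operatorname{ran}(I-P_r)$. Hence they are orthogonal in the Hilbert--Schmidt inner product, since $\operatorname{tr}((P_rER_r)^*R_rEP_r) = \operatorname{tr}(R_rEP_rR_rEP_r) = 0$ because $P_rR_r = 0$. This gives
\begin{align}
  \|P_r^{(1)}\|_2^2 = \|P_rER_r\|_2^2 + \|R_rEP_r\|_2^2 = 2\|R_rEP_r\|_2^2,
\end{align}
where the last equality uses that $P_rER_r = (R_rEP_r)^*$, as $E$, $R_r$ and $P_r$ are self-adjoint. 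Therefore $\|P_r^{(1)}\|_2 = \sqrt{2}\|R_rEP_r\|_2$, and combining the two bounds yields the claim.

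There is no real obstacle once Theorem~\ref{thm_ProjectorPerturbationSeries} is available. The only points to check are the constant bookkeeping from $(1-4\delta_r')^{-2} \le (4\varepsilon)^{-2}$, and that the Hilbert--Schmidt norms are finite. The latter is guaranteed because $\|R_rEP_r\|_2 \le g_r^{-1/2}\||R_r|^{1/2}EP_r\|_2$, which is finite whenever the right-hand side of the corollary is.
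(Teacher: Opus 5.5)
Your proposal is correct and takes the same route as the paper, which simply states that the corollary follows from Theorem~\ref{thm_ProjectorPerturbationSeries} with $p=2$. You supply the details the paper leaves implicit: the bound $(1-4\delta_r')^{-2}\le(4\varepsilon)^{-2}$, which gives $C_\varepsilon=16/(4\varepsilon)^2$, and the Hilbert--Schmidt orthogonality of $P_rER_r$ and $R_rEP_r$, which yields $\|P_r^{(1)}\|_2=\sqrt{2}\,\|R_rEP_r\|_2$.
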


\noindent Corollary \ref{cor_ProjectorPerturbationBound} follows from Theorem
\ref{thm_ProjectorPerturbationSeries} in the case that $p = 2$.



\section{Proofs of the spectral perturbation bounds}
\label{sec_ProofsOfThePerturbationBounds}

\subsection{Eigenvalue separation and Taylor remainder}
\label{ssec_EigenvalueSeparationAndTaylorRemainder}

When \( \delta_{r} < 1 \), we obtain a guarantee that the perturbed eigenvalues \( \widehat{ \lambda }_{j} \) stay close to their unperturbed counterparts.
In particular, for \( \delta_{r} < 1/2 \), the groups corresponding to \( \mu_{r-1}, \mu_{r} \) and \( \mu_{r + 1} \) are contained in separate intervals that do not overlap.

\begin{lemma}[Eigenvalue separation]
  \label{lem_EigenvalueSeparation}
  If \( \delta_{r} < 1 \), then we have 
  \begin{align}
            | \widehat{ \lambda }_{j} - \mu_{r} | 
    & \le
            \delta_{r} g_{r} 
            \qquad \qquad \qquad
            \qquad \text{for all } j \in I_{r},
    \\
            \widehat{ \lambda }_{k} - \mu_{r + 1} 
    & \le 
            \delta_{r} ( \mu_{r} - \mu_{r + 1} ) 
            \quad \qquad \ \ \text{for all } k \in I_{r + 1},
    \notag
    \\
            \widehat{ \lambda }_{k} - \mu_{r - 1} 
    & \ge
            - \delta_{r} ( \mu_{r - 1} - \mu_{r} ) 
            \qquad \ \ \; \text{for all } k \in I_{r - 1}.
    \notag
  \end{align}
\end{lemma}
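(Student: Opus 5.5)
The plan is to use a weighted Weyl / Courant--Fischer min-max argument. Introduce the positive weight operator
\[
W := |R_r|^{1/2} + g_r^{-1/2} P_r ,
\]
which is injective on $\mathcal{H}$. By definition, $\|W E W\|_\infty = \delta_r$. This gives, for every $h \in \mathcal{H}$ in the domain of $W^{-1}$, the quadratic form bound
\[
|\langle E h, h\rangle| \le \delta_r \|W^{-1} h\|^2 = \delta_r \Big( \sum_{s\neq r} |\mu_s-\mu_r|\,\|P_s h\|^2 + g_r \|P_r h\|^2 \Big).
\]
So the perturbation is dominated by a multiple of $|\Sigma - \mu_r|$, except that on the $r$-th block the vanishing weight is replaced by $g_r$. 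All three inequalities will follow from Courant--Fischer applied to $\widehat\Sigma = \Sigma + E$ together with this form bound. The bound extends to all of $\mathcal{H}$ by density, since we only test it on the finite-dimensional spans below.

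First consider the upper bound for $\widehat\lambda_j$ with $j \in I_r$, and the bound for $k \in I_{r+1}$. Let $J = \max I_r$ and use $\widehat\lambda_j \le \widehat\lambda_{k'}$ for the largest relevant index $k'$. Then
\[
\widehat\lambda_{k'} = \min_{\dim V^\perp = k'-1} \max_{h\in V, \|h\|=1} \langle \widehat\Sigma h, h\rangle .
\]
For $k' \in I_{r}$, take $V = \big(\bigoplus_{s<r} \operatorname{ran} P_s\big)^\perp$, and also handle the range of indices down to $\min I_r$. For any $h \in V$ write $a = \|P_r h\|^2$ and $b_s = \|P_s h\|^2$ for $s > r$. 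Then
\[
\langle \widehat\Sigma h, h\rangle - \mu_r \le \sum_{s>r} (\mu_s - \mu_r) b_s + \delta_r\Big(\sum_{s>r}(\mu_r-\mu_s) b_s + g_r a\Big) \le \delta_r g_r ,
\]
because $\delta_r < 1$ makes the $s>r$ terms nonpositive. This gives $\widehat\lambda_j \le \mu_r + \delta_r g_r$ for all $j \ge \min I_r$. The same computation with $V = \big(\bigoplus_{s\le r}\operatorname{ran}P_s\big)^\perp$ and base point $\mu_{r+1}$ does not directly give the second inequality, because the weights are centered at $\mu_r$, not $\mu_{r+1}$. Instead, I would keep the base point $\mu_r$. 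For $h \perp \bigoplus_{s\le r}\operatorname{ran}P_s$ I get
\[
\langle\widehat\Sigma h,h\rangle \le \sum_{s>r}\big(\mu_s + \delta_r(\mu_r-\mu_s)\big) b_s \le \mu_{r+1} + \delta_r(\mu_r-\mu_{r+1}),
\]
since $s\mapsto \mu_s(1-\delta_r) + \delta_r\mu_r$ is decreasing. This yields the second claim for $k \in I_{r+1}$.

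The lower bounds are symmetric, using the max-min form
\[
\widehat\lambda_{k} = \max_{\dim V = k} \min_{h\in V,\|h\|=1}\langle\widehat\Sigma h,h\rangle .
\]
For $j\in I_r$, take $V = \bigoplus_{s\le r}\operatorname{ran}P_s$, of dimension $\max I_r$. For $h \in V$ the computation gives
\[
\langle\widehat\Sigma h,h\rangle - \mu_r \ge \sum_{s<r}(1-\delta_r)(\mu_s-\mu_r)b_s - \delta_r g_r a \ge -\delta_r g_r ,
\]
so $\widehat\lambda_j \ge \mu_r - \delta_r g_r$ for all $j \le \max I_r$. For $k\in I_{r-1}$, take $V=\bigoplus_{s\le r-1}\operatorname{ran}P_s$. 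Then $\langle\widehat\Sigma h,h\rangle \ge \min_{s<r}\big(\mu_s - \delta_r(\mu_s-\mu_r)\big) = \mu_{r-1}-\delta_r(\mu_{r-1}-\mu_r)$, again by monotonicity. Combining the two directions gives the first claim.

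The main obstacle is making the form bound rigorous when $W^{-1}$ is unbounded and the spaces involved are infinite-dimensional (the $V$ in the min-max for upper bounds). I would first prove $|\langle Eh,h\rangle|\le\delta_r\|W^{-1}h\|^2$ for $h$ in finite spans of the $u_j$. On $V$ the right-hand side is controlled by $\sum_{s>r}(\mu_r-\mu_s)b_s \le \mu_r\|h\|^2$, so both sides are continuous on the relevant sets, and the bound extends by approximation. One also needs to check that the compact self-adjoint min-max characterization applies to the nonnegative operator $\widehat\Sigma$, which holds since $\widehat\Sigma$ is compact and positive.
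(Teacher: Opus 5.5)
Your argument is correct, and it takes a more direct, self-contained route than the paper.

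\textbf{How the paper argues.} For each claim the paper builds a separate weight operator: $T_{\ge r}$ and $T_{\le r}$ for block $r$, and $T_{>r}$, $T_{<r}$ for the neighbouring blocks. It shows $\|TET\|_\infty\le 1$ by comparing $T$ with $|R_r|^{1/2}+g_r^{-1/2}P_r$, using $\delta_r<1$. It then invokes Proposition~1 of \cite{JirakWahl2018PerturbationBoundsUnderRelativeGap} as a black box.

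\textbf{How yours compares.} You extract the single form inequality $|\langle Eh,h\rangle|\le\delta_r\|W^{-1}h\|^2$ and feed it into Courant--Fischer yourself. Since $\|TET\|_\infty\le 1$ says exactly that $\langle Eh,h\rangle\le\langle(\mu_r+\delta_r g_r-\Sigma)h,h\rangle$ on the relevant spectral subspace, your proof is essentially the content of that cited proposition made explicit.

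\textbf{What your version buys.} It is self-contained and uniform. One inequality centred at $\mu_r$, together with the monotonicity of $s\mapsto(1-\delta_r)\mu_s+\delta_r\mu_r$, gives the neighbour-block bounds $\mu_{r+1}+\delta_r(\mu_r-\mu_{r+1})$ and $\mu_{r-1}-\delta_r(\mu_{r-1}-\mu_r)$ at once. These are the bounds used later, e.g.\ $|\widehat\lambda_k-\mu_r|\ge(1-\delta_r)g_r$ for $k\notin I_r$ in Lemma~\ref{lem_Contraction}. The paper dismisses these two claims as ``analogous''. But the weights $T_{>r}$, $T_{<r}$ written there still encode the thresholds $\mu_r\pm\delta_r g_r$, which yield only bounds already implied by the first claim. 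They must be recentred at the two values above, and your monotonicity step does exactly that.

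\textbf{Two small corrections.}
\begin{enumerate}
\item The obstacle you flag does not exist. The diagonal entries of $W$, namely $|\mu_s-\mu_r|^{-1/2}$ for $s\neq r$ and $g_r^{-1/2}$, all lie in $[\mu_1^{-1/2},g_r^{-1/2}]$. So $W$ and $W^{-1}$ are both bounded, and the form bound holds on all of $\mathcal{H}$ via $h=W(W^{-1}h)$; no density argument is needed.
\item In the upper bound, Courant--Fischer is applied at the index $\min I_r$, since your $V$ has codimension $\min I_r-1$, not at the ``largest relevant index''. The remark about $J=\max I_r$ should be dropped.
\end{enumerate}
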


\begin{proof} 
  Set
  \begin{align}
        T_{ \ge r }:
    =
        \sum_{ s \ge r }
        \frac{1}{ \sqrt{ \mu_{r} + \delta_{r} g_{r} - \mu_{s} } } 
        P_{s},
    \qquad 
        T_{ \le r }: 
    = 
        \sum_{ s \le r } 
        \frac{1}{ \sqrt{ \mu_{s} + \delta_{r} g_{r} - \mu_{r} } } 
        P_{s},
  \end{align}
  where the sum is taken over the indices of the distinct eigenvalues \( (
  \mu_{r} )_{ r \ge 1 } \) only.
  We have
  \begin{align}
            \| T_{ \ge r } E T_{ \ge r } \|_{ \infty } 
    & \le 
            \| ( | R_{r} |^{1/2} + ( \delta_{r} g_{r} )^{-1/2} P_{r} ) E
               ( | R_{r} |^{1/2} + ( \delta_{r} g_{r} )^{-1/2} P_{r} )
            \|_{ \infty } 
    \\
    & \le 
            \delta_{r}^{-1} 
            \| ( | R_{r} |^{1/2} + g_{r}^{-1/2} P_{r} ) E
               ( | R_{r} |^{1/2} + g_{r}^{-1/2} P_{r} )
            \|_{ \infty } 
    \le 
            1.
    \notag
  \end{align}
  For any \( j \in I_{r} \), this implies
  \begin{align}
    & \ \ \ \
          \Big\| \sum_{ k, l \ge j }
                 \frac{1}
                 { \sqrt{ \lambda_{j} + \delta_{r} g_{r} - \lambda_{k} } } 
                 \frac{1}
                 { \sqrt{ \lambda_{j} + \delta_{r} g_{r} - \lambda_{l} } } 
                 ( u_{k} \otimes u_{k} ) E ( u_{l} \otimes u_{l} ) 
          \Big\|_{ \infty }
    \\
    & \le 
          \| T_{ \ge r } E T_{ \ge r } \|_{ \infty } 
    \le 
          1.
    \notag 
  \end{align}
  Proposition 1 from \cite{JirakWahl2018PerturbationBoundsUnderRelativeGap}
  then yields \( \widehat{ \lambda }_{j} - \lambda_{j} \le \delta_{r} g_{r} \).  
  Analogously, we obtain 
  \( \widehat{ \lambda }_{j} - \lambda_{j} \ge - \delta_{r} g_{r} \), which
  completes the proof of the first claim.
  By considering 
  \begin{align}
        T_{ > r }:
    =
        \sum_{ s > r }
        \frac{1}{ \sqrt{ \mu_{r} + \delta_{r} g_{r} - \mu_{s} } } 
        P_{s},
    \qquad 
        T_{ <  r }: 
    = 
        \sum_{ s < r } 
        \frac{1}{ \sqrt{ \mu_{s} + \delta_{r} g_{r} - \mu_{r} } } 
        P_{s},
  \end{align}
  the second and third claim can be verified analogously.
\end{proof}

\noindent In the situation of Lemma \ref{lem_EigenvalueSeparation}, we obtain
an explicit Taylor remainder.

\begin{lemma}[Taylor remainder]
  \label{lem_TaylorRemainder}
  Suppose that \( \delta_{r} < 1 \).
  Then, for any integer \( p \ge 1 \), 
  \begin{align}
    \label{eq_A_TaylorRemainder}
        \widehat{P}_{r} - \sum_{n = 0}^{p - 1} P_{r}^{ (n) }
    & = 
        (-1)^{p - 1} 
        \sum_{k_{1}, \dots k_{p}, \ge 0} 
        R_{r}^{ ( k_{1} ) } E \dots E R_{r}^{ ( k_{p} ) } 
        E \widehat{R}_{r}^{ ( p - k_{1} - \dots - k_{p} ) }
  \end{align}
  with 
  \( 
    \widehat{R}_{r}^{ (k) } = - \sum_{ j \in I_{r} }  
                                ( \widehat{ \lambda }_{j} - \mu_{r} )^{ - k }
                                \widehat{u}_{j} \otimes \widehat{u}_{j} 
  \),
  for \( k \le 0 \) and \( \widehat{R}_{r}^{ (k) } = \widehat{R}_{r} ^{k} \),
  for  \( k > 0 \), where 
  \begin{align}
    \widehat{R}_{r}: = \sum_{ s \ne r } \sum_{ j \in I_{s} } 
                       ( \widehat{ \lambda }_{j} - \mu_{r}  )^{-1} 
                       \widehat{u}_{j} \otimes \widehat{u}_{j}.
  \end{align}
  is the empirical reduced resolvent.
  It is well defined, since \( \widehat{ \lambda }_{j} - \mu_{r} \ne 0 \) for 
  \( j \in I_{s}, s \ne r \)  due to Lemma 
  \normalfont \ref{lem_EigenvalueSeparation}.
\end{lemma}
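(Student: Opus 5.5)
Throughout, $\widehat{R}_r$ and $\widehat{R}_r^{(k)}$ are as in the statement, and I write $\widehat{\Pi}_s:=\sum_{j\in I_s}\widehat u_j\otimes\widehat u_j$ for the empirical block projectors. The plan is to derive \eqref{eq_A_TaylorRemainder} by induction on $p$ from one exact algebraic identity linking the empirical and population quantities. This is the multiple-eigenvalue analogue of the argument in \cite{Wahl2026PertubationExpansionsI}.

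\textbf{Well-definedness.} By Lemma \ref{lem_EigenvalueSeparation} with $\delta_r<1$, every $\widehat\lambda_j$ with $j\in I_s$, $s\neq r$, differs from $\mu_r$. The eigenvalues $\widehat\lambda_j$ with $j\in I_r$ might equal $\mu_r$. Then $\widehat R_r^{(k)}$ for $k<0$ is still fine, since negative powers of $(\widehat\lambda_j-\mu_r)^{-1}$ are positive powers of $\widehat\lambda_j-\mu_r$. The case $k=0$ gives $-\widehat P_r$. So every operator appearing in the formula is bounded. I also need the series to converge, which should follow from the weighted bounds used later in Theorem \ref{thm_ProjectorPerturbationSeries}.

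\textbf{Base identity ($p=1$).} The claim reads
\begin{align}
\widehat P_r-P_r=\sum_{k\ge0}R_r^{(k)}E\widehat R_r^{(1-k)}
=-P_rE\widehat R_r+R_rE\widehat P_r+\sum_{k\ge2}R_r^kE\widehat R_r^{(1-k)} .
\end{align}
I verify it blockwise.
\begin{itemize}
\item Multiply $E=\widehat\Sigma-\Sigma$ by $P_s$ on the left and $\widehat u_j\otimes\widehat u_j$ on the right. This gives the Sylvester relation
\begin{align}
P_sE\,\widehat u_j\otimes\widehat u_j=(\widehat\lambda_j-\mu_s)\,P_s\,\widehat u_j\otimes\widehat u_j .
\end{align}
\item For $s\neq r$ and $j\notin I_r$, write $\widehat\lambda_j-\mu_s=(\widehat\lambda_j-\mu_r)-(\mu_s-\mu_r)$ and expand $(\widehat\lambda_j-\mu_s)^{-1}$. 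For $j\in I_r$, expand $(\mu_s-\mu_r-(\widehat\lambda_j-\mu_r))^{-1}$ as a geometric series in $(\widehat\lambda_j-\mu_r)/(\mu_s-\mu_r)$.
\item This expansion converges because $|\widehat\lambda_j-\mu_r|\le\delta_rg_r<g_r\le|\mu_s-\mu_r|$, using Lemma \ref{lem_EigenvalueSeparation}.
\end{itemize}
Summing over the blocks produces exactly the terms with $R_r^k$ and $\widehat R_r^{(1-k)}$, including $k\ge2$ paired with negative powers. The $P_r$ block is handled by the term $-P_rE\widehat R_r$. In that case the relation is $P_rE\widehat u_j\otimes\widehat u_j=(\widehat\lambda_j-\mu_r)P_r\widehat u_j\otimes\widehat u_j$ for $j\notin I_r$, so $-P_rE\widehat R_r=-P_r(I-\widehat P_r)$. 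The $(I-P_r)$ part pairs $\widehat P_r$ with the sum over $k\ge1$. Combining the pieces gives $\widehat P_r-P_r$.

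\textbf{Inductive step.} Assume the formula holds for $p$. Identify the part of the remainder that equals $P_r^{(p)}$, and substitute the $p=1$ identity into the last factor $\widehat R_r^{(\cdot)}$.
\begin{itemize}
\item I need a companion expansion for each $\widehat R_r^{(m)}$ in terms of $R_r^{(m)}$ plus $\sum_k R_r^{(k)}E\widehat R_r^{(m+1-k)}$ with a sign. This comes from the same resolvent-type blockwise computation, treating the cases $m>0$, $m=0$ and $m<0$ separately.
\item Replacing $\widehat R_r^{(p-\sum k_i)}$ by $R_r^{(p-\sum k_i)}$ plus this correction is what generates the claimed structure.
\item The replaced terms with $\sum_{i\le p+1}k_i=p$ give exactly $(-1)^{p-1}\cdot(-1)\cdot(\text{sum})=-P_r^{(p)}$. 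The correction terms reproduce the remainder at level $p+1$ with sign $(-1)^p$.
\end{itemize}

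\textbf{Main obstacle.} The hard part is the bookkeeping in this companion identity, especially for $m\le0$, where $\widehat R_r^{(m)}$ involves the near-degenerate block $I_r$. There the geometric expansion relies on Lemma \ref{lem_EigenvalueSeparation}. I also have to check that the signs line up so that $P_r^{(p)}$, with $R_r^{(0)}=-P_r$, comes out exactly. I would first check the case $p=2$ by hand against the explicit $P_r^{(1)}$ given in the paper to fix the signs.
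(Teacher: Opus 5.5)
Your outline has the paper's architecture. The base case comes from $\widehat P_r-P_r=(I-P_r)\widehat P_r-P_r(I-\widehat P_r)$ via the Sylvester relation and a geometric series in $(\widehat\lambda_j-\mu_r)/(\mu_s-\mu_r)$ for $j\in I_r$. The induction then substitutes an expansion of $\widehat R_r^{(m)}$ into the last factor. However, the inductive step rests on companion identities that you neither state correctly nor derive. What is needed is
\[
\widehat R_r^{(m)}=R_r^{(m)}-\sum_{l\ge0}R_r^{(l)}E\,\widehat R_r^{(m+1-l)}\quad(m\ge0),\qquad
\widehat R_r^{(m)}=-\sum_{l\ge0}R_r^{(l)}E\,\widehat R_r^{(m+1-l)}\quad(m<0).
\]
For $m=0$ this is just the base case, since $R_r^{(0)}=-P_r$ and $\widehat R_r^{(0)}=-\widehat P_r$. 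Because $R_r^{(m)}$ enters with a plus sign, the replaced terms give $(-1)^{p-1}\sum_{k_1+\dots+k_{p+1}=p}R_r^{(k_1)}E\cdots ER_r^{(k_{p+1})}=+P_r^{(p)}$, as $P_r^{(p)}$ carries the factor $(-1)^{p+1}$. With your $-P_r^{(p)}$ you would arrive at $\widehat P_r-\sum_{n<p}P_r^{(n)}+P_r^{(p)}$ on the left, and the induction would not close. Likewise, the $k=1$ term in your base-case display is $R_rE\widehat R_r^{(0)}=-R_rE\widehat P_r$, not $+R_rE\widehat P_r$. Finally, convergence under $\delta_r<1$ has to come from the geometric series themselves (ratio at most $\delta_r$, polynomially many tuples). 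You cannot borrow it from Theorem \ref{thm_ProjectorPerturbationSeries}: that theorem needs $\delta_r'<1/4$ and is itself proved from this lemma.

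The substantive gap is the case $m>0$. For $m\le 0$ the companion identity does follow from the blockwise computation you describe, since only $\widehat u_j$ with $j\in I_r$ occur. For $m<0$ the $P_r$-part uses the Sylvester relation times a nonnegative power of $\widehat\lambda_j-\mu_r$, so $\widehat\lambda_j=\mu_r$ is harmless. For $m>0$, however, $\widehat R_r^m$ contains $\widehat u_j$ with $j\in I_s$, $s\neq r$. The diagonal blocks $P_s\,\widehat u_j\otimes\widehat u_j$, $j\in I_s$, cannot be handled by dividing the Sylvester relation by $\widehat\lambda_j-\mu_s$, because Lemma \ref{lem_EigenvalueSeparation} does not keep this quantity away from zero. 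Nor does expanding $(\widehat\lambda_j-\mu_s)^{-1}$ in powers of $(\widehat\lambda_j-\mu_r)/(\mu_s-\mu_r)$ help, since that series diverges once $|\widehat\lambda_j-\mu_r|\ge|\mu_s-\mu_r|$. So your bullet for $s\neq r$, $j\notin I_r$ is superfluous in the base case, where such blocks never appear, and it fails exactly where it would be needed.

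The missing idea is to insert the Sylvester relation into $R_rE\widehat R_r$ and use the partial fraction $\frac{\widehat\lambda_j-\mu_s}{(\mu_s-\mu_r)(\widehat\lambda_j-\mu_r)}=\frac{1}{\mu_s-\mu_r}-\frac{1}{\widehat\lambda_j-\mu_r}$. This never divides by $\widehat\lambda_j-\mu_s$ and gives the resolvent identity $(I-P_r)\widehat R_r=R_r(I-\widehat P_r)-R_rE\widehat R_r$. Iterating it yields $(I-P_r)\widehat R_r^m=R_r^m(I-\widehat P_r)-\sum_{l=1}^mR_r^lE\widehat R_r^{m+1-l}$. You then add $P_r\widehat R_r^m=P_rE\widehat R_r^{m+1}$, which is the Sylvester relation using $\widehat\lambda_j\neq\mu_r$ for $j\notin I_r$. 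Finally, expand $-R_r^m\widehat P_r=-R_r^m(I-P_r)\widehat P_r$ by the base-case geometric series into $-\sum_{l\ge m+1}R_r^lE\widehat R_r^{(m+1-l)}$. Collecting terms gives the companion identity for $m>0$, which is the ingredient your induction is missing.
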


\begin{proof} 
  We prove the statement in Equation \eqref{eq_A_TaylorRemainder} via induction
  over \( p \in \mathbb{N} \).
  For \( p = 1 \), we write \( \widehat{P}_{r} - P_r \) as
  \begin{align}
        \widehat{P}_{r} - P_{r} 
    & = 
        ( I - P_{r} ) \widehat{P}_{r} - P_{r} ( I - \widehat{P}_{r} ).
  \end{align}
  Using \begin{align}
  \label{eq_ProjectureToEigenvalueEquality}
  ( u_{j} \otimes u_{j} ) E ( \widehat{u}_{k} \otimes \widehat{u}_{k} ) 
  & = 
  ( \widehat{ \lambda }_{k} - \lambda_{j} ) 
  ( u_{j} \otimes u_{j} ) ( \widehat{u}_{k} \otimes \widehat{u}_{k} ) 
\end{align}
  and Lemma \ref{lem_EigenvalueSeparation}, the second term on the right-hand is given by
  \begin{align}
      P_{r} ( I - \widehat{P}_{r} ) 
    = 
      \sum_{ j \in I_{r} } 
      \sum_{ s \ne r } \sum_{ k \in I_{s} }
      ( \widehat{ \lambda }_{k} - \mu_{r} )^{-1}
      ( u_{j} \otimes u_{j} ) E ( \widehat{u}_{k} \otimes \widehat{u}_{k} ) 
    = 
      P_{r} E \widehat{R}_{r}.
  \end{align}
  Further, with analogous reasoning, the first term on the right-hand side is 
  given by
  \begin{align}
      ( I - P_{r} ) \widehat{P}_{r} 
    =
      \sum_{ k \in I_{r} } 
      \sum_{ s \ne r } ( \widehat{ \lambda }_{k} - \mu_{s} )^{-1} P_{s}
      E ( \widehat{u}_{k} \otimes \widehat{u}_{k} ) 
    = 
      - \sum_{ k \in I_{r} } \sum_{ l = 1 }^{ \infty } 
      ( \widehat{ \lambda }_{k} - \mu_{r} )^{ l - 1 } 
      R_{r}^{l} E ( \widehat{u}_{k} \otimes \widehat{u}_{k} ). 
    \notag
  \end{align}
  For the last equality, we have used a geometric series expansion of
  \( 
      ( \widehat{ \lambda }_{k} - \mu_{s} )^{-1} 
    = 
      - ( \mu_{s} - \mu_{r} )^{-1} / 
        ( 1 - ( \widehat{ \lambda }_{k} - \mu_{r} ) / ( \mu_{s} - \mu_{r} ) )
  \),
  which is well defined due to Lemma \ref{lem_EigenvalueSeparation}.
  It follows that 
  \begin{align}
            \widehat{P}_{r} - P_{r} 
    & = 
            \sum_{ l = 1 }^{ \infty } 
            R_{r}^{l} E \widehat{R}_{r}^{ ( 1 - l ) }
          + 
            R_{r}^{ (0) } E \widehat{R}_{r}^{ (1) }
    = 
            \sum_{ l = 0 }^{ \infty } 
            R_{r}^{l} E \widehat{R}_{r}^{ ( 1 - l ) },
    \notag
  \end{align}
  which establishes the induction hypothesis.

  For the induction step, we assume that the statement is true for a fixed 
  \( p \) and consider the fact that the induction hypothesis can be written as
  \begin{align}
          \widehat{R}_{r}^{ (0) } 
    & =
          R_{r}^{ (0) } 
        - 
          \sum_{l = 0}^{ \infty } 
          R_{r}^{ (l) } E \widehat{R}_{r}^{ (1 - l) }.
  \end{align}
  We will show that analogously
  \begin{align}
    \label{eq_RKSmallerZero}
          \widehat{R}_{r}^{ (k) } 
    & = 
        \qquad \
        - \sum_{l = 0}^{ \infty } 
          R_{r}^{ (l) } E \widehat{R}_{r}^{ (k + 1 - l) },
        \qquad k < 0;
    \\
    \label{eq_RKLargerZero}
          \widehat{R}_{r}^{ (k) } 
    & = 
          R_{r}^{ (k) } 
        - 
          \sum_{l = 0}^{ \infty } 
          R_{r}^{ (l) } E \widehat{R}_{r}^{ (k + 1 - l) }, 
        \qquad k \ge 0.
  \end{align}
  Setting \( k: = p - k_{1} - \dots - k_{p} \), it then follows from Equations
  \eqref{eq_RKSmallerZero}, \eqref{eq_RKLargerZero} and the induction hypothesis
  that 
  \begin{align}
         \widehat{P}_{r} - \sum_{n = 0}^{p - 1} P_{r}^{ (n) } 
    & =  
         (-1)^{p - 1} \sum_{k < 0} 
         \sum_{ \substack{ k_{1}, \dots, k_{p} \ge 0 \\
                           k_{1} + \dots + k_{p} = p - k 
                         }
              }
         R_{r}^{ ( k_{1} ) } E \dots E R_{r}^{ ( k_{p} ) } E
         \Big( - \sum_{l = 0}^{ \infty } 
                 R_{r}^{ (l) } E \widehat{R}_{r}^{ (k + 1 - l) } 
         \Big)
    \notag
    \\
    & + 
            (-1)^{p - 1} \sum_{k \ge 0} 
            \sum_{ \substack{ k_{1}, \dots, k_{p} \ge 0 \\
                              k_{1} + \dots + k_{p} = p - k 
                            }
              }
            R_{r}^{ ( k_{1} ) } E \dots E R_{r}^{ ( k_{p} ) } E
            \Big( 
              R_{r}^{ (k) } 
            - 
              \sum_{l = 0}^{ \infty } 
              R_{r}^{ (l) } E \widehat{R}_{r}^{ ( k + 1 - l ) } 
            \Big)
    \notag
    \\
    & = 
            (-1)^{p} 
            \sum_{k_{1}, \dots, k_{p + 1} \ge 0} 
            R_{r}^{ ( k_{1} ) } E \dots E R_{r}^{ ( k_{p + 1} ) } E 
            \widehat{R}_{r}^{ (p + 1 - k_{1} - \dots - k_{p + 1}) } 
            +
            P_{r}^{ (p) },
    \notag
  \end{align}
  which yields the statement in Lemma \ref{lem_TaylorRemainder} by rearranging
  the terms.

  It remains to be shown that Equation \eqref{eq_RKSmallerZero} and
  \eqref{eq_RKLargerZero} are true for \( k < 0 \) and \( k > 0 \) respectively. 
  For \( k < 0 \), we have that
  \begin{align}
          \widehat{R}_{r}^{ (k) } 
    & = 
        - \sum_{ j \in I_{r} } 
          ( \widehat{ \lambda }_{j} - \mu_{r} )^{-k} 
          P_{r} ( \widehat{u}_{j} \otimes \widehat{u}_{j} )
        - \sum_{ j \in I_{r} } 
          ( \widehat{ \lambda }_{j} - \mu_{r} )^{-k} 
          ( I - P_{r} ) ( \widehat{u}_{j} \otimes \widehat{u}_{j} ),
    \notag 
  \end{align}
  where, using Equation \eqref{eq_ProjectureToEigenvalueEquality}, the first
  term on the right-hand side is equal to
  \begin{align}
      - \sum_{ j \in I_{r} } 
        ( \widehat{ \lambda }_{j} - \mu_{r} )^{ - k - 1 } 
        P_{r} E ( \widehat{u}_{j} \otimes \widehat{u}_{j} ) 
    = 
      - R_{r}^{ (0) } E \widehat{R}_{r}^{ ( k + 1 ) }.
  \end{align}
  Reusing the geometric series expansion from before, the second term on the
  right-hand side can be written as
  \begin{align}
    & \ \ \ \
        - \sum_{s \ne r} \sum_{ j \in I_{r} } \sum_{ j' \in I_{s} } 
          ( \widehat{ \lambda }_{j} - \mu_{r} )^{-k} 
          ( \widehat{ \lambda }_{j} - \mu_{s} )^{-1}
          ( u_{j'} \otimes u_{j'} ) E ( \widehat{u}_{j} \otimes \widehat{u}_{j} ) 
    \\
    & = 
        - \sum_{ j \in I_{r} } ( \widehat{ \lambda }_{j} - \mu_{r} )^{-k} 
          \sum_{ s \ne r } 
          ( \widehat{ \lambda }_{j} - \mu_{s} )^{-1}
          P_{s} E ( \widehat{u}_{j} \otimes \widehat{u}_{j} )
    \notag
    \\
    & =
          \sum_{ j \in I_{r} } \sum_{l = 1}^{ \infty } 
          ( \widehat{ \lambda }_{j} - \mu_{r} )^{-k - 1 + l} 
          R_{r}^{ (l) } E ( \widehat{u}_{j} \otimes \widehat{u}_{j} ) 
    = 
        - \sum_{l = 1}^{ \infty } 
          R_{r}^{ (l) } E \widehat{R}_{r}^{ (k + 1 - l) }.
    \notag
  \end{align}
  Together, this yields Equation \eqref{eq_RKSmallerZero}.

  For \( k > 0 \), we write
  \begin{align}
    \label{eq_RKLargerZeroExpanded_1}
        \widehat{R}_{r}^{ (k) } 
    & = 
        ( I - P_{r} ) \widehat{R}_{r}^{k} + P_{r} \widehat{R}_{r}^{k}.
  \end{align}
  Focusing on the first term on the right-hand side, we note that inserting \eqref{eq_ProjectureToEigenvalueEquality} into \( R_{r} E \widehat{R}_{r} \) yields \( ( I - P_{r} ) \widehat{R}_{r} = R_{r} ( I - \widehat{P}_{r} ) - R_{r} E \widehat{R}_{r} \)
  and inductively
  \begin{align}
    \label{eq_RKLargerZeroExpanded_2}
            ( I - P_{r} ) \widehat{R}_{r}^{k} 
    & = 
            R_{r}^{k} ( I - \widehat{P}_{r} ) 
          - 
            \sum_{ l = 1 }^{k} 
            R_{r}^{l} E \widehat{R}_{r}^{k + 1 - l}.
  \end{align}
  Together, Equations \eqref{eq_RKLargerZeroExpanded_1} and
                      \eqref{eq_RKLargerZeroExpanded_2} then amount to
  \begin{align}
            \widehat{R}_{r}^{k} 
    & = 
            R_{r}^{k} ( I - \widehat{P}_{r} ) 
          - 
            \sum_{ l = 1 }^{k} 
            R_{r}^{l} E \widehat{R}_{r}^{k + 1 - l}
          + 
            P_{r} \widehat{R}_{r}^{k}
    \\
    & = 
            R_{r}^{k} 
          - 
            \sum_{ l = k + 1 }^{ \infty } 
            R_{r}^{l} E \widehat{R}_{r}^{(l - 1 - k)} 
          - 
            \sum_{ l = 1 }^{k} 
            R_{r}^{l} E \widehat{R}_{r}^{k + 1 - l}
          + 
            P_{r} \widehat{R}_{r}^{k}
    \notag
    \\
    & = 
    \notag
        R_{r}^{k} 
      - 
        \sum_{ l = 1 }^{ \infty } 
        R_{r}^{l} E \widehat{R}_{r}^{ (l - 1 - k) } 
      + 
        P_{r} E \widehat{R}_{r}^{k + 1}
    = 
        R_{r}^{k} 
      - 
        \sum_{l = 0}^{ \infty } 
        R_{r}^{l} E \widehat{R}_{r}^{ (l - 1 - k) },
  \end{align}
  where again, we have used Equation \eqref{eq_ProjectureToEigenvalueEquality}
  and the geometric series expansion to obtain
  \begin{align}
        - R_{r}^{k} \widehat{P}_{r} 
    & = 
        - R_{r}^{k} ( I - P_{r} ) \widehat{P}_{r} 
    = 
        - R_{r}^{k} 
          \sum_{ s \ne r } 
          \sum_{ j  \in I_{s} }
          \sum_{ j' \in I_{r} } 
          ( \widehat{ \lambda }_{j'} - \lambda_{j} )^{-1} 
          ( u_{j} \otimes u_{j} ) E 
          ( \widehat{u}_{j'} \otimes \widehat{u}_{j'}) 
    \\
    & = 
          R_{r}^{k}
          \sum_{ j' \in I_{r} } 
          \sum_{ l = 1 }^{ \infty } 
          ( \widehat{ \lambda }_{ j' } - \mu_{r} )^{ l - 1 } R_{r}^{l}
          E ( \widehat{u}_{ j' } \otimes \widehat{u}_{ j' }) 
    = 
        - \sum_{ l = k + 1 }^{ \infty } 
          R_{r}^{l} E \widehat{R}_{r}^{ ( l - 1 - k ) }.
    \notag
  \end{align}
  This finishes the proof.
\end{proof}


\subsection{Norm estimates and contraction inequalities}
\label{ssec_NormEstimatesAndContractionInequalities}

In order to bound the Taylor remainder, it will be necessary to enumerate and estimate the summands in the terms \( P_{r}^{ ( n ) } \).

\begin{lemma}
  \label{lem_NumberOfTuples}
  The number of \( ( n + 1 ) \)-tuples 
  \( ( k_{1},  \dots,  k_{n + 1} ) \in \mathbb{N}_{0}^{ n + 1 } \) such that
  \( k_{1} + \dots + k_{ n + 1 } = m \) is equal to 
  \( \binom{n + m}{n} \le 2^{ n + m } \). Moreover, the number \( ( n + 1 ) \)-tuples 
  \( ( k_{1},  \dots,  k_{n + 1} ) \in \mathbb{N}_{0}^{ n + 1 } \) such that \(k_1\geq 1\) and
  \( k_{1} + \dots + k_{ n + 1 } = m \) is equal to 
  \( \binom{n + m -1}{n} \le 2^{ n + m - 1} \).  
\end{lemma}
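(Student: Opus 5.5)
The first claim is the classical stars-and-bars count. I will encode a tuple $(k_1,\dots,k_{n+1})\in\mathbb{N}_0^{n+1}$ with $k_1+\dots+k_{n+1}=m$ as a word of length $n+m$ over the alphabet $\{\star,|\}$: write $k_1$ stars, then a bar, then $k_2$ stars, a bar, and so on, ending with $k_{n+1}$ stars. The word then contains exactly $m$ stars and $n$ bars. Conversely, every such word determines a unique tuple, obtained by reading off the lengths of the star-runs between consecutive bars. This bijection shows that the number of tuples equals the number of ways to place $n$ bars among $n+m$ positions, which is $\binom{n+m}{n}$.

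For the bound, I will use that a single binomial coefficient is at most the sum of its row:
\begin{align}
\binom{n+m}{n}\le \sum_{i=0}^{n+m}\binom{n+m}{i}=2^{n+m}.
\end{align}

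For the second claim, I will reduce to the first by the substitution $k_1'=k_1-1$. This is a bijection between tuples with $k_1\ge 1$ and $k_1+\dots+k_{n+1}=m$ and tuples $(k_1',k_2,\dots,k_{n+1})\in\mathbb{N}_0^{n+1}$ with sum $m-1$. Applying the first part with $m$ replaced by $m-1$ gives the count $\binom{n+m-1}{n}$, and the same row-sum estimate gives the bound $2^{n+m-1}$.

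The only edge case is $m=0$, where there is no tuple with $k_1\ge 1$. The formula still holds there under the convention that $\binom{n-1}{n}=0$ for $n\ge 1$. I will also note the degenerate case $n=m=0$ separately, since the convention $\binom{-1}{0}$ is ambiguous. I expect no real obstacle; the only care needed is in these boundary conventions.
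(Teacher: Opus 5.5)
Your proof is correct, and it is essentially the argument the paper relies on: the paper states this lemma without proof, as the standard stars-and-bars count, and your bijection, the shift $k_1\mapsto k_1-1$ for the second count, and the row-sum bound $\binom{n+m}{n}\le 2^{n+m}$ are exactly that reasoning. Your boundary remark is also apt: when $m=0$ there is no tuple with $k_1\ge 1$, so the bound $2^{n+m-1}$ holds however $\binom{n-1}{n}$ (or $\binom{-1}{0}$) is interpreted.
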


\begin{lemma}[Elementary norm estimates]
  \label{lem_ElementaryNormEstimates}
  For \( n \ge 1 \) and tuple \( (k_{1}, \dots, k_{n + 1}) \in \mathbb{N}_{0}^{n + 1} \) with \( k_{1} + \dots + k_{n + 1} = m \), it holds that
  \begin{enumerate}
    \item[(i)]
      \( 
          \| R_{r}^{ (k_{1}) } E \cdots E R_{r}^{ (k_{n + 1}) } \|_{ \infty } 
      \le g_{r}^{n - m} \delta_{r}'^{n}.
      \) 
  \end{enumerate}
  Moreover, if \( k_{a} = 0 \) for some \( 1 \le a \le n + 1 \) and \( m \ge 1 \), then 
  \begin{enumerate}[label=(\roman*)]
    \item[(ii)]
      \( 
            \| R_{r}^{ ( k_{1} ) } E \dots E R_{r}^{ ( k_{n + 1} ) } \|_{2}
        \le g_{r}^{ n - m - 1 / 2 } \| | R_{r} |^{1/2} E P_{r} \|_{2}
            \delta_{r}'^{n - 1}
      \)
  \end{enumerate}
  Analogous results hold for \( R_{r}^{ ( k_{1} ) } \) or \( R_{r}^{ ( k_{ n + 1 }) } \) replaced by \( | R_{r} |^{1/2} \) and setting \( k_{1}, k_{n + 1} = 1/2 \).
\end{lemma}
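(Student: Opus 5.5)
The plan is to write every factor $R_r^{(k)}$ in terms of the weighted pieces that enter $\delta_r'$, namely $|R_r|^{1/2}E|R_r|^{1/2}$, $|R_r|^{1/2}EP_r$ and $P_rEP_r$. Throughout I use the spectral facts that $R_r$ and $P_r$ commute, $R_rP_r=0$, $P_r^2=P_r$, and
\[
\|\,|R_r|^{1/2}\|_\infty\le g_r^{-1/2}.
\]

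For $k\ge1$ I write $R_r^k=|R_r|^{1/2}\,U_k\,|R_r|^{1/2}$, where $U_k:=\operatorname{sgn}(R_r)\,|R_r|^{k-1}$. Then $\|U_k\|_\infty\le g_r^{-(k-1)}$. For $k=0$ I write $R_r^{(0)}=-P_rP_r$.

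Substituting these factorizations, the product $R_r^{(k_1)}E\cdots ER_r^{(k_{n+1})}$ becomes an alternating product of bounded "outer" operators and $n$ "inner" operators of the form $AEB$, where $A,B\in\{|R_r|^{1/2},P_r\}$. The outer operators are $U_{k_i}$ for $k_i\ge1$ and $-P_r$ (or $I$) for $k_i=0$.

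For (i) I bound each inner factor using the definition of $\delta_r'$:
\[
\|\,|R_r|^{1/2}E|R_r|^{1/2}\|_\infty\le\delta_r',\qquad \|\,|R_r|^{1/2}EP_r\|_\infty\le g_r^{1/2}\delta_r',\qquad \|P_rEP_r\|_\infty\le g_r\delta_r'.
\]
The outer factors contribute $\prod_{k_i\ge1}g_r^{-(k_i-1)}$. It remains to count exponents of $g_r$. Each $P_r$ sitting next to an $E$ contributes $g_r^{1/2}$. Each zero index $k_i=0$ creates a $P_r$ on both adjacent sides, except at the two ends where only one side is adjacent to an $E$. Hence the total exponent equals $n$ minus $\sum_{k_i\ge1}(k_i-1)$ minus the number of nonzero $k_i$ $=n-m$. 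Indeed, each nonzero index contributes $-(k_i-1)$ from $U_{k_i}$ and $-1$ from the two missing $g_r^{1/2}$ factors it would otherwise supply, giving $-k_i$ in total. This yields $g_r^{n-m}\delta_r'^n$.

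For (ii) I use $\|AB\|_2\le\|A\|_2\|B\|_\infty$ and its mirror version. Since $k_a=0$ and $m\ge1$, some zero index is adjacent, through the chain, to a nonzero index. More precisely, there exist consecutive indices $i,i+1$ with one of $k_i,k_{i+1}$ equal to $0$ and the other $\ge1$. The inner factor between them is then $P_rE|R_r|^{1/2}$ or $|R_r|^{1/2}EP_r$; these have equal Hilbert–Schmidt norm by self-adjointness. I keep this one factor in $\|\cdot\|_2$ and bound all others in operator norm as in (i). Compared to (i), one factor $g_r^{1/2}\delta_r'$ is replaced by $\|\,|R_r|^{1/2}EP_r\|_2$. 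This gives the exponent $n-m-1/2$ and $\delta_r'^{\,n-1}$.

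The analogous statements with an end factor $|R_r|^{1/2}$ follow the same way. I simply do not split off $U$ at that end, and I count that end as $k=1/2$ contributing $g_r^{-1/2}$ in the exponent bookkeeping.

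The main obstacle is the exponent bookkeeping at the boundaries and in runs of consecutive zeros. For such a run, $P_rEP_rE\cdots$ gives $g_r\delta_r'$ per factor, which is consistent with the count above. I would verify the bookkeeping by induction on $n$, peeling off the last factor $ER_r^{(k_{n+1})}$ and tracking whether the exposed end is $P_r$ or $|R_r|^{1/2}$.
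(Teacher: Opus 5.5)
Your proposal is correct and is essentially the paper's proof. The paper also splits each $R_r^{k}$ through $|R_r|^{1/2}$ on both sides (its operators $S_r^{(k)}$), bounds the $n$ resulting factors $S_r^{(k_a)}ES_r^{(k_{a+1})}$ by $\delta_r'$ times the appropriate power of $g_r$, counts zero indices to reach the exponent $n-m$, and for (ii) keeps one factor $|R_r|^{1/2}EP_r$, which must occur next to a zero index, in Hilbert--Schmidt norm.

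Two fixes are needed, neither affecting the argument. First, for even $k$ the middle operator must be $U_k=\operatorname{sgn}(R_r)^k|R_r|^{k-1}$ rather than $\operatorname{sgn}(R_r)|R_r|^{k-1}$; the norm bound $g_r^{-(k-1)}$ is unchanged. Second, for a nonzero index at an end, only one of its two $g_r^{-1/2}$ losses comes from an inner factor. The other comes from the bare outer factor $|R_r|^{1/2}$ with $\|\,|R_r|^{1/2}\|_\infty\le g_r^{-1/2}$, which your list of outer operators omits.
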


\begin{proof}
  For the statement in (i), we focus on the case where \( k_{1}, k_{ n + 1 } \ge 1 \). 
  The other cases follow analogously.
  Set
  \begin{align}
    S_{r}^{ (k) }: = \begin{cases} 
                       P_{r},           & k = 0, \\
                       | R_{r} |^{1/2}, & k > 0.
                     \end{cases}
  \end{align}
  Then,
  \begin{align}
    \label{eq_C_ProductDecomposition}
          \| R_{r}^{ (k_{1}) } E \dots E R_{r}^{ (k_{ n + 1 }) } \|_{\infty} 
    & \le 
          g_{r}^{-\sum_{a = 1}^{n + 1} (k_{a} - 1)^{+} - 1} 
          \prod_{ a = 1 }^{n}
          \| S_{r}^{ (k_{a}) } E S_{r}^{ (k_{a + 1}) } \|_{\infty},
  \end{align}
  where \( \sum_{a = 1}^{n + 1} (k_{a} - 1)^{+} \) is the sum of the values of
  the \( ( k_{a} )_{a} \) in excess of one.
  The factors in the last product are of the form
  \begin{align}
    \label{eq_C_ProductDecompositionTerms}
    \| | R_{r} |^{1/2} E | R_{r} |^{1/2} \|_{\infty}, \qquad 
    \| | R_{r} |^{1/2} E P_{r}           \|_{\infty} \qquad \text{or} \qquad 
    \| P_{r}           E P_{r}           \|_{\infty}
  \end{align}
  and the number of values \( k_{a} \) equal to zero is given
  \begin{align}
    | \{ a: k_{a} = 0 \} | & = n + 1 - m 
                             + \sum_{a = 1}^{n + 1} (k_{a} - 1)^{+}, 
  \end{align}
  where \( n + 1 - m \) is the minimal number of zeros possible and the sum is
  the number of zeros for the particular instance of the \( ( k_{a} )_{a} \)
  exceeding the minimum.
  Redistributing, \( | \{ a: k_{a} = 0 \} | \) many factors \( g_{r}^{-1} \)
  onto the terms in Equation \eqref{eq_C_ProductDecompositionTerms} yields
  \begin{align}
        \| R_{r}^{ ( k_{1} ) } E \dots E R_{r}^{ ( k_{n + 1} ) } \|_{\infty} 
    \le 
        g_{r}^{n - m} \delta_{r}'^{n}.
  \end{align}

  For the statement in (ii), we note that for \( k_{a} = 0 \), there has to
  exist a term \( | R_{r} |^{1/2} E P_{r} \) in the product from Equation
  \eqref{eq_C_ProductDecomposition}.
  By consecutively applying the inequality 
  \( \| A B \|_{2} \le \| A \| _{\infty} \| B \|_{2} \), this yields (ii).
\end{proof}

\noindent Finally, we need to quantify the contractive properties of the
operators \( \widehat{R}^{ ( k ) } \), \( k \in \mathbb{Z} \).

\begin{lemma}[Contraction]
  \label{lem_Contraction}
  If \( \delta_{r} < 1/2 \), then
  \begin{enumerate}[label=(\roman*)]
    \item \(    \| |R_{r}|^{-1/2} \widehat{P}_{r} \|_{2}
            \le \| |R_{r}|^{1/2} E P_{r} \|_{2} / (1 - 2 \delta_{r})
          \);

    \item \(    \| P_{r} \widehat{R}_{r}^{k} \|_{2} 
            \le (1 - \delta_{r})^{-k} g_{r}^{- k - 1/2}  
                \| | R_{r} |^{1/2} E P_{r} \|_{2} / (1 - 2 \delta_{r})
          \) for any \( k \ge 1 \);

    \item \(    \| | R_{r} |^{-1/2} \widehat{R}_{r}^{k}  \|_{ \infty } 
            \le (1 - \delta_{r})^{-k - 1} g_{r}^{-k + 1/2} 
          \) for any \( k \ge 1 \).
  \end{enumerate}
\end{lemma}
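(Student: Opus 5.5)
The plan is to prove the three bounds separately. Each rests on two facts: the intertwining identity \eqref{eq_ProjectureToEigenvalueEquality}, and the eigenvalue separation of Lemma~\ref{lem_EigenvalueSeparation}. For $\delta_r<1/2$ the latter gives $|\widehat\lambda_k-\mu_r|\le\delta_r g_r\le \delta_r|\mu_s-\mu_r|$ for $k\in I_r$ and $s\ne r$. It also gives $|\widehat\lambda_j-\mu_r|\ge(1-\delta_r)g_r$ for $j\notin I_r$. Throughout, $|R_r|^{-1/2}=\sum_{s\ne r}|\mu_s-\mu_r|^{1/2}P_s$ denotes the pseudo-inverse, which vanishes on the range of $P_r$.

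For (i), I work column by column. For $k\in I_r$ put $x_k:=|R_r|^{-1/2}\widehat u_k$. By \eqref{eq_ProjectureToEigenvalueEquality}, for $s\ne r$,
\[
|\mu_s-\mu_r|^{1/2}P_s\widehat u_k=\frac{|\mu_s-\mu_r|}{\widehat\lambda_k-\mu_s}\,|R_r|^{1/2}P_sE\widehat u_k .
\]
Split $E\widehat u_k=EP_r\widehat u_k+E|R_r|^{1/2}x_k$, which is valid because $(I-P_r)\widehat u_k=|R_r|^{1/2}x_k$. This yields
\[
x_k=D_k\big(|R_r|^{1/2}EP_r\widehat u_k+|R_r|^{1/2}E|R_r|^{1/2}x_k\big),
\]
where $D_k=\sum_{s\ne r}\frac{|\mu_s-\mu_r|}{\widehat\lambda_k-\mu_s}P_s$. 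By separation, $|\widehat\lambda_k-\mu_s|\ge(1-\delta_r)|\mu_s-\mu_r|$, so $\|D_k\|_\infty\le(1-\delta_r)^{-1}$. Taking $\ell^2$-sums over $k\in I_r$ and using $\|\,|R_r|^{1/2}E|R_r|^{1/2}\|_\infty\le\delta_r'\le\delta_r$ gives
\[
\|X\|_2\le(1-\delta_r)^{-1}\big(\|\,|R_r|^{1/2}EP_r\|_2+\delta_r\|X\|_2\big),\qquad X=|R_r|^{-1/2}\widehat P_r .
\]
Here I used $\|\,|R_r|^{1/2}EP_r\widehat P_r\|_2\le\|\,|R_r|^{1/2}EP_r\|_2$. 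Rearranging gives the factor $1/(1-2\delta_r)$, and $\|X\|_2$ is finite because $\widehat P_r$ has finite rank.

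For (ii), note that $\widehat R_r$ vanishes on the range of $\widehat P_r$, so $P_r\widehat R_r^k=P_r(I-\widehat P_r)\widehat R_r^k$. Separation gives $\|\widehat R_r^k\|_\infty\le((1-\delta_r)g_r)^{-k}$. Since $P_r$ and $\widehat P_r$ are orthogonal projections of equal finite rank $m_r$, we have $\|P_r(I-\widehat P_r)\|_2=\|(I-P_r)\widehat P_r\|_2$. The latter is at most $\|\,|R_r|^{1/2}\|_\infty\|\,|R_r|^{-1/2}\widehat P_r\|_2\le g_r^{-1/2}\|\,|R_r|^{-1/2}\widehat P_r\|_2$. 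Inserting (i) gives exactly the stated bound.

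For (iii), I factor $|R_r|^{-1/2}\widehat R_r^k=\big(|R_r|^{-1/2}|\widehat R_r|^{1/2}\big)\,\mathrm{sgn}(\widehat R_r)\,|\widehat R_r|^{k-1/2}$. The last factor has norm at most $((1-\delta_r)g_r)^{-k+1/2}$. It then remains to show $\|\,|R_r|^{-1/2}|\widehat R_r|^{1/2}\|_\infty\le(1-\delta_r)^{-1/2}$. Squaring, this is the quadratic-form comparison
\[
\langle |\Sigma-\mu_r|(I-P_r)w,w\rangle\le(1-\delta_r)^{-1}\langle|\widehat\Sigma-\mu_r|w,w\rangle\quad\text{for } w\in\operatorname{ran}(I-\widehat P_r).
\]
I would derive this from $|\langle Ew,w\rangle|\le\delta_r\langle(|\Sigma-\mu_r|(I-P_r)+g_rP_r)w,w\rangle$, which is a reformulation of the definition of $\delta_r$. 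I would treat the blocks $s<r$ and $s>r$ separately, so that $|\cdot|$ becomes $\pm(\cdot)$ on the corresponding spectral subspaces of $\Sigma$ and $\widehat\Sigma$; separation ensures these subspaces are aligned. The $P_r$-component of $w$ must be absorbed using a bound in the spirit of (i). This sign and cross-term bookkeeping is the main obstacle. If the clean constant fails, a fallback is an expansion analogous to (i) applied to the columns $|R_r|^{-1/2}\widehat u_j$, $j\notin I_r$. That fallback produces the extra $(1-\delta_r)^{-1}$ factor that appears in the stated exponent $-k-1$.
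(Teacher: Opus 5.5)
Your proofs of (i) and (ii) are correct and essentially follow the paper: you do the Hilbert--Schmidt bookkeeping column by column rather than block by block, and in (ii) you make explicit the identity $\|P_r(I-\widehat P_r)\|_2=\|(I-P_r)\widehat P_r\|_2$, which the paper uses tacitly.

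The gap is in (iii). The inequality your main route reduces to, $\||R_r|^{-1/2}|\widehat R_r|^{1/2}\|_\infty\le(1-\delta_r)^{-1/2}$, is false. Take $\Sigma-\mu_r=\operatorname{diag}(1,0,-\beta)$ on $\mathbb{R}^3$, with $\mu_r$ large enough that all operators are positive. Then $g_r=1$ and $T:=|R_r|^{1/2}+g_r^{-1/2}P_r=\operatorname{diag}(1,1,\beta^{-1/2})$. Put $E=T^{-1}FT^{-1}$, where $F$ vanishes on $e_2$ and acts as $\delta\bigl(\begin{smallmatrix}-c&s\\ s&c\end{smallmatrix}\bigr)$ on $\operatorname{span}(e_1,e_3)$, with $c=\cos\theta$, $s=\sin\theta$. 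Then $\delta_r=\|F\|_\infty=\delta$ and $\widehat P_r=P_r$. A direct computation shows that, as $\beta\to\infty$, the best constant in your quadratic-form comparison tends to $\frac{1}{1-\delta c}\bigl(1+\frac{\delta s}{\sqrt{1-2\delta c+\delta^2}}\bigr)$. For $\theta=\pi/4$ this is $1+\sqrt2\,\delta+O(\delta^2)$; for $\delta=0.4$ it is about $1.91$, whereas $(1-\delta)^{-1}\approx 1.67$. The failure comes from exactly the cross term you flagged. For $w$ that mixes the parts of $\widehat\Sigma$ above and below $\mu_r$, $\langle|\Sigma-\mu_r|w,w\rangle$ picks up a first-order contribution when the two neighbouring gaps are very different. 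To prove the stated bound this way you would need the weaker constant $(1-\delta_r)^{-3}$, and your sketch does not establish it.

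Your fallback does not close the gap either. For a column $\widehat u_j$ with $j\in I_s$, $s\ne r$, the analogue of your $D_k$ contains the factor $|\mu_s-\mu_r|/(\widehat\lambda_j-\mu_s)$. This is unbounded, and undefined when $E=0$. Moreover, multipliers that change from column to column give no operator-norm control over infinitely many columns.

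The missing idea is the second resolvent identity for the reduced resolvents,
$(I-P_r)\widehat R_r=R_r(I-\widehat P_r)-R_rE\widehat R_r$.
It follows from $R_r(\Sigma-\mu_r)=I-P_r$ and $(\widehat\Sigma-\mu_r)\widehat R_r=I-\widehat P_r$; it is \eqref{eq_RKLargerZeroExpanded_2} with $k=1$. Column-wise it divides by $\widehat\lambda_j-\mu_r$, which is bounded below by $(1-\delta_r)g_r$, instead of by $\widehat\lambda_j-\mu_s$. And it is a genuine operator identity.

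Multiply it by $|R_r|^{-1/2}$ and write $E\widehat R_r=EP_r\widehat R_r+E|R_r|^{1/2}Y$ with $Y:=|R_r|^{-1/2}\widehat R_r$, which is bounded a priori. This gives $\|Y\|_\infty\le g_r^{-1/2}+g_r^{-1/2}\delta_r'/(1-\delta_r)+\delta_r'\|Y\|_\infty$. Absorbing the last term and using $\delta_r'\le\delta_r$ yields $\|Y\|_\infty\le g_r^{-1/2}(1-\delta_r)^{-2}$. Then $\||R_r|^{-1/2}\widehat R_r^k\|_\infty\le\|Y\|_\infty\|\widehat R_r\|_\infty^{k-1}$ gives exactly the stated bound. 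This is the same absorption scheme as in your (i), run in operator norm, and it never compares $|\Sigma-\mu_r|$ with $|\widehat\Sigma-\mu_r|$.
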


\begin{proof}
  For (i), Equation \eqref{eq_ProjectureToEigenvalueEquality} yields that for
  \( s \ne r \),
  \begin{align}
          \| P_{s} \widehat{P}_{r} \|_{2}^{2} 
    & = 
          \sum_{ k \in I_{s} } \sum_{ j \in I_{r} } 
          \| ( u_{k} \otimes u_{k} ) 
             ( \widehat{u}_{j} \otimes \widehat{u}_{j}) 
          \|_{2}^{2}
    = 
          \sum_{ k \in I_{s} } \sum_{ j \in I_{r} } 
          \frac{ \| ( u_{k} \otimes u_{k} ) E
                    ( \widehat{u}_{j} \otimes \widehat{u}_{j}) 
                 \|_{2}^{2}
               }{ ( \widehat{ \lambda }_{j} - \lambda_{k} )^{2} } 
    \\
    & \le 
          \frac{1}{ ( 1 - \delta_{r} )^{2} }
          \sum_{ k \in I_{s} } \sum_{ j \in I_{r} } 
          \frac{ \| ( u_{k} \otimes u_{k} ) E
                    ( \widehat{u}_{j} \otimes \widehat{u}_{j}) 
                 \|_{2}^{2}
               }{ ( \lambda_{j} - \lambda_{k} )^{2} } 
    = 
          \frac{1}{ ( 1 - \delta_{r} )^{2} }
          \frac{ \| P_{s} E \widehat{P}_{r} \|_{2}^{2} }
               { ( \mu_{s} - \mu_{r})^{2} }, 
    \notag
  \end{align}
  where Lemma \ref{lem_EigenvalueSeparation} implies the first inequality and
  also guarantees that all denominators are well defined.
  Consequently, 
  \begin{align}
    \label{eq_Contraction_i_1}
        \| | R_{r} |^{-1/2} \widehat{P}_{r} \|_{2} 
    & = 
        \| \sum_{ s \ne r } | \mu_{s} - \mu_{r} |^{1/2} P_{s} 
           \widehat{P}_{r} 
        \|_{2}
    \le 
        \frac{1}{ 1 - \delta_{r} } 
        \| | R_{r} |^{1/2} E \widehat{P}_{r} \|_{2}. 
  \end{align}
  Using \( I = P_{r} + I - P_{r} \) and 
  \( I - P_{r} = | R_{r} |^{1/2} | R_{r} |^{-1/2} \), we have
  \begin{align}
            \| | R_{r} |^{1/2} E \widehat{P}_{r} \|_{2}
    & \le
            \| | R_{r} |^{1/2} E       P_{r}   \widehat{P}_{r} \|_{2}
          + 
            \| | R_{r} |^{1/2} E ( I - P_{r} ) \widehat{P}_{r} \|_{2}
    \\
    & \le 
            \| | R_{r} |^{1/2} E P_{r} \|_{2}
          + 
            \| | R_{r} |^{1/2} E | R_{r} |^{1/2} \|_{\infty}
            \| | R_{r} |^{-1/2} \widehat{P}_{r} \|_{2}
    \notag
    \\
    & \le 
            \| | R_{r} |^{1/2} E P_{r} \|_{2}
          + 
            \delta_{r}
            \| | R_{r} |^{-1/2} \widehat{P}_{r} \|_{2}
    \notag
  \end{align}
  Inserting this into Equation \eqref{eq_Contraction_i_1}, yields
  \begin{align}
            \| | R_{r} |^{-1/2} \widehat{P}_{r} \|_{2} 
    & \le 
            \frac{ \| | R_{r} |^{-1/2} E P_{r} \|_{2} }
                 { 1 - \delta_{r}                       } 
          + 
            \frac{ \delta_{r} \| | R_{r} |^{-1/2} \widehat{P}_{r} \|_{2} }
                 { 1 - \delta_{r}                                           },
  \end{align}
  which gives the result after rearranging.

  For (ii), by Lemma \ref{lem_EigenvalueSeparation},
  \begin{align}
          \| P_{r} \widehat{R}_{r}^{k} \|_{2} 
    & = 
          \sqrt{ \sum_{ s \ne r } \sum_{ j \in I_{s} } 
                 ( \widehat{ \lambda }_{j} - \mu_{r} )^{-2 k} 
                 \| P_{r} \widehat{u}_{j} \otimes \widehat{u}_{j} \|_{2}^{2}
          } 
    \\
    & \le 
          ( 1 - \delta_{r} )^{-k} g_{r}^{-k} 
          \| ( I - P_{r} ) \widehat{P}_{r} \|_{2}
    \le 
          ( 1 - \delta_{r} )^{ - k } g_{r}^{-k - 1/2} 
          \| | R_{r} |^{-1/2} \widehat{P}_{r} \|_{2}.
    \notag
  \end{align}
  By (i), we then obtain
  \begin{align}
           \| P_{r} \widehat{R}_{r}^{k} \|_{2} 
    & \le 
           g_{r}^{- k - 1/2} 
           \| | R_{r} |^{1/2} E P_{r} \|_{2} 
           \frac{ ( 1 - \delta_{r} )^{-k} }{ 1 - 2 \delta_{r} }.
  \end{align}

  Finally, for (iii), by Equation \eqref{eq_RKLargerZeroExpanded_2} with 
  \( k = 1 \),
  \begin{align}
            \| | R_{r} |^{-1/2} \widehat{R}_{r} \|_{\infty}
    & = 
            \| | R_{r} |^{-1/2} 
               ( R_{r} ( I - \widehat{P}_{r} ) - R_{r} E \widehat{R}_{r} )
            \|_{\infty} 
    \\
    & \le
            \| | R_{r} |^{1/2} ( I - \widehat{P}_{r} ) \|_{\infty} 
          + 
            \| | R_{r} |^{1/2} E ( P_{r} + I - P_{r} ) \widehat{R}_{r}
            \|_{\infty} 
    \notag
    \\
    & \le 
            g_{r}^{-1/2} 
          + 
            \| | R_{r} |^{1/2} E P_{r} \widehat{R}_{r} \|_{\infty}
          + 
            \| | R_{r} |^{1/2} E | R_{r} |^{1/2} | R_{r} |^{-1/2} \widehat{R}_{r} 
            \|_{\infty}
    \notag
    \\
    & \le 
            g_{r}^{-1/2} 
          + 
            \| | R_{r} |^{1/2} E P_{r} \|_{\infty}
            g_{r}^{-1} ( 1 - \delta_{r} )^{-1} 
          + 
            \delta_{r}' \| | R_{r} |^{-1/2} \widehat{R}_{r} \|_{\infty}, 
    \notag
    \\
    & \le 
            g_{r}^{-1/2} 
          + 
            g_{r}^{-1/2} 
            \frac{ \delta_{r}' }{ 1 - \delta_{r} }
          + 
            \delta_{r}' \| | R_{r} |^{-1/2} \widehat{R}_{r} \|_{\infty},
    \notag
  \end{align}
  where we have used Lemma \ref{lem_EigenvalueSeparation} for the third inequality.
  Consequently,
  \begin{align}
          \| | R_{r} |^{-1/2} \widehat{R}_{r} \|_{\infty}
    & \le 
          \frac{ g_{r}^{-1/2} }{ ( 1 - \delta_{r} )^{2} } 
  \end{align}
  and for \( k > 1 \), by Lemma \ref{lem_EigenvalueSeparation},
  \begin{align}
           \| | R_{r} |^{-1/2} \widehat{R}_{r}^{k} \|_{ \infty } 
    & \le 
           \| | R_{r} |^{-1/2} \widehat{R}_{r} \|_{ \infty } 
           \| \widehat{R}_{r}^{ k - 1 } \|_{ \infty }
    \\
    & \le 
           \frac{ g_{r}^{-1/2} ( ( 1 - \delta_{r} ) g_{r} )^{ -(k - 1) } }
                { ( 1 - \delta_{r} )^{2}               } 
    = 
           g_{r}^{ - k + 1/2 } ( 1 - \delta_{r} )^{ -(k + 1) },
    \notag
  \end{align}
  which finishes the proof.
\end{proof}


\subsection{Derivation of the perturbation series}
\label{ssec_DerivationOfThePerturbationSeries}

The set of Lemmas from the last section, now allows to prove the result for the perturbation series of the projectors.

\begin{proof}[\normalfont \textbf{Proof of Theorem \ref{thm_ProjectorPerturbationSeries}} (Projector perturbation series)]
  \label{prf_ProjectorPerturbationSeries} 
  The absolute convergence follows by applying Lemma \ref{lem_ElementaryNormEstimates} (ii) directly.
  From Lemma \ref{lem_TaylorRemainder}, it now follows that 
  \begin{align}
    \label{eq_TaylorRemainderEstimate}
          \| \widehat{P}_{r} - \sum_{ n = 0 }^{ p - 1 }  P_{r}^{ (n) } \|_{2} 
    & \le 
          \sum_{ k \in \mathbb{Z} } 
          \sum_{ \substack{ k_{1}, \dots, k_{p} \ge 0 \\
                            k_{1} + \dots + k_{p} = p - k 
                          }
               } 
          \| R_{r}^{ ( k_{1} ) } E \cdots E R_{r}^{ ( k_{p} ) } 
             E \widehat{R}_{r}^{ (k) } 
          \|_{2}.
  \end{align}
  For the summands on the right-hand side of Equation
  \eqref{eq_TaylorRemainderEstimate}, we distinguish the cases \( k \le 0 \)
  and \( k \ge 1 \). 

  First, consider the case $k\leq 0$. By Lemma \ref{lem_EigenvalueSeparation},
  we have
  \begin{align}
    & \ \ \ \
         \| R_{r}^{ ( k_{1} ) } E \cdots E R_{r}^{ ( k_{p} ) } 
                                         E \widehat{R}_{r}^{ (k) }
         \|_{2}^2 
    = 
         \sum_{ j \in I_{r} } 
         ( \widehat{ \lambda }_{j} - \mu_{r} )^{-2k}
         \| R_{r}^{ ( k_{1} ) } E \cdots E R_{r}^{ ( k_{p} ) } E 
                 ( \widehat{u}_{j} \otimes \widehat{u}_{j} )
         \|_{2}^2
   \\
   & \le
         \max_{ j \in I_{r}} | \widehat{ \lambda }_{j} - \mu_{r} |^{-2k}
         \| R_{r}^{ ( k_{1} ) } E \cdots E R_{r}^{ ( k_{p} ) } E \widehat{P}_{r}  
         \|_{2}^2
     \le 
         \delta_r^{-2k} g_r^{-2k}
         \| R_{r}^{ ( k_{1} ) } E \cdots E R_{r}^{ ( k_{p} ) } E \widehat{P}_{r}  
         \|_{2}^2.
  \end{align}
  Thus, by the identiy \( I - P_{r} = | R_{r} |^{1/2} | R_{r} |^{-1/2} \) and
  simple properties of the Hilbert-Schmidt norm, we get
  \begin{align}
    \label{eq_KLess0_1}
    & \ \ \ \
            \| R_{r}^{ ( k_{1} ) } E \cdots E R_{r}^{ ( k_{p} ) } 
                                            E \widehat{R}_{r}^{ (k) }
            \|_{2}
    \\
    & \le 
            \delta_r^{-k}g_r^{-k} 
            \| R_{r}^{ ( k_{1} ) } E \cdots E R_{r}^{ ( k_{p} ) } E 
               P_{r} \widehat{P}_{r}
            \|_{2} \notag
        + 
            \delta_r^{-k}g_r^{-k}\| R_{r}^{ ( k_{1} ) } E \cdots E R_{r}^{ ( k_{p} ) } E 
               (I - P_{r}) \widehat{P}_{r} 
            \|_{2}
    \notag
    \\
    & \le   
            \delta_r^{-k}g_r^{-k} 
            \| R_{r}^{ ( k_{1} ) } E \cdots E R_{r}^{ ( k_{p} ) } E P_{r}
            \|_{2}
         +  
            \delta_r^{-k} g_r^{-k} 
            \|   R_{r}^{ ( k_{1} ) } E \cdots E R_{r}^{ ( k_{p} ) } E 
               | R_{r} |^{1/2}
            \|_{\infty} 
            \| | R_{r} |^{-1/2} \widehat{P}_{r} \|_{2}.
    \notag
  \end{align}
  Using the norm estimates from Lemma \ref{lem_ElementaryNormEstimates}, we can
  now treat the last two terms separately.
  For the first term, Lemma \ref{lem_ElementaryNormEstimates}(ii) yields
  \begin{align}
    \label{eq_KLess0_2}
      \| R_{r}^{ ( k_{1} ) } E \cdots E R_{r}^{ ( k_{p} ) } E P_{r} \|_{2}
    \le 
          g_{r}^{k - 1/2} 
          \| | R_{r} |^{1/2} E P_{r} \|_{2} \delta_{r}'^{p - 1}. 
  \end{align}
  For the second term, we use Lemma \ref{lem_ElementaryNormEstimates}(i) together
  with the contraction result from Lemma \ref{lem_Contraction} (i) to obtain
  \begin{align}
    \label{eq_KLess0_3} 
          \| R_{r}^{ ( k_{1} ) } E \cdots E R_{r}^{ ( k_{p} ) } E 
             | R_{r} |^{1/2}
          \|_{\infty} 
          \|  | R_{r} |^{-1/2} \widehat{P}_{r} \|_{2}
    \le 
          g_{r}^{k - 1/2} 
          \| | R_{r} |^{1/2} E P_{r} \|_{2} 
          \frac{ \delta_{r}'^{p} }{ 1 - 2 \delta_{r} }.
  \end{align}
  Together, Equations \eqref{eq_KLess0_1},
  \eqref{eq_KLess0_2}, \eqref{eq_KLess0_3} and the inequality $\delta_r'\leq \delta_r$ yield
  \begin{align}
        \| R_{r}^{ ( k_{1} ) } E \cdots E R_{r}^{ ( k_{p} ) } 
           E \widehat{R}_{r}^{ (k) }
        \|_{2} 
    & \le 
        g_{r}^{ -1/2 } \| | R_{r} |^{1/2} E P_{r} \|_{2} 
        \Big( \delta_{r}'^{ p - 1 } \delta_{r}^{ - k } 
            + 
              \frac{ \delta_{r}'^{p} \delta_{r}^{-k} }{ 1 - 2 \delta_{r} }
        \Big) 
        \notag
    \\
    & \leq  
        g_{r}^{ -1/2 } \| | R_{r} |^{1/2} E P_{r} \|_{2} 
        \frac{ \delta_{r}'^{p - 1} \delta_{r}^{-k} }{ 1 - 2 \delta_{r} }.
    \notag
  \end{align}
  From there it follows that
  \begin{align}
    \label{eq_KLess0_4}
    & \ \ \ \
            \sum_{k \le 0} 
            \sum_{ \substack{ k_{1}, \dots, k_{p} \ge 0 \\
                              k_{1} + \dots + k_{p} = p - k 
                            }
                 } 
            \| R_{r}^{ ( k_{1} ) } E \dots E R_{r}^{ ( k_{p} ) } 
               E \widehat{R}_{r}^{ (k) } 
            \|_{2} 
    \\
    & \le 
            g_{r}^{-1/2} \| | R_{r} |^{1/2} E P_{r} \|_{2} 
            \frac{ \delta_{r}'^{p - 1}}{ 1 - 2 \delta_{r} }
            \sum_{k \le 0} 
            2^{2 p - 1 - k} \delta_{r}^{-k}
    \notag 
    \le 
            2 g_{r}^{-1/2} \| | R_{r} |^{1/2} E P_{r} \|_{2} 
            \frac{ ( 4 \delta_{r}' )^{p - 1} }{ ( 1 - 2 \delta_{r} )^{2} }
    \notag
  \end{align}
  by bounding the number of summands via Lemma \ref{lem_NumberOfTuples}.

  Next, consider the case $k\geq 1$. Then we have
  \begin{align}
    \label{eq_KGreater1_1}
    & \ \ \ \
            \| R_{r}^{ ( k_{1} ) } E \cdots E R_{r}^{ ( k_{p} ) }
               E \widehat{R}_{r}^{ (k) } 
            \|_{2} 
    \\
    & \le 
            \| R_{r}^{ ( k_{1} ) } E \cdots E R_{r}^{ ( k_{p} ) } E P_{r} \|_{\infty} 
            \| P_{r} \widehat{R}_{r}^{k} \|_{2}
          + 
            \| R_{r}^{ ( k_{1} ) } E \cdots E R_{r}^{ ( k_{p} ) } E | R_{r} |^{1/2}
            \|_{2} 
            \| | R_{r} |^{-1/2} \widehat{R}_{r}^{k} \|_{\infty}.
    \notag
  \end{align}
  Lemma \ref{lem_ElementaryNormEstimates}(i) immediately gives
  \begin{align}
    \label{eq_KGreater1_2}
        \| R_{r}^{ ( k_{1} ) } E \cdots E R_{r}^{ ( k_{p} ) } E P_{r} \|_{\infty} 
    \le g_{r}^{k} \delta_{r}'^{p} 
  \end{align}
  and since \( 1 \le k = p - k_{1} - \dots - k_{p} \) implies that there exists some \( k_{a} = 0 \), \( 1 \le a \le p \), Lemma \ref{lem_ElementaryNormEstimates}(ii) gives 
  \begin{align}
    \label{eq_KGreater1_3}
        \| R_{r}^{ ( k_{1} ) } E \cdots E R_{r}^{ ( k_{p} ) } E | R_{r} |^{1/2}
        \|_{2} 
    \le 
        g_{r}^{k - 1} \| | R_{r} |^{1/2} E P_{r} \|_{2} \delta_{r}'^{p - 1}.
  \end{align}
  The two remaining terms can directly be treated by Lemma \ref{lem_Contraction}
  (ii) and (iii).
  Together we obtain that for \( k \ge 1 \), 
  \begin{align}
    &
    \| R_{r}^{ ( k_{1} ) } E \cdots E R_{r}^{ ( k_{p} ) }
       E \widehat{R}_{r}^{ (k) } 
    \|_{2} 
    \\
    & \le 
      g_{r}^{-1/2} 
      \| | R_{r} |^{1/2} E P_{r} \|_{2} 
      \frac{ \delta_{r}'^{p} ( 1 - \delta_{r} )^{-k} }{ 1 - 2 \delta_{r} } 
      + 
      g_{r}^{-1/2} 
      \| | R_{r} |^{1/2} E P_{r} \|_{2} 
      \delta_{r}'^{p - 1} 
      ( 1 - \delta_{r} )^{ - (k + 1) } 
    \notag 
    \\
    & \le 
    g_{r}^{-1/2} 
    \| | R_{r} |^{1/2} E P_{r} \|_{2} 
    \frac{ \delta_{r}'^{p-1} ( 1 - \delta_{r} )^{-k} }{ 1 - 2 \delta_{r} }. 
    \notag
  \end{align}
  From there, it follows that
  \begin{align}
    \label{eq_KGreater1_4}
    & \ \ \ \
    \sum_{ k \ge 1 } 
    \sum_{ \substack{ k_{1}, \dots, k_{p} \ge 0 \\
                      k_{1} + \dots + k_{p} = p - k 
                    }
         } 
    \| R_{r}^{ ( k_{1} ) } E \cdots E R_{r}^{ ( k_{p} ) }
       E \widehat{R}_{r}^{ (k) } 
    \|_{2} 
    \\
    & \le 
    g_{r}^{-1/2} 
    \| | R_{r} |^{1/2} E P_{r} \|_{2} 
    \frac{ \delta_{r}'^{ p - 1 } }{ 1 - 2 \delta_{r} } 
    \sum_{ k = 1 }^{ \infty }
    2^{ 2 p - 1 - k } ( 1 - \delta_{r} )^{ - k } 
    \le 
    2g_{r}^{-1/2} 
    \| | R_{r} |^{1/2} E P_{r} \|_{2} 
    \frac{ ( 4 \delta_{r}' )^{ p - 1 } }{ 1 - 2 \delta_{r} } 
    \notag
  \end{align}
  by bounding the number of summands again via Lemma \ref{lem_NumberOfTuples}.
  Combining Equation \eqref{eq_KLess0_4} and \eqref{eq_KGreater1_4} finishes the
  proof. 
\end{proof}

Based on the projector series, we can now derive the series expansion for the eigenvalues and the corresponding perturbation result.

\begin{proof} 
  [\normalfont \textbf{Proof of Theorem \ref{thm_EigenvaluePerturbationSeries}} (Eigenvalue perturbation series)]
  \label{prf_EigenvaluePerturbationSeries} 
  If \( \delta_{r}' < 1/4 \), then Proposition \ref{thm_ProjectorPerturbationSeries} guarantees an absolutely converging perturbation series for spectral projectors.
  This yields the following decomposition
  \begin{align}
                          (\widehat{\Sigma} - \mu_{r}) \widehat{P}_{r}  
    & = 
          \widehat{P}_{r} (\widehat{\Sigma} - \mu_{r}) \widehat{P}_{r}  
      = 
          \widehat{P}_{r} E \widehat{P}_{r} + \widehat{P}_{r} ( \Sigma - \mu_{r} ) \widehat{P}_{r}
    \\
    & =
          \sum_{n = 0}^{\infty} \sum_{l = 0}^{n} P_{r}^{(l)} E                P_{r}^{(n - l)} 
        + \sum_{n = 0}^{\infty} \sum_{l = 0}^{n} P_{r}^{(l)} (\Sigma - \mu_r) P_{r}^{(n - l)} 
    \notag
    \\
    & = 
          \sum_{n = 1}^{p}          Q_{r}^{(n)} 
        + \sum_{n = p    }^{\infty} \sum_{l = 0}^{n} P_{r}^{(l)} E                P_{r}^{(n - l)}
        + \sum_{n = p + 1}^{\infty} \sum_{l = 0}^{n} P_{r}^{(l)} (\Sigma - \mu_r) P_{r}^{(n - l)}.
    \notag
  \end{align}
  Hence,
  \begin{align}
    & \ \ \ \
          \| (\widehat{\Sigma} - \mu_r) \widehat{P}_{r} - \sum_{n = 0}^{p} Q_{r}^{(n)} \|_{2}
    \\
    & \le 
          \sum_{n = p}^{\infty} \sum_{l=0}^{n} 
          \sum_{ \substack{k_{1}, \dots, k_{l + 1} \ge 0 \\
                           k_{1} + \dots + k_{l + 1} = l
                          }
               }
          \sum_{ \substack{k_{l + 2}, \dots,  k_{n + 2} \ge 0 \\
                           k_{l +2} + \dots + k_{n + 2} = n - l 
                          }
               } 
          \| R_{r}^{(k_{1})} E \cdots E R_{r}^{(k_{n + 2})} \|_{2}
    \notag
    \\
    & +   \sum_{n = p + 1}^{\infty} \sum_{l = 1}^{n} 
          \sum_{ \substack{k_{1},  \dots,  k_{l + 1} \ge 0 \\
                           k_{1} + \dots + k_{l + 1} = l \\ 
                           k_{l + 1} \ge 1
                          }
               }
          \sum_{ \substack{k_{l + 2},  \dots,  k_{n + 2} \ge 0 \\
                           k_{l + 2} + \dots + k_{n + 2} = n - l \\
                           k_{l + 2} \ge 1
                          }
               } 
         \| R_{r}^{(k_{1})} E \cdots E R_{r}^{(k_{l + 1} + k_{l + 2} - 1)} 
                            E \cdots E R_{r}^{(k_{n + 2})} \|_{2}.
    \notag             
  \end{align}
  Inserting the bounds from Lemmas \ref{lem_ElementaryNormEstimates} and \ref{lem_NumberOfTuples} to
  estimate and enumerate the summands, we arrive at
  \begin{align}
    & \ \ \ \
          \| (\widehat{\Sigma} - \mu_{r}) \widehat{P}_{r} - \sum_{n = 0}^{p - 1} Q_{r}^{(n)}\|_{2}
    \\
    & \le 4 \max \big( \| |R_{r}|^{1/2} E P_r \|_{2}, g_r^{-1/2} \| P_r E P_r \|_{2} \big) \| 
            |R_{r}|^{1/2} E P_r \|_{\infty}
            \sum_{n = p}^{\infty} (n + 1) (4 \delta_{r}')^{n - 1}
    \notag 
    \\
    & +   4 \| |R_{r}|^{1/2} E P_r \|_{2} \| |R_{r}|^{1/2} E P_r \|_{\infty}
            \sum_{n = p + 1}^{\infty} n (4 \delta_{r}')^{n - 2}.
    \notag 
  \end{align}
  Inserting Lemma \ref{lem_DerivativeOfAGeometricSeries} and the inequality $ \| |R_{r}|^{1/2} E P_r \|_{\infty}\leq g_r^{1/2}\delta_r^\prime$ yields the claim.
\end{proof}



\section{Addtional proofs of auxiliary results}
\label{sec_AddtionalProofsOfAuxiliaryResults}

\begin{lemma}[\( L_{k} \) is positive]
  \label{lem_LKIsPositive}
  Under Assumptions \ref{ass_data}, \ref{ass_Kernel} and \ref{ass_Mercer}, the kernel integral operator \( L_{k} \) is positive, i.e., for any \( f \in L^{2}(\mathbb{P}^{X}) \), we have that \( \langle f, L_{k} f \rangle \ge 0 \).
\end{lemma}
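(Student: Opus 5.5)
The plan is to reduce positivity of $L_k$ to the positive semidefiniteness of $k$ in Assumption \ref{ass_Kernel}, by approximating the double integral $\langle f, L_k f\rangle = \iint k(x,y) f(x) f(y)\,\mathbb{P}^X(dx)\,\mathbb{P}^X(dy)$ with empirical double sums. These empirical sums are nonnegative by assumption, and we pass to the limit with the strong law of large numbers for $U$-/$V$-statistics.

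First, we reduce to bounded $f$. The bilinear form $(f,g)\mapsto\langle f, L_k g\rangle$ is continuous on $L^2(\mathbb{P}^X)$, since $k\in L^2(\mathbb{P}^X\otimes\mathbb{P}^X)$ makes $L_k$ bounded (Hilbert--Schmidt). Bounded functions are dense in $L^2$, for instance the truncations $f\mathbf{1}_{\{|f|\le M\}}$. Hence it suffices to prove $\langle f, L_k f\rangle\ge 0$ for bounded measurable $f$.

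For bounded $f$, let $X_1,X_2,\dots$ be i.i.d.\ with law $\mathbb{P}^X$. Assumption \ref{ass_Kernel} with $a_i=f(X_i)$ gives
\begin{align}
  0\le \frac{1}{n^2}\sum_{i,j=1}^n k(X_i,X_j)f(X_i)f(X_j)
  =\frac{1}{n^2}\sum_{i=1}^n k(X_i,X_i)f(X_i)^2+\frac{1}{n^2}\sum_{i\ne j}k(X_i,X_j)f(X_i)f(X_j).
\end{align}
The diagonal term is bounded by $\|f\|_\infty^2\, n^{-1}\cdot n^{-1}\sum_i k(X_i,X_i)$. By the strong LLN and \eqref{Eq_Mercer}, this tends to $0$ almost surely. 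The off-diagonal term equals $\frac{n-1}{n}$ times a $U$-statistic with kernel $h(x,y)=k(x,y)f(x)f(y)$. This kernel is integrable, since $|h|\le \|f\|_\infty^2 k(x,x)^{1/2}k(y,y)^{1/2}$ and $\mathbb{E}\,k(X,X)^{1/2}<\infty$. Hoeffding's strong law for $U$-statistics therefore gives almost sure convergence to $\mathbb{E}\,h(X_1,X_2)=\langle f,L_kf\rangle$. Taking limits yields $\langle f,L_kf\rangle\ge0$.

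The only delicate point is the $U$-statistic limit. If one prefers to avoid invoking the $U$-statistic SLLN, convergence in $L^1$ or in probability is enough. One route is to take expectations directly: $\mathbb{E}$ of the off-diagonal sum equals $\frac{n(n-1)}{n^2}\langle f,L_kf\rangle$ exactly, and the expectation of the diagonal term is $O(1/n)$. This gives $0\le \frac{n-1}{n}\langle f,L_kf\rangle+O(1/n)$, and letting $n\to\infty$ finishes the argument. I would use this expectation route because it is fully elementary, needing only Fubini and integrability of $h$.
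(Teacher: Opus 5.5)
Your proof is correct, and it takes a genuinely different route from the paper. The paper approximates $f$ by simple functions $f_n=\sum_i a_i\mathbf{1}_{A_i}$. It then rewrites $\langle f_n,L_kf_n\rangle$ as $\|\sum_i a_i\int\mathbf{1}_{A_i}(x)k(x,\cdot)\,\mathbb{P}^X(dx)\|_{\mathcal{H}}^2$. In other words, it lifts the quadratic form into the RKHS using Bochner integrals of the feature map $x\mapsto k(x,\cdot)$ and the identity $\langle k(x,\cdot),k(y,\cdot)\rangle_{\mathcal{H}}=k(x,y)$.

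You never leave $L^2(\mathbb{P}^X)$. You integrate the finite Gram inequality against $(\mathbb{P}^X)^{\otimes n}$ and use $k(x,y)^2\le k(x,x)k(y,y)$ together with Fubini to identify the off-diagonal part with $\frac{n-1}{n}\langle f,L_kf\rangle$. The diagonal part is $O(1/n)$ by Mercer's condition once $f$ is bounded, so it disappears in the limit.

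What your route buys is that it is self-contained and uses only the finite-dimensional positive semidefiniteness of $k$. In particular it does not need $\mathcal{H}$. In the paper's logical order, $\mathcal{H}$ is built from the spectral decomposition of $L_k$ with positive eigenvalues, so the paper's argument implicitly relies on an independent (Moore--Aronszajn) construction of the RKHS to avoid circularity.

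What the paper's route buys is an exact representation of the quadratic form as a squared norm, with no limiting argument in $n$. That representation in fact holds directly for every $f\in L^2(\mathbb{P}^X)$, because $\int|f(x)|\,\|k(x,\cdot)\|_{\mathcal{H}}\,\mathbb{P}^X(dx)\le\|f\|_{L^2}\big(\int k(x,x)\,\mathbb{P}^X(dx)\big)^{1/2}<\infty$. Your argument, by contrast, genuinely needs the truncation step, since $\mathbb{E}[k(X,X)f(X)^2]$ can be infinite for unbounded $f$.

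Both of your limiting arguments (Hoeffding's strong law for $U$-statistics and the direct expectation computation) are valid. The expectation route is the cleaner choice, since it needs only Fubini and integrability of $k(x,y)f(x)f(y)$.
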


\begin{proof}[Proof]
  For any \( f \in L^{2}(\mathbb{P}^{X}) \), there exists a sequence of simple functions \( f_{n} = \sum_{i = 1}^{n} a_{i} \mathbf{1}_{A_{i}} \) such that \( f_{n} \to f \) in \( L^{2}(\mathbb{P}^{X}) \).
  For any \( \varepsilon > 0 \) and \( n \) sufficiently large, we then have
  \begin{align}
    \langle f, L_{k} f \rangle 
    & \ge 
    \langle f_{n}, L_{k} f_{n} \rangle - \varepsilon 
    = 
    \sum_{i, j = 1}^{n}
    a_{i} a_{j}
    \int_{A_{i}} \int_{A_{j}} k(x, y) \, \mathbb{P}^{X}(d x) \mathbb{P}^{X}(d y) 
    -
    \varepsilon.
    \\
    & = 
    \sum_{i, j = 1}^{n} a_{i} a_{j} \Big\langle \int_{\mathcal{X}} \mathbf{1}_{A_{i}}(x) k(x,\cdot) \mathbb{P}^{X}(d x),
            \int_{\mathcal{X}} \mathbf{1}_{A_{j}}(x) k(x,\cdot) \mathbb{P}^{X}(d x)  
    \Big\rangle_{\mathcal{H}}  - \varepsilon\\
    &=\Big\|\sum_{i}^{n} a_{i}\int_{\mathcal{X}} \mathbf{1}_{A_{j}}(x) k(x,\cdot) \mathbb{P}^{X}(d x)\Big\|_{\mathcal{H}}^2  - \varepsilon
    \ge 
    - \varepsilon, 
    \notag
  \end{align}
  Since $\varepsilon$ was arbitrary, the claim follows. Note that by the Riesz representation theorem, the integrals on the right-hand side are well defined as objects in \( \mathcal{H} \) due to the fact that \( \int_{\mathcal{X}} \| k(x,\cdot) \|_{\mathcal{H}} \mathbb{P}^{X}(d x) < \infty \), which follows from Mercer's condition.
\end{proof}

\begin{proof}[\normalfont \textbf{Proof of Theorem \ref{thm_BoundsForEigenvaluesAndEigenspaces}} (Bounds for eigenvalues and eigenspaces)]
  \label{prf_BoundsForEigenvaluesAndEigenspaces}
  From Proposition \ref{prp_NonAsymptoticBoundOnDelta_r}, it follows that \( \delta_{r}' < 1/4 \) with probability at least \( 1 - 4 \mathbf{r}_{r}(\Sigma) n^{-\gamma} \).
  On this event, Corollary \ref{cor_EigenvaluePerturbationBound} yields the bound
  \begin{align}
          \|(\widehat\lambda_j-\mu_r)_{j\in I_r}\|_{2}
    \leq
          C_{\varepsilon} ( \| P_{r} E P_{r} \|_{2} + \| |R_{r}|^{1/2} E P_{r} \|_{2} 
                                                      \| |R_{r}|^{1/2} E P_{r} \|_{\infty})
  \end{align}
  The eigenvalue statement now follows noting that Proposition \ref{prp_BoundsOnTheLinearAndQuadraticTerm} implies that for any \(\gamma > 0 \), 
  \begin{align}
    \| P_{r} E P_{r} \|_{2}^2 & \le C L^2\gamma m_r^2 \mu_r^2 \frac{\log n}{n},
    \\
    \| P_{r} E | R_{r} |^{1/2} \|_{2}^2 & \le  C L^2\gamma m_r \mu_r \sum_{s\neq r}\frac{m_s\mu_s}{|\mu_r-\mu_s|}\frac{\log n}{n} 
    \notag
  \end{align}
  and  \( \delta_{r}^\prime \leq \delta_{r} \le \sqrt{\gamma L \mu_{r} / g_{r} \mathbf{ r }_{r}(\Sigma) \frac{\log n}{n}} \) on an event with probability at least \( 1 - n^{-\gamma} \) by Proposition \ref{prp_NonAsymptoticBoundOnDelta_r}.

  The eigenfunction statement follows analogously combining Proposition \ref{prp_DeterministicBounds}, Corollary \ref{cor_ProjectorPerturbationBound} and the same probabilistic estimates.
\end{proof}

\begin{proof}[\normalfont \textbf{Proof of Proposition \ref{prp_DeterministicBounds}} (Deterministic bounds)]
  \label{prf_DeterminisicBounds}
  From Lemma \ref{lem_L_Sigma_empirical} it follows that for any \( O \in O(|I_{r}|) \), 
  \begin{align} \label{eq_EigenfunctionComparison_1}
    & \ \ \ \
          \|  \big( \widehat{v}_{j}           \big)_{j \in I_{r}} 
            - \big( S_{n} \phi_{j} / \sqrt{n} \big)_{j \in I_{r}} O 
          \|_{2}
          \\
    &  =
          \| \big( \widehat{v}_{j}              \big)_{j \in I_{r}}
           - \big( (n \mu_r)^{-1/2} S_{n} u_{j} \big)_{j \in I_{r}} O
          \|_{2}
    \notag\\
   & \le
          \Big\|   \big( \widehat{v}_{j} \big)_{j \in I_{r}} 
                 - \Big( \Big( \frac{\widehat{\lambda}_j}{\mu_r} \Big)^{1/2} \widehat{v}_{j} 
                   \Big)_{j \in I_{r}} 
          \Big\|_{2}
        +
          \Big\|   \Big( \Big( \frac{\widehat{\lambda}_j}{\mu_r}\Big)^{1/2}\widehat{v}_{j} 
                   \Big)_{j \in I_{r}}  
                -  \big( (n\mu_r)^{-1/2}S_{n} u_{j} \big)_{j \in I_{r}} O
          \Big\|_{2}
    \notag
    \\
    & = 
          \sqrt{\sum_{j \in I_r} \Big( 1 - \Big(\frac{\widehat{\lambda}_j}{\mu_r} \Big)^{1/2} \Big)^2}
        +
          \big\| (n \mu_r)^{-1/2} 
                 \big( (S_{n} \widehat{u}_j)_{j \in I_{r}} - (S_{n} u_{j})_{j \in I_{r}} O \big) 
          \big\|_{2}
    \notag
    \\
    & \le 
           \sqrt{\sum_{j \in I_{}} \Big(1- \frac{\widehat{\lambda}_{j}}{\mu_{r}}  \Big)^{2} }
          + 
          \mu_{r}^{-1/2}
          \| n^{-1/2} \big( (S_{n} \widehat{u}_j)_{j \in I_{r}} - (S_{n} u_{j})_{j \in I_{r}} O \big) \|_{2}
    \notag
  \end{align}
  where for last inequality, we have used that \( (1 - x) = (1 - \sqrt{x}) (1 + \sqrt{x}) \), \( x > 0 \).

  We abbreviate \( (u_{j})_{j \in I_{r}} \) by the operator $U_r: \mathbb{R}^{I_{r}} \to \mathcal{H} $, \( a \mapsto \sum_{j \in I_r} a_{j} u_{j} \) and \( (\widehat{u}_{j})_{j \in I_{r}} \) analogously by $ \widehat U_r $. 
  For the second term, we can the further estimate
  \begin{align}
    & 
    \| n^{-1/2} \big( (S_{n} \widehat{u}_j)_{j \in I_{r}} - (S_{n} u_{j})_{j \in I_{r}} O \big) \|_{2}^2
    \\
    &= 
    \| n^{-1/2} S_{n} (\widehat{U} - U O) \|_{2}^2
    \notag
    \\
    & = 
    n^{-1} \tr \big( \widehat U_r - U_r O \big)^{*} S_{n}^{*} S_{n} \big( \widehat U_r - U_r O \big)
    \notag
    \\
    &= 
           \tr \big( \widehat U_r - U_r O \big)^{*} \widehat{\Sigma} \big( \widehat U_r - U_r O \big)
    \notag
    \\
    & = 
           \tr \big( \widehat U_r - U_r O \big)^{*} \Sigma \big( \widehat U_r - U_r O \big)
    +
           \tr \big( \widehat U_r - U_r O \big)^{*} E \big( \widehat U_r - U_r O \big)
    \notag
    \end{align}
    Note that here, \( S_{n}^{*}: \mathbb{R}^{n}\rightarrow \mathcal{H},  w \mapsto \sum_{i = 1}^{n} w_{i} k(X_{i}, \cdot) \) is the adjoint of \( S_{n} \) as a mapping from \( (\mathbb{R}^{n}, \langle \cdot, \cdot \rangle_{\text{eu}}) \) to \( (\mathcal{H}, \langle \cdot, \cdot \rangle_{\mathcal{H}}) \).

    On the one hand, inserting into the first term that with respect to the Loewner order,
    \begin{align}
      \Sigma = \mu_rI+\sum_{s\neq r}(\mu_s-\mu_r)P_r\preceq \mu_rI+|R_r|^{-1},
    \end{align}
    we obtain that
    \begin{align}
      & \ \ \ \
            \tr (\widehat U_r - U_r O)^{*} \Sigma (\widehat U_r - U_r O)
      \\
      &\le 
            \mu_r\| \widehat U_r- U_r O \|_{2}^{2} + \tr \widehat U_r^{*} |R_r|^{-1} \widehat U_r
      \notag
      \\
      & =
            \mu_r\| \widehat U_r- U_r O \|_{2}^{2}+\tr \big(|R_r|^{-1} \widehat P_r \big)
      \notag
      \\
      &=
            \mu_r\| \widehat U_r- U_r O \|_{2}^{2}+\tr \big(\widehat P_r|R_r|^{-1} \widehat P_r  \big)
      \notag
      \\
      & \le 
            \mu_r\| \widehat U_r- U_r O \|_{2}^{2}+\||R_r|^{-1/2}\widehat P_r\|_2^2.
      \notag
    \end{align}
    By taking the infimum over \( O \in O(| I_{r} |) \), we arrive at
    \begin{align} \label{eq_EigenfunctionComparison_2}
           \tr (\widehat U_r - U_r O)^{*} \Sigma (\widehat U_r - U_r O) 
      \le 
           \mu_r \| \widehat{P}_{r} - P_{r} \|_{2}^{2} + \||R_r|^{-1/2}\widehat P_r\|_2^2.
    \end{align}
    On the other hand, inserting $I=P_r+(I-P_r)$ into the second term, we obtain that
    \begin{align}
      & \ \ \ \
      \tr (\widehat U_r - U_r O)^{*} E (\widehat U_r - U_r O)
      \\
      & =
      \tr (\widehat U_r - U_r O)^{*} P_r E P_r (\widehat U_r - U_r O)
      +
      2\tr (\widehat U_r - U_r O)^{*} P_r E (I - P_r) \widehat U_r
      \notag 
      \\
      \notag
      & \qquad \qquad \qquad \qquad \qquad \qquad \qquad \quad \ \  +
      \tr \widehat U_r^{*} (I - P_r) E (I - P_r) \widehat U_r
      \notag 
      \\
      & = 
      \tr (\widehat U_r - U_r O)^{*} P_r E P_r (\widehat U_r - U_r O )
      +
      2 \tr (\widehat U_r - U_r O)^{*} P_r E |R_r|^{1/2} |R_r|^{-1/2} \widehat P_r \widehat U_r
      \notag 
      \\
      & \qquad \qquad \qquad \qquad \qquad \qquad \qquad \quad \ \  +
      \tr \widehat P_r |R_r|^{-1/2} |R_r|^{1/2} E |R_r|^{1/2} |R_r|^{-1/2} \widehat P_r
      \notag 
      \\
      & \le 
      \| P_r E P_r \|_{\infty} \| \widehat U_r- U_r O \|_{2}^{2}
      +
      2\|\widehat U_r - U_r O \|_2 \| P_r E |R_r|^{1/2} \|_{\infty} \| |R_r|^{-1/2} \widehat P_r \|_2 \|\widehat U_r\|_\infty
      \notag 
      \\
      & \qquad \qquad \qquad \qquad \qquad \qquad +
      \||R_r|^{1/2}E|R_r|^{1/2}\|_{\infty}\||R_r|^{-1/2}\widehat P_r\|_2^2.
      \notag 
    \end{align}
    By taking the infimum over \( O \in O(| I_{r} |) \), we arrive at
    \begin{align}
      &
      \tr (\widehat U_r - U_r O \big)^{*} E (\widehat U_r - U_r O)
      \\
      & \le
      \| P_r E P_r \|_{\infty} \| \widehat{P}_{r} - P_{r} \|_{2}^{2} 
      +
      2\| \widehat{P}_{r} - P_{r} \|_{2} \|P_r E |R_r|^{1/2} \|_{\infty} \||R_r|^{-1/2}\widehat P_r\|_2
      \notag 
      \\
      & \qquad \qquad \qquad \qquad \qquad \ \ \ +
      \| |R_r|^{1/2} E |R_r|^{1/2} \|_{\infty} \| |R_r|^{-1/2} \widehat P_r \|_2^2.
      \notag 
    \end{align}
    Combining the above with the inequality 
    \begin{align}
     \label{eq:bound:somewhere:else:?}
         \|\widehat{P}_{r} - P_{r}\|_2 
     =
         \sqrt{2} \| (I - P_r) \widehat P_r \|_2 
    \le 
         \sqrt{2} g_r^{-1/2} \| |R_r|^{-1/2} \widehat P_r \|_2,
    \end{align}
    we conclude that
    \begin{align}  \label{eq_EigenfunctionComparison_3}
      &
      \tr (\widehat U_r- U_r O)^{*} E (\widehat U_r - U_r O)
      \\
      & \le 
      \big( 2 g_r^{-1} \| P_r E P_r \|_{\infty} + 2 \sqrt{2} g_r^{-1/2} \| P_r E |R_r|^{1/2} \|_{\infty}
                                                + \| |R_r|^{1/2} E |R_r|^{1/2} \|_{\infty} 
      \big)
      \||R_r|^{-1/2}\widehat P_r\|_2^2
      \notag 
      \\
      & \le 
      (2 + 2 \sqrt{2} + 1) \delta_r^\prime \| |R_r|^{-1/2} \widehat P_r \|_2^2 
      \le 
      6 \| |R_r|^{-1/2} \widehat P_r \|_2^2.
      \notag
    \end{align}
    Collecting \eqref{eq_EigenfunctionComparison_1}--\eqref{eq_EigenfunctionComparison_3} and inserting Lemma \ref{lem_Contraction}(a) and Corollary \ref{cor_ProjectorPerturbationBound}, the claim follows.
\end{proof}

\begin{proof}[\normalfont \textbf{Proof of Lemma \ref{lem_RelativeRankForTheHeatKernelInDifferentRegimes}} (Relative rank for the heat kernel)]
  \label{prf_RelativeRankForTheHeatKernelInDifferentRegimes}
  In the proof, we will use the following simple inequalities for the exponential function
  \begin{align}
    \label{eq_ElementaryExponentialInequalities}
    \frac{1}{e^x-1}      \le \frac{1}{x},
    \qquad 
    \frac{1}{1 - e^{-x}} \le  1 + \frac{1}{x},
    \quad \text{and} \quad 
    \frac{1}{e^x - 1}    \le \Big( 1 + \frac{1}{x} \Big) e^{-x}
  \end{align}
  which are valid for all $x \ge 0$ and all follow from the elementary estimate \( 1 + x \le e^{x} \).

  For (i), we only consider the case $ r \ge 2 $, the case $ r = 1 $ is simpler.
  By shifting the multiplicities \( m_{r} = 2 \) into the constnat and the radius \( R = 1 / (2 \pi) \) into the the time variable \( t \), it suffices to bound
  \begin{align}
    & 
    \sum_{s = 1}^{r - 1} \frac{e^{-t (s-1)^2}}{e^{-t (s-1)^2} - e^{-t (r-1)^2}}
    +
    \sum_{s = r + 1}^{\infty} \frac{e^{-t (s-1)^2}}{e^{-t (r-1)^2} - e^{-t(s-1)^2}}
    \\
    & +
    \frac{e^{-t (r-1)^2}}{\min(e^{-t (r-2)^2} - e^{-t(r-1)^2},e^{-t (r-1)^2} - e^{-t r^2})}.
    \notag 
  \end{align}
  We set \( \tilde s: = s - 1 \) and \( \tilde r: = r - 1 \).
  For the first term in the sum, up to a constant, we then obtain the bound
  \begin{align}
    \sum_{s = 1}^{r - 1} \frac{e^{-t \tilde s^2}}{e^{-t \tilde s^2} - e^{-t \tilde r^2}}
    & = 
    \sum_{s = 1}^{r - 1} \frac{1}{1 - e^{-t (\tilde r^2 - \tilde s^{2} )}}
    \le 
    \sum_{s = 1}^{r - 1} \frac{1}{1 - e^{-t (r (r - s))}}
    \\
    & \le 
    \sum_{s = 1}^{r - 1} \Big( 1 + \frac{1}{t r (r - s)} \Big) 
    \le 
    r + \frac{\log(e r)}{t r}.
  \end{align}
  For the second term of the sum, we similarly estimate
  \begin{align}
    \sum_{s = r + 1}^{2 r} \frac{1}{e^{t (\tilde s^{2} - \tilde r^{2})} - 1} 
    \le 
    \sum_{s = r + 1}^{2 r} \frac{1}{t (\tilde s^{2} - \tilde r^{2})} 
    \le 
    \frac{1}{t r} 
    \sum_{s = r + 1}^{2 r} \frac{1}{s - r} 
    \le 
    \frac{\log(e r)}{t r}
  \end{align}
  and 
  \begin{align}
  \sum_{s = 2 r + 1}^{\infty} \frac{e^{- t \tilde s^{2}}}{e^{- t \tilde r^{2}} - e^{- t \tilde s^{2}}} \le C\frac{1}{t r}.
  \end{align}
  Finally, with analogous arguments,
  \begin{align}\label{eq:iv:1:case}
    \frac{e^{- t (r - 1)^{2}}}{e^{- t (r - 2)^{2}} - e^{- t (r - 1)^{2}}} \le \frac{C}{t r}
    \qquad \text{ and } \qquad 
    \frac{e^{- t (r - 1)^{2}}}{e^{- t (r - 1)^{2}} - e^{- t r^{2}}} \le C \Big( 1 + \frac{1}{t r} \Big) .
  \end{align}

  For (ii) and (iii), again, we focus on the case $ r \ge 2 $.
  Throughout the proof $ c, C > 0 $ are constants depending only on $d$ that may change from line to line.
  We have
  \begin{align}
        \mathbf{r}_r(\Sigma) 
    & = \sum_{s < r} \frac{m_s e^{-t \nu_s}}{e^{-t \nu_s} - e^{-t\nu_r}} 
      + \frac{m_r e^{-t \nu_r}}{\min(e^{-t \nu_r} - e^{-t \nu_{r + 1}}, e^{-t \nu_{r - 1}} - e^{-t \nu_r})}
    \\
    & + \sum_{s > r} \frac{m_s e^{-t \nu_s}}{e^{-t \nu_r} - e^{-t \nu_s}}
    \notag
  \end{align}
  Now, the formulas for $ \nu_s $ and $ m_s $ imply that \( m_{s} \le C s^{d - 1} \) and for $s < r$, 
  \begin{align}
    \frac{e^{-t \nu_s}}{e^{-t \nu_s} -e^{-t \nu_r}}
    =
    \frac{1}{1-e^{-t(\nu_r-\nu_s)}} 
    \le
    1 + \frac{1}{t(\nu_r-\nu_s)}
    \le
    C \Big( 1 + \frac{1}{t r (r - s)} \Big).
  \end{align}
  Hence, we get
  \begin{align}
    & \ \ \ \
    \sum_{s < r} \frac{m_s e^{-t \nu_s}}{e^{-t \nu_s} - e^{-t \nu_r}}
    \le
    C \sum_{s < r} \Big( s^{d-1} + \frac{s^{d-1}} {t r (r - s)} \Big)
    \\
    & \le
    C\Big(r^d+\frac{r^{d-2}}{t}\sum_{s<r}\frac{1}{r-s}\Big)
    \le 
    C \Big( r^d + \frac{r^{d - 2} \log(er)}{t} \Big).
    \notag
  \end{align}
  Similarly, we have
  \begin{align}
    \sum_{s = r + 1}^{2 r} \frac{m_s e^{-t \nu_s}}{e^{-t \nu_r} - e^{-t \nu_s}}
    &=
    \sum_{s = r + 1}^{2 r} \frac{m_s}{e^{t (\nu_s - \nu_r)} - 1}
    \le 
    \sum_{s = r + 1}^{2 r} \frac{m_s}{t (\nu_s - \nu_r)}
    \\
    & \le 
    C \sum_{s = r + 1}^{2 r} \frac{s^{d - 1}}{t r (s - r)}
    \le 
    C \frac{r^{d - 2} \log(e r)}{t}.
    \notag 
  \end{align}
  and
  \begin{align}\label{eq:iv:2:case}
    \frac{m_r e^{-t \nu_r}}{\min(e^{-t \nu_r} - e^{-t \nu_{r + 1}}, e^{-t \nu_{r - 1}} - e^{-t \nu_r})} 
    \le 
    C \Big( 1 + \frac{1}{t r} \Big).
  \end{align}
  It remains to bound the sum over $s > 2 r$.
  First note that for $s > 2 r$, the formula for $\nu_s$ implies that \( \nu_s - \nu_r \ge c s^2 \) and
  \begin{align}
    \frac{e^{-t \nu_s}}{e^{-t \nu_r} - e^{-t \nu_s}} 
    =
    \frac{1}{e^{t (\nu_s - \nu_r)} - 1} 
    \le
    C \Big( 1 + \frac{1}{t(\nu_s - \nu_r)} \Big) e^{-t (\nu_s - \nu_r)}
    \le
    C \Big( 1 + \frac{1}{t s^2} \Big) e^{-c t s^2}.
  \end{align}
  Hence, we get
  \begin{align}
    & \ \ \ \
        \sum_{s > 2 r} \frac{m_{s} e^{-t \nu_s}}{e^{-t \nu_r} - e^{-t \nu_s}} 
    \le 
        C \sum_{s > 2 r} \Big( s^{d - 1} e^{-c t s^2} + \frac{s^{d - 3}}{t} e^{-c t s^2} \Big)
    \\
    & \le
        C \int_{2 r}^\infty \Big( x^{d - 1} e^{-c t x^2} + \frac{x^{d - 3}}{t} e^{-c t x^2} \Big) \, dx
    \le
        C \begin{cases}
            t^{-1}     \log_+( t^{-1}), & d = 2, \\ 
            t^{- d/2},              & d \ge 3. 
          \end{cases}
    \notag
  \end{align}
  Combining all of the estimates yields the statements in (ii) and (iii). 

  Finally, for (iv), by elementary calculus, we have $\log(1 - x) \ge -2 x$ for all $x \in [0, 1/2]$, implying that 
  \begin{align}
    \label{eq_TaylorLog}
    \frac{1}{1 - x} \le \exp(2 x) \qquad \text{ for all } x \in [0, 1/2].
  \end{align}
  Moreover, by the negative binomial series, we have 
  \begin{align}
    \frac{1}{(1 - x)^{d + 1}} = \sum_{r = 1}^\infty \binom{d + r - 1}{r - 1} x^{r - 1}
    \qquad \text{for all } x \in [0, 1).
  \end{align}
  Now, we compute
  \begin{align}
        \mathbf{r}_2(\Sigma) 
    & = \frac{1}{1 - e^{-t d / R^2}} 
      + \frac{(d + 1) e^{-t d / R^2}}
             {\min(e^{-t d / R^2} - e^{-2 t (d + 1) / R^2}, 1 - e^{-t d / R^2})}
      + \sum_{s > 2} \frac{m_s e^{-t \nu_s}}{e^{-t d / R^2} - e^{-t \nu_s}} 
    \notag 
    \\
    & \le     1 + \frac{R^2}{t d} 
        + C d \Big( 1 + \frac{R^2}{t d} \Big)
        + C \sum_{s > 2} m_s \Big( 1 + \frac{R^2}{t d (s - 2)} \Big) e^{-(s - 2) \frac{d t}{R^2}} 
    \\
    & \le C \Big( 1 + \frac{R^2}{t d} \Big) 
            \Big( 1 + d + \sum_{s > 2} \binom{d + s - 1}{s - 1} e^{-(s - 2) \frac{d t}{R^2}} \Big)
    \notag 
    \\
    & \le C \Big( 1 + \frac{R^2}{t d} \Big)
            \Big(1 + d + (d + 1) \sum_{s > 2} 
                                 \binom{d + s - 2}{s - 2} e^{-(s - 2) \frac{d t}{R^2}} 
            \Big)
    \notag 
    \\
    & = C \Big( 1 + \frac{R^2}{t d} \Big) 
          \Big( (d + 1) \sum_{s = 2} \binom{d + s - 2}{s - 2} e^{-(s - 2) \frac{d t}{R^2}} \Big)
    \notag 
    \\
    & = C \Big( 1 + \frac{R^2}{t d} \Big)
          \Big( \frac{d + 1}{(1 - e^{-\frac{t d}{R^2}})^{d + 1}} \Big).
    \notag 
  \end{align}
  Applying the assumption $t d / R^2 \ge \log 2$ and the inequality \eqref{eq_TaylorLog}, we conclude that
  \begin{align}
    \mathbf{r}_2(\Sigma) & \le C \Big( 1 + \frac{R^2}{t d} \Big) 
    \Big(\frac{d+1}{(1-e^{-\frac{td}{R^2}})^{d+1}}\Big)
    \leq
    C (d + 1) \exp\Big( 2 (d + 1) e^{-\frac{d t}{R^2}} \Big),
  \end{align}
  which finishes the proof.
\end{proof}

\begin{proof}[\normalfont \textbf{Proof of Lemma \ref{lem_RelativeRankForWaveletPriors}} (Relative rank for wavelet priors)]
  \label{prf_RelativeRankForWaveletPriors}
  For the first statement, we immediately have that for \( r = 0, 1, \dots, R_{\alpha} \), 
  \begin{align}
    \frac{\mu_{r}}{g_{r}} \le \frac{1}{(1 - 2^{-(2 \alpha - 1)}) \land (2^{(2 \alpha - 1)} - 1)}.
  \end{align}

  For the second statement, we write
  \begin{align}
    \mathbf{r}_{r}(\Sigma) 
    & = 
    \sum_{l = 0}^{r - 1} \frac{m_{l} \mu_{l}}{\mu_{l} - \mu_{r}} 
    + 
    \frac{m_{r} \mu_{r}}{g_{r}} 
    + 
    \sum_{l = r + 1}^{R_{\alpha}} \frac{m_{l} \mu_{l}}{\mu_{r} - \mu_{l}} 
    =: 
    (\text{I}) + (\text{II}) + (\text{III})
  \end{align}
  and consider the partial sums individually.
  We immediately have \( (\text{II}) \le C 2^{r} \) from before. 
  The tem \( (\text{I}) \) then satisfies
  \begin{align}
    (\text{I}) 
    & = 
    \sum_{l = 0}^{r - 1} \frac{m_{l} \mu_{l}}{\mu_{l} - \mu_{r}} 
    \le 
    C 2^{r} 
    + 
    \sum_{l = 0}^{j - 1} \frac{2^{l}}{1 - 2^{-(r - l) (2 \alpha - 1 )}} 
    \le 
    C 2^{r}.
  \end{align}
  Finally, the term \( (\text{III}) \) can be estimated via
  \begin{align}
    (\text{III}) & =   \sum_{l = r + 1}^{R_{\alpha}} 
                       \frac{2^{l}}{2^{(l - r) (2 \alpha - 1)} - 1} 
                   \le 
                       C 2^{r} \sum_{l = r + 1}^{R_{\alpha}} 2^{(2 - 2 \alpha) (l - r)} 
    \\
                 & \le 
                       C \begin{cases}
                           2^{(2 - 2 \alpha) R_{\alpha}} 2^{(2 \alpha - 1) r}, & \alpha \in (1/2, 1), \\
                           (R_{\alpha} - r) 2^{j},                             & \alpha = 1, \\
                           2^{r},                                              & \alpha > 1. \\ 
                       \end{cases}
    \notag
  \end{align}
  This finishes the proof.
\end{proof}

\begin{lemma}[Derivative of a geometric series]
  \label{lem_DerivativeOfAGeometricSeries}
  Let $ x \in (0, 1) $ and $ p \in \mathbb{N}_{0} $.
  Then, we have 
  \begin{align}
    \sum_{n = p}^\infty(n + 1) x^n = \frac{(p + 1) x^p}{(1 - x)^2}.
  \end{align}
\end{lemma}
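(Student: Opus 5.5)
The identity is a shifted and differentiated geometric series, so I will obtain it by differentiating the closed form of a geometric tail term by term. One caveat first: as written, the right-hand side $(p+1)x^p/(1-x)^2$ cannot be exact for every $p$. For $p=0$ it gives the correct value $1/(1-x)^2$. For $p=1$, however, the left side is $1/(1-x)^2-1$, while the right side is $2x/(1-x)^2$; these differ, since $2x\neq 1-(1-x)^2=2x-x^2$. I will therefore prove the exact identity and, alongside it, the inequality
\begin{align}
  \sum_{n = p}^\infty(n + 1) x^n \le \frac{(p + 1) x^p}{(1 - x)^2},
\end{align}
which is the form actually needed where the lemma is used, in the proofs of Theorem \ref{thm_EigenvaluePerturbationSeries} and Theorem \ref{thm_ProjectorPerturbationSeries}.

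First, for $x\in(0,1)$ I start from $\sum_{n\ge p}x^{n+1}=x^{p+1}/(1-x)$. Since $|x|<1$, the power series may be differentiated term by term. Doing so gives
\begin{align}
  \sum_{n\ge p}(n+1)x^n=\frac{(p+1)x^p(1-x)+x^{p+1}}{(1-x)^2}=\frac{(p+1)x^p-p\,x^{p+1}}{(1-x)^2}.
\end{align}
This is the exact closed form. It coincides with the stated formula when $p=0$.

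Second, the term $p\,x^{p+1}$ is nonnegative, so dropping it yields the inequality displayed above. Every later invocation only bounds tails of this series from above, with $x=4\delta_r'<1$, so the inequality is sufficient there. As a cross-check, one can also verify the closed form directly: multiply the partial sums by $(1-x)^2$, telescope, and let $N\to\infty$, using $N x^N\to 0$.

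I do not expect any real obstacle. The only point needing care is the discrepancy for $p\ge1$ noted at the start. The lemma should be read, and applied, as the upper bound, or else stated with the exact right-hand side $\frac{(p+1)x^p-p x^{p+1}}{(1-x)^2}$.
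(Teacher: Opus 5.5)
The paper states this lemma without proof, so there is no competing argument to compare with. Your term-by-term differentiation of $\sum_{n\ge p}x^{n+1}=x^{p+1}/(1-x)$ is the argument the lemma's title indicates, and your computation is correct.

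Your objection to the statement is also correct, and it is a genuine error in the paper rather than in your reasoning. For every $p\ge 1$ the stated right-hand side exceeds the true value $\frac{(p+1)x^p-p\,x^{p+1}}{(1-x)^2}$ by $p\,x^{p+1}/(1-x)^2>0$. So the lemma is false as an equality except at $p=0$. What holds in general is the upper bound $\sum_{n\ge p}(n+1)x^n\le (p+1)x^p/(1-x)^2$.

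That bound is all the paper needs. The lemma is invoked only once, at the end of the proof of Theorem \ref{thm_EigenvaluePerturbationSeries}. There, after the shift $m=n-1$ in the second sum, both tails $\sum_{n\ge p}(n+1)(4\delta_r')^{n-1}$ and $\sum_{n\ge p+1}n(4\delta_r')^{n-2}$ are bounded by $(p+1)(4\delta_r')^{p-1}/(1-4\delta_r')^2$. This is exactly the factor appearing in that theorem, so its conclusion is unaffected.

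One small correction to your remarks: the proof of Theorem \ref{thm_ProjectorPerturbationSeries} does not use this lemma. The sums over $k$ there are ordinary geometric series in $2\delta_r$ and in $(2(1-\delta_r))^{-1}$.
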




\textbf{Funding.} 
Co-funded by the European Union (ERC, BigBayesUQ, project number: 101041064).
Views and opinions expressed are however those of the authors only and do not necessarily reflect those of the European Union or the European Research Council.
Neither the European Union nor the granting authority can be held responsible for them.
The research of B.S. has been partially funded by the Deutsche Forschungsgemeinschaft
(DFG) – Project-ID 318763901-SFB1294.

\printbibliography

\end{document}